\documentclass[a4paper,12pt]{amsart}
\usepackage[utf8]{inputenc}
\usepackage[english]{babel}
\usepackage[a4paper,twoside,top=2.5cm, bottom=2cm, left=2cm, right=2cm]{geometry} 
\usepackage{graphicx}
\usepackage{hyperref}
\usepackage{amsmath}
\usepackage{amsthm}
\usepackage{amssymb}
\usepackage{esint} 
\usepackage{mathrsfs} 
\usepackage{xcolor}
\usepackage{array}
\usepackage{hhline}
\usepackage{enumitem} 
\usepackage{imakeidx} 
\usepackage{xparse} 
\usepackage{mathtools}

\usepackage{comment}

\definecolor{newblue}{rgb}{0.2, 0.3, 0.85}
\hypersetup{colorlinks=true, linkcolor=newblue, citecolor=newblue, urlcolor  = newblue} 


\usepackage{fancyhdr} 
\usepackage{ifthen} 
\usepackage{forloop} 
\usepackage{xstring} 
\usepackage{blindtext} 
\usepackage{nomencl}
\usepackage{dsfont} 
\usepackage{faktor} 
\usepackage{tikz}
\usetikzlibrary{arrows}

\numberwithin{equation}{section}

\usepackage[english,capitalize]{cleveref}

\usepackage[maxbibnames=99,backend=biber, sorting=nyt, doi, url=false]{biblatex}
\addbibresource{Bibliography.bib}

\usepackage[colorinlistoftodos,textwidth=2.3cm]{todonotes}

\definecolor{dgreen}{rgb}{0.0, 0.56, 0.0}

\newcommand{\N}{\ensuremath{\mathbb N}}
\newcommand{\R}{\ensuremath{\mathbb R}}

\newcommand{\haus}{{\mathcal H}}

\newcommand{\res}{\mathbin{\vrule height 1.6ex depth 0pt width
0.13ex\vrule height 0.13ex depth 0pt width 1.3ex}} 


\DeclarePairedDelimiter\norm{\lVert}{\rVert}
\DeclarePairedDelimiter\scal{\langle}{\rangle}

\newcommand{\st}{\ensuremath{\ :\ }} 
\newcommand{\eqdef}{\ensuremath{\vcentcolon=}}
\newcommand \eps{\ensuremath{\varepsilon}} 
\renewcommand{\epsilon}{\varepsilon}

\newcommand{\de}{\ensuremath{\,\mathrm d}} 
\renewcommand{\d}{\ensuremath{\mathrm d}} 



\newcommand{\RCD}{\mathsf{RCD}}

\newcommand{\dist}{\ensuremath{\mathrm d}}


\DeclareMathOperator{\tr}{tr}

\newcommand{\sff}{\mbox{\rm II}} 
\newcommand{\ric}{\ensuremath{\mathrm{Ric}}} 
\newcommand{\sect}{\ensuremath{\mathrm{Sect}}} 
\DeclareMathOperator{\vol}{vol}
\DeclareMathOperator{\inj}{inj}



\theoremstyle{plain}
\newtheorem{thm}{Theorem}[section] 
\theoremstyle{plain}
\newtheorem{conj}[thm]{Conjecture}
\theoremstyle{plain}
\newtheorem{openq}[thm]{Open Question}
\theoremstyle{plain}
\newtheorem{prop}[thm]{Proposition}
\theoremstyle{plain}
\newtheorem{lemma}[thm]{Lemma}
\theoremstyle{plain}

\theoremstyle{definition}
\newtheorem{defn}[thm]{Definition} 
\theoremstyle{definition}
\newtheorem{remark}[thm]{Remark}
\theoremstyle{definition}

\theoremstyle{definition}

\title[Uniqueness on average of isoperimetric sets]{Uniqueness on average of large isoperimetric sets in noncompact manifolds with nonnegative Ricci curvature}

\author{Gioacchino Antonelli}
\address{Gioacchino Antonelli
\hfill\break Courant Institute Of Mathematical Sciences (NYU), 251 Mercer Street, 10012, New York, USA}
\email{ga2434@nyu.edu}
\author{Marco Pozzetta}\address{Marco Pozzetta \hfill\break Dipartimento di Matematica e Applicazioni ``Renato Caccioppoli'', Universit\'a degli Studi di Napoli Federico II, via Cintia - Monte Sant'Angelo, 80126 Napoli, Italy}\email{marco.pozzetta@unina.it}
\author{Daniele Semola}\address{Daniele Semola \hfill\break ETH Z\"urich, FIM-Forschungsinstitut f\"ur Mathematik, R\"amistrasse 101, 8092, Z\"urich, Switzerland}\email{daniele.semola.math@gmail.com}

\subjclass{Primary: 53A35, 53C21. Secondary: 49Q10, 49Q20, 49J45, 53C23.}
\keywords{Isoperimetric sets, strict stability, uniqueness, nonnegative Ricci, isoperimetric profile}

\begin{document}

\begin{abstract}
Let $(M^n,g)$ be a complete Riemannian manifold which is not isometric to $\mathbb{R}^n$, has nonnegative Ricci curvature, Euclidean volume growth, and quadratic Riemann curvature decay. We prove that there exists a set $\mathcal{G}\subset (0,\infty)$ with density $1$ at infinity such that for every $V\in \mathcal{G}$ there is a unique isoperimetric set of volume $V$ in $M$; moreover, its boundary is strictly volume preserving stable. 

The latter result cannot be improved to uniqueness or strict stability for every large volume. Indeed, we construct a complete Riemannian surface satisfying the previous assumptions and with the following additional property: there exist arbitrarily large and diverging intervals $I_n\subset (0,\infty)$ such that isoperimetric sets with volumes $V\in I_n$ exist, but they are neither unique nor do they have strictly volume preserving stable boundaries.
\end{abstract}

\maketitle

\tableofcontents

\section{Introduction}

In this paper we study the isoperimetric structure for large volumes of complete Riemannian manifolds $(M^n,g)$ with nonnegative Ricci curvature, Euclidean volume growth, and quadratic Riemann curvature decay. More precisely, we shall assume that $\rm{Ric}\ge 0$ everywhere on $M^n$ and that for some (and hence for every) base point $p\in M$ it holds
\begin{equation}\label{eq:EVG}
   {\rm AVR}(M):=\lim_{r\to \infty}\frac{\mathrm{vol}(B_r(p))}{\omega_n r^n}>0\, ,
\end{equation}
where $\omega_n$ is the volume of the unit ball in $\R^n$, and 
\begin{equation}\label{eq:QCD}
    |\mathrm{Riem}(x)|\le \frac{C}{\d^2(x,p)}\, \quad \text{for all $x\in M$}\, ,
\end{equation}
for some constant $C>0$. This class of manifolds has been studied thoroughly in the literature and includes several examples of interest.
Under these assumptions, it is known (see for instance \cite{MondinoNardulli,AntonelliBrueFogagnoloPozzetta}) that isoperimetric sets with volume $V$ exist for each $V>0$. 

It is well-known that Riemannian manifolds with nonnegative Ricci curvature and Euclidean volume growth have metric cones as blow-downs, namely any pointed sequence $(M, r_i^{-2} g, o)$ for $o \in M$ and diverging $r_i$ admits a subsequence converging in pointed measured Gromov--Hausdorff sense to an $n$-dimensional metric measure cone $(C,\dist_C, \haus^n)$, see \cite{CheegerColding96}. Assuming in addition \eqref{eq:QCD}, then all the blow-downs of $(M^n,g)$ are metric cones $(C(N), \d r^2+r^2g_N)$ over some compact Riemannian manifold $(N,g_N)$, where $g_N$ is a $C^{1,\alpha}$ Riemannian metric for each $\alpha<1$, and $\mathrm{Ric}_N\ge n-2$ (possibly in a weak sense).  
Although the blow-down of $(M^n,g)$ is not necessarily unique under these assumptions, see \cite{Perelman97} and the more recent \cite{ColdingNabercones}, the manifold $(M^n,g)$ is $C^{1,\alpha}$ asymptotic to the family of its blow-down cones, for each $\alpha<1$.
Moreover, for each of these asymptotic cones the isoperimetric set with volume $V$ is unique, it has strictly volume preserving stable boundary, and it coincides with a ball centered at the vertex for each volume $V>0$, see \cite{MorganRitore02}, unless the blow-down is isometric to $\mathbb{R}^n$. However, if a blow-down is isometric to $\mathbb R^n$, then $(M^n,g)$ is necessarily isometric to $\mathbb{R}^n$, see \cite{Coldingvolume}.

Our main result shows that uniqueness and strict volume preserving stability of the isoperimetric sets hold for \emph{most} sufficiently large volumes also for the complete manifold $(M^n,g)$.

\begin{thm}\label{thm:mainthm}
    Let $(M^n,g)$ be a smooth complete Riemannian manifold with $\mathrm{Ric}\ge 0$, Euclidean volume growth \eqref{eq:EVG}, and quadratic Riemann curvature decay \eqref{eq:QCD}. Assume that $(M^n,g)$ is not isometric to $\mathbb{R}^n$. Then there exists a measurable set $\mathcal{G}\subset (0,\infty)$ such that 
    \begin{equation}
        \lim_{r\to \infty}\frac{\mathcal{L}^1(\mathcal{G}\cap (r,2r))}{r}=1\, ,
    \end{equation}
    and for each $V\in \mathcal{G}$ there exists a unique isoperimetric set $E_V\subset M^n$ with $\mathrm{vol}(E_V)=V$. Moreover, the boundary of $E_V$ is a smooth strictly volume preserving stable constant mean curvature hypersurface.  
\end{thm}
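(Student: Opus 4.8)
The plan is to combine quantitative convergence to the blow-down cones with the uniqueness and strict stability already available on the cones, using a compactness/contradiction argument that only loses a set of volumes of density zero at infinity. First I would normalize: given a volume $V$, consider the rescaled manifold $(M, V^{-2/n} g)$ so that the isoperimetric sets of volume $V$ in $M$ correspond to isoperimetric sets of volume $\omega_n$ (say) in the rescaled picture. Since $(M^n,g)$ has $\mathrm{Ric}\ge 0$, Euclidean volume growth and quadratic Riemann decay, the rescalings $(M, r_i^{-2}g, o)$ subconverge in pointed $C^{1,\alpha}$-topology (away from the vertex) to metric cones $(C(N), \d r^2 + r^2 g_N)$, and by the cited results of Morgan--Ritoré the isoperimetric region of any fixed volume in each such cone is the unique, strictly volume preserving stable metric ball centered at the vertex, provided the cone is not $\mathbb{R}^n$; and it is never $\mathbb{R}^n$ here because $(M^n,g)$ is not. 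The strategy is then: (i) show that isoperimetric sets of large volume in $M$ are close, after rescaling, to balls centered at the vertex in a blow-down cone; (ii) upgrade this closeness to smooth closeness via the regularity theory for isoperimetric boundaries (a density/$\varepsilon$-regularity argument, since the mean curvatures are controlled and the ambient geometry converges smoothly); (iii) transfer strict volume preserving stability from the limiting ball in the cone to the isoperimetric boundary in $M$, for all large $V$; (iv) deduce uniqueness for a density-one set of volumes.

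For step (iii), the key analytic input is that strict volume preserving stability is an open condition under smooth convergence of CMC hypersurfaces in converging ambient manifolds: the second variation (Jacobi) form of a CMC boundary, restricted to mean-zero normal variations, is a compact perturbation of a Laplace-type operator, so its bottom eigenvalue depends continuously on the data; since the limit ball in the cone has a strictly positive such eigenvalue (by Morgan--Ritoré), so does the isoperimetric boundary in $M$ for all sufficiently large $V$. This gives strict volume preserving stability for \emph{every} large $V$, not merely a density-one set.

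The reason uniqueness is only obtained for a density-one set $\mathcal{G}$ — and here I expect the main subtlety — is that the blow-down need not be unique (the cited Perelman and Colding--Naber examples), so for a given $V$ there could in principle be two isoperimetric sets each close to balls in \emph{different} blow-down cones, and one cannot rule this out for every $V$. The way around this is to use the isoperimetric profile $\mathcal{I}_M(V)$: strict volume preserving stability of $E_V$ forces the profile to be, near $V$, uniformly strictly above the line of its subderivative (a "second-order" strict inequality coming from the stability eigenvalue gap), which in turn forces the normalized quantity $\mathcal{I}_M(V) V^{-(n-1)/n}$ to converge to the constant $\mathrm{AVR}(M)^{1/n}\omega_n^{1/n}$ (the common value for all blow-down balls) with a definite rate; then a covering/measure-theoretic argument (of the type used to prove that a monotone or BV function is differentiable a.e., applied to $\log \mathcal{I}_M(e^t)$) shows that outside a set of volumes of density zero at infinity the isoperimetric set is unique — any second isoperimetric set would, by the stability-derived rigidity, be forced to coincide with the first.

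The step I expect to be the main obstacle is the density-one conclusion for uniqueness: making precise how strict volume preserving stability at volume $V$ constrains neighboring volumes, and then running the measure-theoretic argument to discard only a density-zero set, is the delicate part — it requires quantifying the stability gap uniformly in $V$ (via the smooth convergence to cones) and coupling it with the almost-monotonicity and concavity-type properties of $\mathcal{I}_M$ established for manifolds with $\mathrm{Ric}\ge 0$ and Euclidean volume growth. Steps (i)--(ii), by contrast, are by now fairly standard consequences of the existence theory, Gromov--Hausdorff compactness, and $\varepsilon$-regularity for almost-minimizing boundaries, and step (iii) is a routine eigenvalue-continuity argument once the smooth convergence in (ii) is in place.
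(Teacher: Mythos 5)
Your step (iii) contains a genuine error, and it is exactly the obstacle that this paper is designed to overcome. You claim that strict volume preserving stability passes to the limit under the rescaling to the blow-down cone because ``the second variation (Jacobi) form... is a compact perturbation of a Laplace-type operator, so its bottom eigenvalue depends continuously on the data,'' and you conclude that ``this gives strict volume preserving stability for \emph{every} large $V$, not merely a density-one set.'' This is false, and Theorem~\ref{thm:counterexamples} is precisely a counterexample: there is a rotationally symmetric surface satisfying all the hypotheses whose large metric circles fail to be volume preserving stable on unboundedly large intervals of radii. The reason your continuity argument breaks down is that the ambient convergence $M_{V_i}\to C(N)$ is only $C^{1,\alpha}$, $\alpha<1$, away from the vertex; even though the isoperimetric boundaries $\partial E_i$ themselves do converge in $C^{2,\alpha}$ (by the elliptic bootstrap you sketch in step (ii)), the potential $\ric(\nu,\nu)$ appearing in the Jacobi form~\eqref{eq:stabform} involves second derivatives of the ambient metric and does \emph{not} converge — it need not go to zero even though the limiting cone has vanishing radial Ricci. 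Your phrase ``continuously on the data'' silently assumes $C^2$ convergence of the ambient, which is unavailable here. A corollary is that your diagnosis of why the uniqueness conclusion is density-one is also off: you attribute it to non-uniqueness of blow-downs, but the example of Theorem~\ref{thm:counterexamples} has a unique blow-down. The density-one restriction is forced already at the stability stage, not merely at the uniqueness stage.

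The correct mechanism, which your proposal does not contain, is to control the offending Ricci term only \emph{on average} in $V$ rather than pointwise. The paper does this (Proposition~\ref{lemma:Ricciiso0}) by reversing the classical second-variation proof of concavity of $J=I^{n/(n-1)}$: $J$ is concave and asymptotically affine, so $\int_r^{2r}|J''|\to 0$, and since $-J''(V)$ dominates $P(E_V)^{(2-n)/(n-1)}\fint_{\partial E_V}\ric(\nu,\nu)$, one deduces $V^{2/n}\fint_{\partial E_V}\ric(\nu,\nu)\le\eps$ for a density-one set of $V$. That is what rescues Lichnerowicz--Obata in the limit and gives the stability gap $\eps_M V^{-2/n}$. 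Your closing paragraph invokes the isoperimetric profile in the right spirit (concavity, asymptotic slope), but the argument you sketch there — that strict stability at $V$ forces the profile to be strictly above its subtangent and hence yields a rate of convergence for $I(V)V^{-(n-1)/n}$ — runs in the wrong direction: stability is what you are trying to \emph{prove}, so you cannot use it as an input, and in any case what is needed for uniqueness is a stronger, Hardy--Littlewood maximal estimate on $|J''|$ applied along the whole foliation by equidistants $E_t$ (Proposition~\ref{lemma:maxRicci}), not just a one-sided estimate at $V$. Finally, your appeal to ``stability-derived rigidity'' to conclude uniqueness glosses over the real difficulty: the neighborhoods of unique minimality produced by strict stability have no a priori scale-invariant lower bound on their size, so even with two stable competitors close after rescaling, uniqueness does not follow for free; the paper handles this by an effective Taylor expansion of perimeter via $\Delta\d^s_{E_V}$, coupled with the graph estimates of Proposition~\ref{prop:graphfunction}.
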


\cref{thm:mainthm} is sharp, in the sense that neither the uniqueness nor the strict stability statement can be obtained for \emph{all} sufficiently large volumes in this generality. Indeed, we construct examples of complete surfaces with nonnegative Gaussian curvature, quadratic volume growth, and quadratic curvature decay such that uniqueness and strict stability fail for isoperimetric sets of arbitrarily large volumes.

\begin{thm}\label{thm:counterexamples}
    There exists a complete smooth Riemannian surface $(M^2,g)$ with nonnegative Gaussian curvature, quadratic volume growth \eqref{eq:EVG}, and quadratic curvature decay \eqref{eq:QCD}, such that the following holds. There are disjoint intervals $L_n\subset (0,\infty)$ with $\inf L_n \to \infty$ and $|L_n|\to \infty$ as $n\to\infty$, such that for every $n\in\mathbb N$, and for every $V\in L_n$, there are isoperimetric sets with volume $V$ in $M$ which are neither strictly volume preserving stable, nor unique.
\end{thm}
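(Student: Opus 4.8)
The plan is to realise $(M^2,g)$ as an explicit surface of revolution carrying a sequence of small ``spherical bumps'' that diverge to infinity, and then to read off non-uniqueness and degeneracy of the second variation from the geometry of one bump. Concretely, take $g=\d t^2+h(t)^2\,\d\theta^2$ on $\R^2$ with $h$ smooth, $h(0)=0$, $h'(0)=1$, $h$ increasing and concave --- so that the Gauss curvature $K=-h''/h$ is nonnegative --- and $h'(t)\to\alpha$ as $t\to\infty$ for a suitable $\alpha\in(0,1)$. Fix a very sparse sequence of scales $a_n\to\infty$ (e.g.\ $a_n=C_0\,4^n$ with $C_0$ large) and a small fixed length $\ell>0$, and on each $[a_n,a_n+\ell]$ let $h$ agree with a near-equatorial arc of the profile of a round sphere of radius $A_n$, i.e.\ $h(t)=A_n\cos\!\big((c_n-t)/A_n\big)$, with $A_n$ and $c_n$ chosen so that $h$ and $h'$ match the values coming from the neighbouring regions; between consecutive bumps let the metric be exactly conical ($h''\equiv 0$), and mollify near the junctions to keep $g$ smooth. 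One then checks: $K\equiv A_n^{-2}$ on the $n$-th bump, where $\d(\cdot,p)\simeq a_n\simeq A_n$, and $K\equiv 0$ elsewhere, so \eqref{eq:QCD} holds; the drop of $h'$ across the $n$-th bump is $\simeq (1-\alpha^2)\ell/(\alpha a_n)$, hence summable, so $h'\to\alpha$ can be arranged with $\int_M K=2\pi(1-\alpha)<2\pi$. Thus $\mathrm{AVR}(M)=\alpha\in(0,1)$, the volume growth is quadratic, $M$ is not isometric to $\R^2$, and isoperimetric sets of every volume exist by \cite{MondinoNardulli,AntonelliBrueFogagnoloPozzetta}.

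The second ingredient is the pointwise identity $h'(t)^2+h(t)^2K(t)=\sin^2+\cos^2=1$ on every bump. It makes the second variation of perimeter under volume-preserving deformations of a parallel $\Sigma_T=\{t=T\}$, $T\in[a_n,a_n+\ell]$, equal to $Q(\varphi,\varphi)=\int_{\Sigma_T}\big(|\varphi'|^2-h(T)^{-2}\varphi^2\big)$, which is $\ge 0$ with two-dimensional kernel $\mathrm{span}\{\cos\theta,\sin\theta\}$: so $\Sigma_T$ is volume-preserving stable but not strictly so. Furthermore the $n$-th bump is isometric to a spherical annulus, so all circles on it with the geodesic curvature $h'(T)/h(T)$ of $\Sigma_T$ are congruent and hence of length $2\pi h(T)$; and a short computation shows that the ``inner'' spherical cap cut off by such a circle $\gamma$ has volume independent of the tilt (rotations of the sphere preserve volume, and for small tilts that cap still contains the whole circle $\{t=a_n\}$, so no volume leaks out past it). Therefore $E_\gamma:=\{t<a_n\}\cup(\text{inner cap cut off by }\gamma)$ satisfies $\vol(E_\gamma)=\vol(\{t<T\})=:V(T)$ and $\mathrm{Per}(E_\gamma)=2\pi h(T)$. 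Taking $T$ in the middle third of each $[a_n,a_n+\ell]$ and setting $L_n:=V\big((a_n+\ell/3,\,a_n+2\ell/3)\big)$, the $L_n$ are pairwise disjoint, $\inf L_n\to\infty$, and $|L_n|\ge\tfrac{2\pi\ell}{3}h(a_n)\gtrsim a_n\to\infty$, and for every $V\in L_n$ the sets $E_\gamma$ form a one-parameter family of mutually distinct sets of volume $V$ and equal perimeter, each carrying the tilt vector field as a volume-preserving Jacobi field. Hence, \emph{once one knows these $E_\gamma$ are isoperimetric}, \cref{thm:counterexamples} follows.

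The hard part --- where essentially all the work is --- is to prove that for $V\in L_n$ an isoperimetric set is one of these $E_\gamma$. The usual tool for surfaces of revolution, symmetrisation, is unavailable, and for a structural reason: since $\int_M K<2\pi$, the curvature cannot obey $K\gtrsim\d(\cdot,p)^{-2}$ on a set of infinite logarithmic measure, so the bumps --- which must have $K\gtrsim a_n^{-2}$ precisely to force $h'^2+h^2K\ge 1$ --- cannot be spread out, and $K$ is necessarily non-monotone. I would instead argue by comparison with the model spaces. Away from the bumps $M$ is exactly a metric cone $C(S^1_{2\pi\alpha_n})$, whose isoperimetric sets are unique, strictly stable balls at the vertex \cite{MorganRitore02}, while the $n$-th bump contributes total curvature $o(1)$ and volume $\ll V$. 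Using the sharp isoperimetric inequality $\mathrm{Per}(E)^2\ge 2\big(2\pi-\int_M K\big)\vol(E)=4\pi\alpha\,\vol(E)$ for $\mathrm{Ric}\ge 0$, together with the explicit competitors $\mathrm{Per}(\{t<T\})=2\pi h(T)$, one can pin down the isoperimetric profile of $M$ near $L_n$ tightly enough to exclude competitors whose boundary lies in a conical region or near a different bump; the surviving competitors have boundary confined to the $n$-th bump, where a Lyapunov--Schmidt reduction on the two-dimensional kernel above --- degenerate to second order because the bump is a genuine spherical annulus, so the reduced functional is constant along $\{E_\gamma\}$ --- transfers isoperimetricity from the unperturbed cone to $M$. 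The main obstacle will be to carry out this comparison quantitatively and uniformly in $n$; granted it, \cref{thm:counterexamples} follows from the construction together with the Jacobi-field computation above.
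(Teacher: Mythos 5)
Your proposal has a genuine gap, and you flag it yourself: the step ``prove that for $V\in L_n$ an isoperimetric set is one of these $E_\gamma$'' is only sketched, and that is where essentially all of the difficulty lives. The issue is structural. You build the bumps so that $h'^2-hh''=1$ there, which makes the stability form on a centered circle $\Sigma_T$ equal to $\int(|\varphi'|^2-h(T)^{-2}\varphi^2)\ge0$ — \emph{marginal} stability, not instability. Marginal stability does not disqualify $\Sigma_T$ as an isoperimetric boundary, so you cannot rule the centered ball out and must instead \emph{identify} the minimizer (or at least exhibit one), which is exactly what the Lyapunov--Schmidt/model-comparison argument is supposed to do but doesn't. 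The two claims carrying all the weight — that an isoperimetric boundary is confined to the $n$-th bump, and that the reduced functional is constant along $\{E_\gamma\}$ and minimizes there — are asserted, not proved, and proving them uniformly in $n$ is a serious undertaking.

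The paper takes a shorter and more robust route that you are missing, and it is worth internalizing. Rather than engineering marginal stability and then having to find the minimizer, it engineers \emph{strict instability}: on the warped product $g=\d r^2+\varphi(r)^2 g_{\mathbb S^1}$, the function $\varphi$ is built concave with $\varphi\varphi''-(\varphi')^2+1<0$ on diverging intervals $J_n$, so the first Fourier mode $f=\cos\theta$ is a volume-preserving perturbation of $\partial B_r(0)$, $r\in J_n$, with strictly negative second variation. Hence the centered ball cannot be isoperimetric for those volumes. Isoperimetric sets of every volume exist; whatever the minimizer $E$ of volume $V\in L_n$ is, it is therefore not rotationally invariant, and rotating it produces a genuine one-parameter family of distinct isoperimetric sets of the same volume. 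This gives non-uniqueness immediately, and non-strict stability by differentiating the family. The construction never needs to know what $E$ actually is. If you want to keep your spherical-bump picture, the fix is to tilt the bumps so that $h'^2-hh''>1$ strictly on a sub-interval (which is precisely the paper's inequality in $\varphi$-language); if you want to keep the exact spherical annuli, you must finish the minimizer-identification argument, which your draft does not do.
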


The construction of \cref{thm:counterexamples} can be easily generalized to obtain analogous examples with $\mathrm{Ric}\ge 0$ satisfying \eqref{eq:EVG} and \eqref{eq:QCD} in any dimension $n\ge 3$. 
\medskip

The question of existence, uniqueness, and characterization of isoperimetric sets for large volumes, and more in general of constant mean curvature hypersurfaces on complete Riemannian manifolds under various types of curvature constraints and asymptotic conditions at infinity has received a great deal of attention in the last thirty years, see for instance \cite{HuiskenYau,QingTian,BrayPhD,Brendle,BrendleEichmairInventiones,BrendleEichmairJDG,EichmairMetzgerinv,EichmairMetzgerJGD,ChodoshEichmairDuke,ChodoshEichmairCrelle,ChodoshEichmairShiYu21,GuanLiWang}, and the references therein. We mention in particular \cite{ChodoshEichmairVolkmann} where the authors obtain the uniqueness of isoperimetric regions for each sufficiently large volume in complete Riemannian manifolds $(M^n,g)$ that are $C^{2,\alpha}$-asymptotic to a fixed cone $C(N^{n-1})$ whose link $(N^{n-1},g_N)$ satisfies 
\begin{equation}\label{eq:link}
\mathrm{Ric}_N\ge n-2\, , \quad \quad \mathrm{vol}(N)<\mathrm{vol}(\mathbb S^{n-1})\, .
\end{equation}
Note that the conditions in \eqref{eq:link} are satisfied for each cross-section of any blow-down of a manifold $(M^n,g)$ as in the assumptions of \cref{thm:mainthm}. On the other hand, the assumptions of \cref{thm:mainthm} are not sufficient to guarantee neither the uniqueness of the blow-down, nor the $C^{2,\alpha}$ convergence.

The mild curvature decay condition \eqref{eq:QCD} makes the techniques of \cite{ChodoshEichmairVolkmann}, which exploit the implicit function theorem leveraging the higher regularity at infinity,
unsuitable for the present setting. 
Moreover, to the best of our knowledge, the \emph{uniqueness on average} captured in \cref{thm:mainthm} had not been studied before.\\
In the rest of the introduction we outline the proof of \cref{thm:mainthm} and highlight the main novelties of our approach.

\subsection*{Strict stability for many volumes}
To set the notation, we recall that the stability (or Jacobi) operator of a smooth two-sided constant mean curvature hypersurface $\Sigma^{n-1}\subset M^n$ takes the form
\begin{equation}\label{eq:stabop}
    C^{\infty}(\Sigma)\ni f\mapsto -\Delta_{\Sigma}f-(\ric(\nu,\nu)+|\sff|^2)f\in C^{\infty}(\Sigma)\, ,
\end{equation}
and it is associated to the quadratic form
\begin{equation}\label{eq:stabform}
    C^{\infty}(\Sigma)\ni f\mapsto I(f,f):=\int_{\Sigma}\left[|\nabla_{\Sigma}f|^2-(\ric(\nu,\nu)+|\sff|^2)f^2\right]\de\vol_{\Sigma}\, .
\end{equation}
In \eqref{eq:stabop} and \eqref{eq:stabform}, $\nu$ and $\sff$ denote a unit normal to $\Sigma$  and its second fundamental form, respectively. If $\Sigma$ is the boundary of an isoperimetric region and $\Sigma$ is smooth, then it is a volume preserving stable constant mean curvature hypersurface. Namely, the stability operator in \eqref{eq:stabop} is nonnegative definite on the subspace of functions with $0$ average on $\Sigma$.\footnote{In order to avoid confusion we stress that in general this is not an invariant subspace under the action of the Jacobi operator.} Equivalently, the quadratic form in \eqref{eq:stabform} on $C^\infty(\Sigma)$ is nonnegative whenever $\int_{\Sigma} f=0$.
We say that such constant mean curvature hypersurface $\Sigma$ is \emph{strictly (volume preserving) stable} provided there exists $\lambda>0$ such that
\begin{equation}
    I(f,f)\ge \lambda\int_{\Sigma}f^2\de\vol_{\Sigma},\, \quad \text{for all} \,  f\in C^{\infty}(\Sigma)\,\,  \text{such that }\, \int_{\Sigma}f\de\vol_{\Sigma}=0\, .
\end{equation}
\smallskip

Under the assumptions of \cref{thm:mainthm}, for each volume $V>0$ it is natural to consider the rescaled pointed manifold 
\begin{equation}
M_V:=(M^n,r_{V}^{-1}\d_g,p) \, ,
\end{equation}
where we set
    \begin{equation}\label{eq:rVintro}
r_V:=\left({\rm AVR}(M) \, \omega_n\right)^{-\frac{1}{n}}V^{\frac{1}{n}}\, ,
    \end{equation} 
with $\omega_n$ the volume of the unit ball in $\mathbb R^n$.
For any sequence $V_i\to \infty$, up to the extraction of a subsequence that we do not relabel, the pointed manifolds $M_{V_i}$ converge to one of the blow-downs $C(N)$ of $(M^n,g)$ in the pointed Gromov-Hausdorff sense and in the $C^{1,\alpha}$ sense locally away from the singular vertex, for each $\alpha<1$, see for instance \cite{CheegerColding96}. 
Denoting by $E_i$ a sequence of isoperimetric sets in the rescaled manifold $M_{V_i}$ with volume equal to ${\rm AVR}(M)\omega_n$ in the rescaled metric, a by now standard argument shows that the isoperimetric boundaries $\partial E_i$ converge in the Hausdorff sense to the boundary of $B_1(o)$, the ball of radius $1$ centered at the vertex of the limit cone $C(N)$, along this converging sequence. See \cref{lemma:hausconv} below for the precise statement and \cite{AntonelliPasqualettoPozzettaSemola1} for a proof in a more general setting. For comparison, let us mention that in the literature about the isoperimetric problem in the asymptotically flat case, see for instance \cite{EichmairMetzgerinv,ChodoshEichmairShiYu21}, proving that rescalings of isoperimetric sets converge to a centered ball for large volumes is a central difficulty.

Note that on a cone the Ricci curvature in the radial direction vanishes, and balls centered at the vertex are totally umbilical. Hence, it follows from \eqref{eq:link} and the Lichnerowicz and Obata theorems that $\partial B_1(o)$ is strictly stable, unless the cone is isometric to $\R^n$.
Ideally, we would like to prove that the stability operators (under volume preserving variations) of $\partial E_i\subset M_{V_i}$ converge in some sense to the stability operator of the limit isoperimetric region $B_1(o)\subset C(N)$ in the blow-down, or at least that
their first eigenvalues are lower semicontinuous with respect to $i$.
The lower semicontinuity would be clearly enough to prove the strict stability of $\partial E_i$ for any sufficiently large $i\in\mathbb{N}$. However,
\cref{thm:counterexamples} shows that this is too optimistic in the present generality. We shall see that the main issue is related to the Ricci curvature term of the stability operator, which should be viewed as an error term and does not converge to $0$ in general. 
The key for us will be that the Ricci curvature term goes to $0$ on average with respect to the volume, see \cref{lemma:Ricciiso0} for a precise statement. Establishing this convergence requires a new method with respect to the above mentioned list of references. Indeed, for the proof of \cref{lemma:Ricciiso0} we will reverse the usual argument for proving the concavity of the isoperimetric profile via the second variation formula, see \cite{BavardPansu,BrayPhD,Bayle04,AntonelliPasqualettoPozzettaSemola1}.

\subsection*{Uniqueness for many volumes}
The proof of the uniqueness part of \cref{thm:mainthm} is based on the broad principle that strictly stable implies locally uniquely minimizing (possibly under a volume constraint). This principle was originally put forward for minimal hypersurfaces in \cite{Whiteminimax}, and later extended to constant mean curvature hypersurfaces in \cite{GrosseBrauckman} and \cite{MorganRos}. However, the implementation in the present setting requires some new insights. Most importantly, the principle cannot be implemented without restricting the set of volumes, as the examples constructed in \cref{thm:counterexamples} illustrate.
Indeed, the blow-down argument discussed above shows that, for $V$ large enough, if there exist two different isoperimetric regions $E_V,E_V'\subset M$ with volume $V$ then they must be close to each other in a scale-invariant sense and this scale-invariant distance converges to $0$ as $V\to\infty$. If both $E_V$ and $E'_V$ have strictly stable boundaries, then by \cite{GrosseBrauckman,MorganRos} they have neighborhoods $\partial E_V\subset U$ and $\partial E'_V\subset U'$ where they are uniquely isoperimetric. The difficulty stems from the fact that the size of these neighborhoods might not be scale invariantly bounded away from $0$. Hence it is not clear whether $\partial E_V'\subset U$ or $\partial E_V\subset U'$. We shall see that it is possible to effectively estimate the size of these neighborhoods, up to further restricting the set of \emph{good} volumes. This estimate requires a new method with respect to those present in the literature, due to the weak convergence of $(M^n,g)$ to its tangent cones under rescaling.
\medskip

There will be three main steps in the proof of the uniqueness. The first step will be to show that if there are two distinct isoperimetric regions $E_V,E_V'$ with the same sufficiently large volume $V$ and the same mean curvature of the boundary, then $\partial E_V'$ can be written as a normal graph over $\partial E_V$ with effective scale-invariant bounds on the graphing function. See \cref{prop:graphfunction} for the precise statement. This step requires some effective versions of classical estimates in geometric measure theory.

The second step of the proof will amount to getting an effective control on the Ricci curvature of $(M^n,g)$ in the direction perpendicular to the boundary of an isoperimetric set, for each isoperimetric set of volume $V$ and for most large volumes $V$. To accomplish this, we use an argument involving estimates on the maximal function of the second derivative of a power of the isoperimetric profile, see the proof of \cref{lemma:maxRicci}. To the best of the authors' knowledge, the only other instance in the literature where the isoperimetric profile is used to show the uniqueness of isoperimetric sets is \cite{BrayPhD}. However, in that case, there is exact Schwarzschild symmetry outside a compact set, and the isoperimetric profile satisfies an ODE.

In the last step we will effectively perform a Taylor expansion of the perimeter functional near $\partial E_V$ by integrating the Laplacian of the signed distance function $\d^s_{E_V}$ in the region \emph{between} $E_V$ and $E'_V$. We note that \emph{effective} Taylor expansions of the area functional have been employed with a different aim in the recent \cite{KetoverMarquesNeves}.
The approach of this paper seems to be novel, even though the relationship between the Laplacian of the distance function, the mean curvature and the stability operator has been used extensively in the last decades, see for instance \cite{MeeksPerezRossurvey,MeeksPerezRoslamination,RosRosenberg}. Setting 
\begin{equation}
u:=w-\frac{1}{P(E_V)}\int_{\partial E_V}w\, \d P_{E_V}\, ,
\end{equation}
with $w$ denoting the graphing function obtained in the first step described above, the main terms in the Taylor expansion of the perimeter will be the stability operator $I_{\partial E_V}(u,u)$ and an error term involving the Ricci curvature of $M$. While the Taylor expansion holds for every volume, we will be able to exploit the result of the second step to show that for most large volumes the Ricci curvature term is negligible. Hence we can deduce uniqueness of isoperimetric sets for such volumes. Indeed, if for such a volume $E_V$ and $E_V'$ are distinct isoperimetric sets, observe they have the same perimeter. Since the stability operator $I_{\partial E_V}(u,u)$ is the dominating term in the effective Taylor expansion of the perimeter, the strict stability of $\partial E_V$ yields a contradiction.

\subsection*{Conjectures and Open Questions}
We end this introduction by mentioning a few open questions that arose during the present work and might be worthwhile of future investigation.

\medskip

In the case of surfaces, i.e., for $n=2$, a slight variant of the argument discussed in \cref{sec:strictstab} allows to prove that most isoperimetric sets have strictly volume preserving stable boundary without assuming that the Gaussian curvature decays quadratically. In this regard we raise the following:

\begin{conj}
  If $n=2$, \cref{thm:mainthm} holds true without assuming that the Gaussian curvature decays quadratically.
\end{conj}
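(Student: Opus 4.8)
The plan is to revisit the proof of the strict stability part of \cref{thm:mainthm} (as outlined in the discussion preceding \cref{lemma:Ricciiso0} and carried out in \cref{sec:strictstab}) and check that, when $n=2$, the quadratic curvature decay hypothesis \eqref{eq:QCD} is never genuinely used. First, I would recall which consequences of \eqref{eq:QCD} enter the argument: it is used to guarantee that every blow-down of $(M^2,g)$ is a metric cone $C(N)$ with $C^{1,\alpha}$ regular link, and that the convergence $M_{V_i}\to C(N)$ happens in a $C^{1,\alpha}$ sense locally away from the vertex, so that in particular the isoperimetric boundaries converge (via \cref{lemma:hausconv}) and the comparison set $\partial B_1(o)$ in the cone is strictly stable unless the cone is $\R^2$. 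The key point for $n=2$ is that nonnegative Gaussian curvature plus Euclidean (here: quadratic) volume growth \eqref{eq:EVG} already forces every blow-down to be a flat metric cone over a circle of length $2\pi\,\mathrm{AVR}(M)<2\pi$, by the classical structure theory of surfaces of nonnegative curvature (Cohn-Vossen, Huber, Shioya), with no curvature decay needed. Such a cone is never isometric to $\R^2$ since $(M^2,g)$ is not, and $\partial B_1(o)$ in it is strictly volume preserving stable with a quantitative spectral gap depending only on $\mathrm{AVR}(M)$. So the ``limit model is strictly stable'' input survives.

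Next I would address the core mechanism: in the proof, the first eigenvalue of the volume-preserving stability operator of $\partial E_i\subset M_{V_i}$ is compared with that of the limit, and the only obstruction to lower semicontinuity is the Ricci term $\ric(\nu,\nu)=K$ (the Gaussian curvature, when $n=2$) appearing in \eqref{eq:stabform}, which is a nonnegative error term that need not vanish pointwise. The content of \cref{lemma:Ricciiso0} is precisely that this term goes to $0$ \emph{on average in $V$}, and that proof — which reverses the second-variation argument for concavity of the isoperimetric profile, following \cite{BavardPansu,BrayPhD,Bayle04,AntonelliPasqualettoPozzettaSemola1} — only uses: existence of isoperimetric sets for all volumes (true here by \cite{MondinoNardulli,AntonelliBrueFogagnoloPozzetta} under \eqref{eq:EVG} alone), the structure of the isoperimetric profile $I_M$ and its known asymptotics $I_M(V)\sim n\,(\mathrm{AVR}(M)\omega_n)^{1/n}V^{(n-1)/n}$ as $V\to\infty$, and the sign of the second variation. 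None of these requires \eqref{eq:QCD}. Likewise, \cref{lemma:hausconv} (Hausdorff convergence of rescaled isoperimetric boundaries to a centered ball) holds in the stated generality from \cite{AntonelliPasqualettoPozzettaSemola1} using only $\ric\ge 0$ and \eqref{eq:EVG}. Thus I would assemble: for $V$ outside a bad set of density $0$, the Ricci/curvature term is small, the remaining part of the stability form converges to the strictly positive limit form, and so $\partial E_V$ is strictly volume preserving stable.

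The step I expect to be the main obstacle is the regularity of the convergence $M_{V_i}\to C(N)$ away from the vertex: without \eqref{eq:QCD} one only has, a priori, pointed measured Gromov–Hausdorff convergence (Cheeger–Colding) rather than $C^{1,\alpha}$ convergence, and this is used to pass the stability inequality to the limit and to identify the limit of $\partial E_i$ as a smooth totally geodesic circle in the cone over which the limit form is computed. For $n=2$ I would resolve this by exploiting that blow-downs of nonnegatively curved surfaces are flat cones, so the convergence is smooth (indeed locally isometric to a flat sector) away from the vertex — for surfaces of nonnegative curvature with quadratic volume growth, the ends are smoothly asymptotic to flat cones by Hartman-type asymptotics — and by noting, as the authors already remark, that a ``slight variant'' of the \cref{sec:strictstab} argument suffices; concretely, one replaces $C^{1,\alpha}$-closeness estimates by the one-dimensional analysis available on a surface, where the stability operator on the curve $\partial E_V$ is just $-\partial_s^2 - K$ along an arclength parameter and the lowest volume-preserving eigenvalue can be estimated directly from the length $P(E_V)$ and $\int_{\partial E_V}K\,\d s$, the latter being $O(1)$ and negligible after rescaling by $P(E_V)\to\infty$ for most $V$. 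The remaining work is then bookkeeping: tracking that all exceptional $V$-sets have density $0$ at infinity and intersecting them, which is routine.
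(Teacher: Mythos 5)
The statement you are trying to prove is a \emph{conjecture} in the paper, not a theorem, so there is no proof of it to compare against. The authors themselves say, right before stating it, that ``a slight variant of the argument discussed in \cref{sec:strictstab} allows to prove that most isoperimetric sets have strictly volume preserving stable boundary without assuming that the Gaussian curvature decays quadratically'' --- that is, they already know how to drop \eqref{eq:QCD} for the \emph{stability} half of \cref{thm:mainthm} when $n=2$. The reason the full statement is nevertheless posed as an open conjecture is the \emph{uniqueness} half. Your proposal only discusses the stability part: you revisit Step~1--Step~3 of \cref{thm:strictstab}, argue that \cref{lemma:hausconv} and \cref{lemma:Ricciiso0} survive without \eqref{eq:QCD}, and replace the Allard/second-fundamental-form machinery by the one-dimensional observation that the stability operator on a curve is $-\partial_s^2-(\kappa^2+K)$. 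This is plausible and essentially reproduces the authors' remark, but it does not touch \cref{thm:unique}, which is the genuinely open part.

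The uniqueness proof in the paper depends on ingredients that use \eqref{eq:QCD} in an essential-looking way and which have no cheap $n=2$ substitute offered in your sketch. The scale-invariant reach lower bound (\cref{lem:PositiveReach}), the graph representation with quantitative $C^{1}$/$L^2$ estimates (\cref{prop:graphfunction}, via \cref{C1gamma}, \cref{thm:SecondeFormeConvergono}, and \cref{thm:HarmonicRadius}), the maximal-function Ricci estimate on tubular neighborhoods (\cref{lemma:maxRicci}), and the Taylor expansion in Fermi coordinates all lean on two-sided curvature bounds along $\partial E_V$ and its tubular neighborhood. You would need to explain why, for $n=2$ without curvature decay, the reach of $\partial E_V$ is still bounded below at the scale $V^{1/n}$, why the Fermi-coordinate expansions of \cref{sec:estimategraphs} still close, and why \cref{lemma:maxRicci} (an $L^\infty_r L^1$ estimate on $K$ in a whole tubular neighborhood, not just along $\partial E_V$) can be obtained --- none of which is addressed.

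Two smaller points. First, the claim that $\int_{\partial E_V}K\,\d s = O(1)$ is not what is needed nor what \cref{lemma:Ricciiso0} provides; the usable statement is the scale-invariant average bound \eqref{eq:estRiccistab}, and when you unwind the scaling for $n=2$ you get $\int_{\partial E_V}K\,\d s\lesssim\eps V^{-1/2}$ on good volumes, which is what actually makes the perturbation term negligible after rescaling. Second, the assertion that a surface with nonnegative curvature and quadratic volume growth is ``smoothly asymptotic to a flat cone'' by Hartman-type results is not correct in the generality you invoke; without any curvature-decay hypothesis the curvature can oscillate wildly at infinity (this is exactly the phenomenon exploited, under \eqref{eq:QCD}, in \cref{thm:counterexamples}), and the asymptotics are only Gromov--Hausdorff. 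Fortunately this is not fatal for the stability half, since in dimension $2$ the needed convergence of $\partial E_i$ to a circle can be extracted from length and geodesic-curvature asymptotics alone; but it should not be claimed as a smooth-convergence fact.

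In short: your argument is a reasonable reconstruction of the authors' remark about the stability part, but the conjecture is specifically about whether \emph{uniqueness} also holds, and that gap is entirely unaddressed.
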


We do not venture in any conjecture in higher dimensions, however we record the following question. We recall the following terminology. We say that a metric space $X$ \textit{splits a line isometrically} if there is a metric space $Y$ such that $X$ is isometric to the metric product $\mathbb R\times Y$.

\begin{openq}
 Let $(M^3,g)$ be a complete Riemannian manifold with nonnegative Ricci curvature and Euclidean volume growth such that no blow-down splits a line isometrically. Understand whether there exists a measurable set $\mathcal{G}\subset (0,\infty)$ such that 
    \begin{equation}
        \lim_{r\to \infty}\frac{\mathcal{L}^1(\mathcal{G}\cap (r,2r))}{r}=1\, ,
    \end{equation}
    and, for each $V\in \mathcal{G}$, and every isoperimetric set $E_V\subset M^n$ with $\mathrm{vol}(E_V)=V$, the boundary of $E_V$ is a smooth strictly volume preserving stable constant mean curvature hypersurface.
\end{openq}

The examples that we construct in the proof of \cref{thm:counterexamples} have the drawback that, although for many large volumes the isoperimetric set with that volume is not unique, all the isoperimetric sets that we can find are isometric to each other.  

\begin{openq}\label{opq:notisometric}
  Understand whether there exists a complete Riemannian manifold $(M^n,g)$ with nonnegative Ricci curvature, Euclidean volume growth, quadratic curvature decay, and such that there exists a sequence $V_i\to \infty$ with the following property. For every $i\in\N$ there exist two distinct isoperimetric sets $E_i,E'_i\subset M$ with volume $V_i$ such that $\partial E_i$ and $\partial E_i'$ are not isometric.  
\end{openq}

By a well-known argument (see, e.g., \cite[Remarque 2.3.2]{BayleThesis} and \cite{Pansuprofile}), a positive answer to \cref{opq:notisometric} would follow from a positive answer to the following:

\begin{openq}
     Understand whether there exists a complete Riemannian manifold $(M^n,g)$ with nonnegative Ricci curvature, Euclidean volume growth, quadratic curvature decay, and such that there exists a sequence $V_i\to \infty$ with the following property. For every $i\in\N$ the isoperimetric profile of $M$ is not differentiable at the point $V_i$.
\end{openq}

We address the reader to \cite{Pansuprofile} for some related constructions in the compact case.

\subsection*{Acknowledgments}
The authors are very grateful to Elia Bru\`e for several conversations, and the generous help he provided them to complete this work.

The authors are grateful to Otis Chodosh for a stimulating email exchange that partly motivated the present paper, to Anton Petrunin for providing a proof of \cref{prop:reachbound}, and to Guido De Philippis for useful suggestions concerning \cref{appA}, and for having pointed out the reference \cite{AllardProceedings86}.

This project started during the Thematic Program on Nonsmooth Riemannian and Lorentzian Geometry organized at the Fields Institute in Fall 2022. The authors gratefully acknowledge the warm hospitality and the stimulating atmosphere.

G.A. acknowledges the hospitality of the IAS during his visit in December 2021 where he discussed topics related to this work with Elia Bruè. G.A. also acknowledges the financial support of the Courant Institute and the AMS-Simons Travel Grant.

M.P. is a member of INdAM - GNAMPA and he is partially supported by the PRIN Project 2022E9CF89 - PNRR Italia Domani, funded by EU Program NextGenerationEU.

D.S. was supported by the FIM ETH Z\"urich through a Hermann Weyl Instructorship and by the Fields Instutute through a Marsden Fellowship. He acknowledges SISSA for the stimulating atmosphere and the support during a visit in Fall 2023.

\section{The example: Proof of \cref{thm:counterexamples}}

This section is dedicated to the proof of \cref{thm:counterexamples}, that we restate below for the ease of readers.

\begin{thm}\label{thm:counterexamplesmain}
    There exists a complete smooth Riemannian surface $(M^2,g)$ with nonnegative Gaussian curvature, quadratic volume growth \eqref{eq:EVG}, and quadratic curvature decay \eqref{eq:QCD}, such that the following holds. There are disjoint intervals $L_n\subset (0,\infty)$ with $\inf L_n \to \infty$ and $|L_n|\to \infty$ as $n\to\infty$, such that for every $n\in\mathbb N$, and for every $V\in L_n$, there are isoperimetric sets with volume $V$ in $M$ which are neither strictly volume preserving stable, nor unique.
\end{thm}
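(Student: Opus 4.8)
I want to build a rotationally symmetric surface $(M^2, g = dr^2 + f(r)^2 d\theta^2)$ whose profile function $f$ is chosen so that the isoperimetric problem has the desired degeneracy on a divergent family of volume-scales. The basic idea is to make $f$ a concatenation of "cone pieces" $f(r) = \alpha_k (r - c_k) + \beta_k$ with decreasing slopes $\alpha_k \downarrow \alpha_\infty > 0$, interpolated smoothly with small nonnegative Gaussian curvature $-f''/f \geq 0$ concentrated in thin transition regions near radii $r_k \to \infty$. On each long cone piece the metric is (isometric to) a truncated flat cone of angle $2\pi\alpha_k$, so on that scale the surface looks asymptotically like two cones glued along a cylinder-like neck; choosing the cone slopes to decrease in a controlled geometric fashion (e.g.\ $\alpha_k = \alpha_\infty + 2^{-k}$) makes the curvature decay $|K| \lesssim r^{-2}$, the volume growth quadratic, and $\mathrm{AVR}>0$.

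**Key steps in order.** First I would fix the profile $f$ explicitly and verify the structural assumptions: completeness, $K = -f''/f \geq 0$, quadratic curvature decay \eqref{eq:QCD}, and quadratic volume growth \eqref{eq:EVG} with positive asymptotic volume ratio; all of these reduce to elementary estimates on the piecewise-linear-plus-smoothing construction of $f$. Second, I would analyze the isoperimetric sets of the model. On a rotationally symmetric surface with nonnegative curvature, isoperimetric regions are (by the classical theory, e.g.\ Morgan--Hutchings--Howards or Ritoré for surfaces of revolution) either disks bounded by circles, or unbounded regions — but here I want to arrange that for volumes $V$ in a window $L_n$ comparable to the area enclosed up to radius $r_n$, the isoperimetric region is a metric ball $\{r \le R(V)\}$ whose boundary circle $\{r = R(V)\}$ sits on a long cone piece. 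On such a cone piece of slope $\alpha$, the boundary circle has geodesic curvature $\kappa = f'/f = \alpha/(\alpha r + \text{const})$, and a direct second-variation computation shows that a circle on an exact flat cone is volume-preserving stable but \emph{not strictly} stable: the lowest eigenvalue of the Jacobi operator on mean-zero functions is exactly $0$, attained by the first Fourier mode $\cos\theta, \sin\theta$ (these correspond to translating the cone point, an honest Jacobi field on the exact-cone region). So on each scale $r_n$ I get a genuine degeneracy. Third, I would promote non-strict-stability to non-uniqueness: since the Jacobi field $\cos\theta$ generates (to first order) a volume-preserving deformation with vanishing second variation, and the cone piece is \emph{exactly} flat (so there are no higher-order curvature corrections killing the deformation), I can produce an honest one-parameter family of isoperimetric circles of the same volume — concretely, the translates of the center along the cone — or, failing an exact flat piece, use a foliation/implicit-function argument on the degenerate mode to exhibit at least two distinct minimizers. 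Finally, I would check that the windows $L_n$ can be taken disjoint with $\inf L_n \to \infty$ and $|L_n| \to \infty$: this follows because the $n$-th cone piece can be made arbitrarily long (its $r$-length is a free parameter), so the range of volumes $V$ for which the isoperimetric circle lands on that piece is an interval of length $\to\infty$.

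**Main obstacle.** The delicate point is step two and its interaction with step three: I must ensure that for the target volumes the isoperimetric region really is a centered ball with boundary on a \emph{long} cone piece (rather than, say, a region straddling a transition zone, or a non-simply-connected competitor), and simultaneously that the relevant piece is flat enough — ideally exactly flat over a macroscopic sub-interval — so that the zero-mode Jacobi field integrates to an actual family of isoperimetric sets, giving genuine non-uniqueness rather than mere infinitesimal degeneracy. Balancing "long enough flat piece to force the minimizer onto it and to kill higher-order obstructions" against "curvature still decaying like $r^{-2}$ globally" is the heart of the construction; I expect to handle it by making the cone pieces occupy, say, $[r_n, 10 r_n]$ with all curvature crammed into $[10 r_n, 10 r_n + 1]$-type transitions, so that the flat portion is both scale-invariantly large and compatible with \eqref{eq:QCD}. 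A secondary subtlety is verifying, via the calibration/profile comparison available in the rotationally symmetric setting, that no unbounded or exotic competitor beats the centered ball for these volumes — but this is standard for nonnegatively curved surfaces of revolution once $\mathrm{AVR}>0$ is known.
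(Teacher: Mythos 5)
There is a genuine gap, and it is fatal to the architecture of your construction. You claim that on an exact flat cone of angle $2\pi\alpha$ ($\alpha<1$), the centered circle $\{r=R\}$ has a zero eigenvalue of the volume-preserving Jacobi operator attained by the first Fourier mode. This is false. In the warped-product notation $g = dr^2 + \varphi(r)^2 d\theta^2$ with $\varphi(r)=\alpha r$ on the flat piece, the stability quantity for the test function $f=\cos\theta$ is proportional to
\begin{equation*}
\frac{1-(\varphi')^2+\varphi\varphi''}{\varphi^2}
= \frac{1-\alpha^2}{\alpha^2 r^2} > 0 ,
\end{equation*}
since $\varphi''\equiv 0$ on a flat piece and $\alpha<1$. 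So centered circles sitting on flat cone pieces are \emph{strictly} volume-preserving stable, exactly as they are on the exact cone (cf.\ \cite{MorganRitore02}, cited in the introduction). The ``Jacobi field generated by translating the cone apex'' does not exist: a flat cone with $\alpha<1$ is not $\mathbb{R}^2$, translating the apex is not an isometry, and translated circles are neither congruent to the centered circle nor volume-preserving to second order. Your mechanism therefore produces no degeneracy at all on the target scales. Your Step 3 inherits the error: there is no zero mode to integrate into a one-parameter family.

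The paper goes in the opposite direction. It concentrates a \emph{large} amount of concavity $\varphi''<0$ on the windows $J_n\subset I_n$ — quantitatively, $-\varphi''\ge 1/a_n$ with $a_n\sim r$ — so that $\varphi\varphi''\le -7/8$ there, forcing
\begin{equation*}
1-(\varphi')^2+\varphi\varphi'' \le 1 - (7/8)^2 - 7/8 < 0 ,
\end{equation*}
i.e.\ the Ricci term in the potential dominates and makes the centered circle strictly \emph{unstable} for those radii. Once the centered ball is not even a critical competitor, isoperimetric sets of the corresponding volumes (which still exist) cannot be rotationally symmetric, and the $\mathbb{S}^1$-action immediately yields a continuum of distinct minimizers, hence non-uniqueness and non-strict-stability. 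The paper's remaining work (Lemma \ref{lem:Technical}) is a careful bookkeeping showing this can be done while keeping $\varphi$ concave, $\varphi' \ge 7/8$ (so AVR $>0$ and quadratic volume growth), and $-\varphi''/\varphi \le 16/r^2$ (quadratic curvature decay), and while making $|L_n|\to\infty$. So the ``main obstacle'' you identify — forcing the minimizer onto a long \emph{flat} piece — is the wrong obstacle; the real point is to put enough curvature in the right windows while still satisfying the decay bound, which the paper resolves by spreading the curvature over intervals $I_n=[a_n,a_n+b_n]$ of length $b_n\lesssim a_n$.

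Incidentally, your secondary appeal to ``classical rotationally symmetric isoperimetry (Morgan--Hutchings--Howards, Ritoré)'' to show the minimizer is a centered ball would actively work against you: in the regime where the construction has bite, the centered ball is \emph{not} isoperimetric, and the paper does not need to classify what the minimizer actually is. It only needs its existence (from \cite{AFP21}) and the symmetry group.
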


The examples will be warped products over $\mathbb S^1$. We will show that it is possible to choose the warping function in such a way that balls centered at the origin of the warped product do not have volume preserving stable boundaries and all the other requirements are met. In particular, balls centered at the origin cannot be isoperimetric sets for their enclosed volume. The fact that isoperimetric sets are not unique and not strictly volume preserving stable will follow immediately by the existence of an $\mathbb S^1$-symmetry. We will need the following elementary lemma, whose proof is postponed at the end of the section.

\begin{lemma}\label{lem:Technical}
    Let $(a_n),(b_n)$ be two sequences of positive real numbers such that: 
\begin{itemize}
    \item[i)] $a_n$ is diverging, $a_{n+1}>a_{n}+b_{n}$, $a_{n}\geq b_n$;
    \item[ii)] there holds
    \[
    \sum_{n=0}^{\infty} \frac{b_n}{a_n}\leq \frac{1}{16}\, .
    \]
\end{itemize}
Denote $I_n := [a_n,a_n+b_n]$ and let $J_n\subset\subset I_n$. Then there exists a smooth concave function $\varphi:[0,\infty)\to \mathbb R$, with $\varphi(r)=r$ in a neighborhood of $0$, such that for every $n$ there holds
\begin{equation}\label{eqn:ControlAVRStatement}
    \varphi'(r)\geq \frac{7}{8}, \qquad \varphi(r) \geq \frac{7}{8}r\, .
    \end{equation}
\begin{equation}
|\varphi''(r)|\leq \frac{2}{a_n}\quad \forall r\in I_n\, , \quad |\varphi''(r)|\geq \frac{1}{a_n} \quad \forall r\in J_n\, , \quad \varphi''(r)\equiv 0 \quad \forall r\in (0,\infty)\setminus\bigcup_{n=0}^{\infty} I_n\, ,
\end{equation}
\begin{equation}\label{eqn:StabilityWrong}
\frac{\varphi(r)\varphi''(r)-(\varphi'(r))^2+1}{\varphi^2(r)}<0 \quad \forall r\in J_n\, ,
\end{equation}
\begin{equation}\label{eqn:CurvatureDecay}
-\frac{\varphi''(r)}{\varphi(r)}\leq \frac{16}{r^2} \quad \forall r\in (0,\infty)\, .
\end{equation}
\end{lemma}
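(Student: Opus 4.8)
The plan is to prescribe $\varphi''$ as a sum of small nonpositive bumps, one supported in each interval $I_n$, and then recover $\varphi$ by integrating twice. For each $n$ pick a smooth function $\psi_n:[0,\infty)\to[0,1]$ with $\psi_n\equiv 1$ on $\overline{J_n}$ and $\supp\psi_n\subset(a_n,a_n+b_n)$; this is possible because $\overline{J_n}$ is a compact subset of the open interval $(a_n,a_n+b_n)$. Since $a_{n+1}>a_n+b_n$ the intervals $I_n$ are pairwise disjoint, and since $a_n\to\infty$ every bounded set meets only finitely many of them, so the family $\{\supp\psi_n\}_n$ is locally finite and
\[
h(r):=-\frac{3}{2}\sum_{n=0}^{\infty}\frac{1}{a_n}\,\psi_n(r)
\]
defines a smooth nonpositive function with $h\equiv 0$ on $(0,\infty)\setminus\bigcup_n I_n$. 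Set $\varphi'(r):=1+\int_0^r h(s)\de s$ and $\varphi(r):=\int_0^r\varphi'(t)\de t$. Then $\varphi$ is smooth, it is concave because $\varphi''=h\le 0$, and $\varphi(r)=r$ on $[0,a_0]$ since $\supp\psi_n\subset(a_0,\infty)$ for every $n$; moreover $\varphi''(r)=-\tfrac{3}{2a_n}$ for $r\in\overline{J_n}$ and $|\varphi''(r)|\le\tfrac{3}{2a_n}$ for $r\in I_n$, and because $\tfrac{1}{a_n}\le\tfrac{3}{2a_n}\le\tfrac{2}{a_n}$ this gives all three pointwise bounds on $\varphi''$.

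Next I would verify the first-order bounds in \eqref{eqn:ControlAVRStatement}. Since $0\le\psi_n\le1$ and $\supp\psi_n$ has Lebesgue measure at most $b_n$, we have $\int_0^\infty\psi_n(s)\de s\le b_n$; as $h\le 0$ this yields, for every $r\ge 0$,
\[
\varphi'(r)\ge 1-\frac{3}{2}\sum_{n=0}^{\infty}\frac{b_n}{a_n}\ge 1-\frac{3}{2}\cdot\frac{1}{16}=\frac{29}{32}>\frac78,
\]
using hypothesis (ii), and also $\varphi'(r)\le 1$ everywhere. Integrating gives $\varphi(r)\ge\tfrac78 r$. This is the only place where assumption (ii) enters, and the amplitude constant $\tfrac32$ is chosen just large enough to force \eqref{eqn:StabilityWrong} below while still keeping $\varphi'\ge\tfrac78$.

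It remains to check the two scale-invariant inequalities. For $r\in I_n$ one has $r\le a_n+b_n\le 2a_n$ by hypothesis (i), and $\varphi(r)\ge\tfrac78 r\ge\tfrac78 a_n$; hence, for $r\in I_n$,
\[
-\frac{\varphi''(r)}{\varphi(r)}\le\frac{2/a_n}{(7/8)r}=\frac{16}{7a_nr}\le\frac{16}{r^2},
\]
where the last inequality holds because $r\le 2a_n\le 7a_n$, while off $\bigcup_n I_n$ the left-hand side vanishes; this is \eqref{eqn:CurvatureDecay}. For \eqref{eqn:StabilityWrong}, since $\varphi^2>0$ it is enough to make the numerator negative: for $r\in J_n$ we have $\varphi''(r)=-\tfrac{3}{2a_n}$, $\varphi(r)\ge\tfrac78 a_n$, and $(\varphi'(r))^2\ge 0$, so
\[
\varphi(r)\varphi''(r)-(\varphi'(r))^2+1\le-\frac{3}{2a_n}\cdot\frac{7}{8}a_n+1=-\frac{21}{16}+1=-\frac{5}{16}<0.
\]

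I do not expect any genuine obstacle in this argument; the only thing requiring care is bookkeeping the constants so that all the requirements hold simultaneously — in particular balancing the lower bound $|\varphi''|\ge\tfrac1{a_n}$ on $J_n$, which pushes the bump amplitudes up, against $\varphi'\ge\tfrac78$, which caps their total mass — together with the routine verifications that $\{\supp\psi_n\}_n$ is locally finite (so that $\varphi$ is genuinely smooth) and that a cutoff $\psi_n$ with the stated properties indeed exists.
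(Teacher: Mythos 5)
Your proof is correct and follows essentially the same approach as the paper: prescribe $\varphi''$ as a sum of negative bumps supported in the $I_n$, integrate twice, and verify the pointwise bounds directly. The paper works with an abstractly chosen bump of size between $\tfrac1{a_n}$ and $\tfrac2{a_n}$ and uses the bound $(\varphi')^2\ge(\tfrac78)^2$ in verifying \eqref{eqn:StabilityWrong}, whereas you fix the amplitude at $\tfrac{3}{2a_n}$ and can then afford to drop the $(\varphi')^2$ term — a cosmetic difference. One tangential remark to flag: your closing parenthetical suggests $\tfrac32$ is ``just large enough,'' but amplitude $\tfrac1{a_n}$ already suffices (as in the paper's argument, using $\varphi'\ge\tfrac78$), so there is actually slack; this does not affect the validity of your proof.
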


\begin{proof}[Proof of \cref{thm:counterexamplesmain}]
 Let $(a_n),(b_n)$ be two sequences of positive real numbers satisfying the assumptions of \cref{lem:Technical}. We can further assume that $(b_n)$ diverges. Let $I_n, J_n, \varphi$ be given by \cref{lem:Technical}. Note that, in particular,  $\inf I_n=a_n\to \infty$, and $|I_n|=b_n\to \infty$. 
 
 We consider the manifold $M$ given by the warped product $[0,\infty)\times \mathbb S^1$ endowed with the metric $g:=\mathrm{d}r^2+\varphi(r)^2g_{\mathbb S^1}$. The sectional curvatures of the rotationally symmetric warped products are computed in \cite[Section 4.2.3]{Petersenbook}. The fact that $\varphi$ is concave and \eqref{eqn:CurvatureDecay} holds imply that $\mathrm{Sect}\geq 0$ and $\mathrm{Sect}=O(r^{-2})$. Applying for example \cite[Corollary 5.9]{AFP21} (see also the earlier \cite{Ritore2d} and the more recent \cite[Theorem 1.4]{AntonelliPozzettaAlexandrov}) we get that isoperimetric sets exist for any volume on $M$. Moreover, since we also have $\varphi(r)\geq 7r/8$ (see \eqref{eqn:ControlAVR}), we get that $M$ has Euclidean volume growth \eqref{eq:EVG}.

    By
    \eqref{eqn:StabilityWrong}, we see that all the balls $B_r(0)$ with $r\in J_n$ have unstable boundaries with respect to volume-preserving variations. Hence, in particular, they are not isoperimetric. Indeed, since $\partial B_r(0)$ is isometric to a circle of length $2\pi \varphi(r)$, there exists $f \in C^\infty(\partial B_r(0))$ with $\int_{\partial B_r(0)} f =0$ and $f\not\equiv 0$ such that
    \[
    \int_{\partial B_r(0)} |\nabla f|^2 = \frac{1}{\varphi^2(r)} \int_{\partial B_r(0)} f^2\, ,
    \]
    where the gradient is understood with respect to the submanifold $\partial B_r(0)$.
    Recalling that $\sff_{\partial B_r(0)}=\frac{\varphi'(r)}{\varphi(r)}(g-\mathrm{d}r^2)$ on the tangent space along $\partial B_r(0)$, we find
    \[
        \int_{\partial B_r(0)} |\nabla f|^2 -\left( \left| \sff_{\partial B_r(0)} \right|^2 + \ric(\nu,\nu)\right)f^2 
    =  \int_{\partial B_r(0)} \left( \frac{1-(\varphi'(r))^2 + \varphi(r) \varphi''(r)}{\varphi^2(r)} \right)   f^2 <0\, ,
    \]
    due to \eqref{eqn:StabilityWrong}.
    
    Let $L_n \eqdef \{ |B_r(0)| \st r \in J_n\}$, where we denote by $|B_r(o)|$ the Riemannian volume measure of $B_r(o)$. Isoperimetric sets with volume $V\in L_n$ always exist as we argued above. However they cannot be balls centered at $o$ because $B_r(0)$ is not volume-preserving stable for $r\in J_n$. Since $M$ is rotationally symmetric, rotating isoperimetric sets with volume $V \in L_n$ shows that isoperimetric sets of such volumes are not (even locally) unique, and that they are not strictly volume preserving stable.
    
    Finally, notice that $\inf L_n\to \infty$. Moreover, since $(b_n)$ diverges, we can choose $J_n=[a'_n, b'_n]$ for suitable $a'_n, b'_n$ such that $b'_n-a'_n\to\infty$. Thus it is readily checked that $|L_n|=2\pi\int_{a_n'}^{b_n'}\varphi(r)\de r\to\infty$ as well, because $\varphi(r)\geq 7r/8$ (see \eqref{eqn:ControlAVR}).
\end{proof}

\begin{proof}[Proof of \cref{lem:Technical}]
    We choose $\varphi(r)=r$ in a right neighborhood of zero. We now explicitly construct $\varphi''$ in such a way $\varphi''(r)\leq 0$ everywhere and such that it is nonzero only on the intervals $I_n$. On every $I_n$ we take $\varphi''$ equal to a negative smooth function such that $|\varphi''|=-\varphi''\leq \frac{2}{a_n}$ on $I_n$ and $|\varphi''|=-\varphi''\geq \frac{1}{a_n}$ on $J_n$. In particular
    \[
    \int_{I_n} \varphi'' \geq -\frac{2b_n}{a_n}\, .
    \]
    In this way, for every $r\in (0,\infty)$, we have
    \[
    \varphi'(r)-\varphi'(0)=\int_0^r \varphi'' \geq -\sum_{n=0}^{\infty} \frac{2b_n}{a_n}\, ,
    \]
    hence, using $\varphi(0)=0$, and $\varphi'(0)=1$,
    \begin{equation}\label{eqn:ControlAVR}
    \varphi'(r)\geq \frac{7}{8}, \qquad \varphi(r) \geq \frac{7}{8}r\, .
    \end{equation}
    Let us now verify \eqref{eqn:CurvatureDecay}. The inequality is trivially satisfied outside $I_n$. For $r\in I_n$, since in particular $a_n\leq r\leq a_n+b_n$, and $a_n\geq b_n$, we have that $a_n\geq r/2$, and then
    \[
    -\frac{\varphi''(r)}{\varphi(r)} \leq \frac{16}{7a_nr} \leq \frac{16}{r^2}.
    \]
    Let us now verify \eqref{eqn:StabilityWrong}. For $r\in J_n$ we have 
    \[-\varphi''(r)\varphi(r)\geq \frac{7r}{8a_n} \geq \frac{7}{8}\, ,
    \]
    and thus 
    \[
    \varphi(r)\varphi''(r)-(\varphi'(r))^2+1 \leq -\frac{7}{8}-\left(\frac{7}{8}\right)^2+1<0\, ,
    \]
    which implies \eqref{eqn:StabilityWrong}.
\end{proof}

\section{Strict stability for many volumes}\label{sec:strictstab}

The goal of this section is to prove the strict stability part of \cref{thm:mainthm}. The statement will be important later when it comes to proving uniqueness for most large volumes.

We state below the precise result for the ease of readers.

\begin{thm}\label{thm:strictstab}
    Let $(M^n,g)$ be a smooth complete Riemannian manifold with $\mathrm{Ric}\ge 0$, Euclidean volume growth \eqref{eq:EVG}, and quadratic Riemann curvature decay \eqref{eq:QCD}. Assume that $(M^n,g)$ is not isometric to $\mathbb{R}^n$. Then there exist $\epsilon_M>0$ and a measurable set $\mathcal{G}\subset (0,\infty)$ such that 
    \begin{equation}
        \lim_{r\to \infty}\frac{\mathcal{L}^1(\mathcal{G}\cap (r,2r))}{r}=1\, ,
    \end{equation}
    and for each $V\in \mathcal{G}$ the first eigenvalue of the Jacobi operator for volume preserving variations on the boundary of every isoperimetric set $E_V\subset M$ of volume $V$ is at least $\eps_M V^{-2/n}$.
    In particular, $\partial E_V$ is a (smooth) strictly volume preserving stable constant mean curvature hypersurface.
\end{thm}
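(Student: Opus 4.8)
The plan is to argue by a blow-down / contradiction scheme, isolating the Ricci term of the stability operator as the only obstruction and then showing it is negligible on a set of volumes of density $1$ at infinity. First I would normalize: for a volume $V$, pass to the rescaled manifold $M_V = (M^n, r_V^{-1}\dist_g, p)$ in which an isoperimetric set $E_V$ of volume $V$ becomes an isoperimetric set of the fixed volume ${\rm AVR}(M)\,\omega_n$; the first eigenvalue of the Jacobi operator scales like $r_V^{-2}\sim V^{-2/n}$, so it suffices to produce a uniform lower bound $\lambda \ge \epsilon_M$ for the rescaled problem along any diverging sequence of \emph{good} volumes. Along any sequence $V_i \to \infty$, up to subsequence $M_{V_i}$ converges pGH and in $C^{1,\alpha}_{loc}$ away from the vertex to a blow-down cone $C(N)$, and by the convergence statement quoted before the theorem (\cref{lemma:hausconv}) the rescaled boundaries $\partial E_{V_i}$ converge in the Hausdorff sense to $\partial B_1(o)\subset C(N)$. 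Because the cone has quadratic curvature decay with link satisfying \eqref{eq:link}, $\partial B_1(o)$ is strictly stable by Lichnerowicz--Obata (the limiting Jacobi operator is $-\Delta_{\partial B_1(o)} - (n-1)$ on the sphere-like link, with a spectral gap on mean-zero functions), unless the cone is $\mathbb{R}^n$ — which is excluded since $(M^n,g)$ is not isometric to $\mathbb{R}^n$ by \cite{Coldingvolume}.

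The second step is to compare the rescaled Jacobi quadratic form $I_{\partial E_{V_i}}(f,f) = \int \left[|\nabla_\Sigma f|^2 - (\ric(\nu,\nu)+|\sff|^2) f^2\right]$ with the limiting one. By the $C^{1,\alpha}$ convergence of the hypersurfaces (which upgrades to control on the second fundamental forms via elliptic regularity for the CMC equation, the mean curvatures converging to that of $\partial B_1(o)$), the first-order term $\int|\nabla_\Sigma f|^2$ and the $|\sff|^2$ term converge; the only term that need not pass to the limit is $\int_{\partial E_{V_i}} \ric(\nu,\nu) f^2$, and this is exactly where \cref{thm:counterexamples} shows naive lower semicontinuity of the first eigenvalue fails. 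The decisive input, which I would invoke as \cref{lemma:Ricciiso0}, is that
\[
\frac{1}{r}\,\mathcal{L}^1\!\left(\left\{ V\in (r,2r) \ :\ \sup_{E_V}\ \fint_{\partial E_V}\ric(\nu,\nu)\,\d\vol_\Sigma \ \text{(suitably rescaled)} > \delta \right\}\right) \longrightarrow 0
\]
for every $\delta>0$; equivalently the Ricci term, integrated against the (bounded, by convergence) first eigenfunctions, goes to $0$ on average in $V$. This is proven by reversing the standard second-variation argument for concavity of the isoperimetric profile: concavity of $V\mapsto I(V)^{n/(n-1)}$ (or an appropriate power) forces its distributional second derivative to be a nonpositive measure, whose total mass on $(r,2r)$ controls $\int_{\partial E_V}\ric(\nu,\nu)$ on average, and one checks this mass is $o(1)$ after rescaling using Euclidean volume growth and \eqref{eq:QCD}.

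Granting \cref{lemma:Ricciiso0}, I define $\mathcal{G}$ to be the (density-$1$) set of volumes where the rescaled Ricci term is below a threshold $\delta$ chosen smaller than the spectral gap of $\partial B_1(o)$; then for $V\in\mathcal{G}$ large, for \emph{any} isoperimetric $E_V$ and any mean-zero $f$,
\[
I_{\partial E_{V}}(f,f) \ \ge\ \int_{\partial E_V}|\nabla_\Sigma f|^2 - |\sff|^2 f^2 \ -\ \delta\!\int_{\partial E_V} f^2 \ \ge\ (\lambda_{C(N)} - \delta - o(1))\!\int_{\partial E_V} f^2 \ \ge\ \tfrac{1}{2}(\lambda_{C(N)}-\delta)\!\int_{\partial E_V} f^2,
\]
where the first spectral gap comparison uses the $C^{1,\alpha}$ convergence of $(\partial E_V, \text{induced metric})$ to $(\partial B_1(o), g_N)$ and the convergence of $|\sff|^2$; here I must be slightly careful that the bound is \emph{uniform over all isoperimetric sets of volume $V$}, which is fine because a failure would produce, by compactness, a sequence of isoperimetric sets violating it, contradicting the blow-down argument. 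Unrescaling gives $\lambda_1 \ge \epsilon_M V^{-2/n}$ with $\epsilon_M := \tfrac14(\lambda_{C(N)}-\delta)$ — note $\lambda_{C(N)}$ is bounded below independently of which blow-down cone arises, since all links satisfy \eqref{eq:link} with a quantitative Obata gap depending only on $n$ and ${\rm AVR}(M)$. I expect the main obstacle to be precisely the proof of \cref{lemma:Ricciiso0}: making the ``reverse concavity'' argument rigorous requires handling the a priori merely-BV regularity of the isoperimetric profile, relating its second-derivative measure to $\int_{\partial E_V}(\ric(\nu,\nu)+|\sff|^2 - \text{(something))}$ via the second variation for \emph{every} isoperimetric set realizing a given volume, and extracting from Euclidean volume growth plus quadratic curvature decay the quantitative smallness on average — the subtlety being that pointwise the Ricci term does not vanish, so all the gain must come from the averaging in $V$.
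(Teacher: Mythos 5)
Your proposal follows the same strategy as the paper: normalize to a rescaled problem, identify the Ricci term of the Jacobi operator as the sole obstruction to lower semicontinuity of the first eigenvalue, control its boundary average on a density-one set of volumes by reversing the second-variation argument for concavity of the isoperimetric profile (this is indeed \cref{lemma:Ricciiso0}), and conclude by a blow-down compactness/contradiction argument exploiting the Lichnerowicz--Obata gap on the cross-sections. The high-level architecture and the choice of key lemma are both correct.

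However, one step in your chain of inequalities has a real gap. You pass directly from the conclusion of \cref{lemma:Ricciiso0}, which is an $L^1$-average bound of the form $\fint_{\partial E_V}\ric(\nu,\nu)\lesssim\delta$ (scale-invariantly), to the estimate $\int_{\partial E_V}\ric(\nu,\nu)\,f^2\le\delta\int_{\partial E_V}f^2$. This does not follow: $\ric(\nu,\nu)$ is only small \emph{on average over $\partial E_V$}, not pointwise, and $f^2$ is not constant, so the test function $f^2$ could in principle concentrate exactly where $\ric(\nu,\nu)$ is large. Closing this requires two more inputs that the paper supplies in its Step 3: the quadratic curvature decay \eqref{eq:QCD} gives a scale-invariant $L^\infty$ bound on $\ric(\nu,\nu)$ along $\partial E_V$, which interpolated against the $L^1$ smallness yields $L^q$ smallness for every finite $q$; and a uniform Sobolev inequality on the rescaled boundaries $\partial E_V$ (itself derived from the diameter, injectivity radius, and two-sided curvature bounds established along the $C^{1,\alpha}$ convergence) gives $L^p$ compactness of the normalized almost-eigenfunctions for $p>2$. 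Only after combining $L^q$ smallness of the Ricci term with $L^p$ convergence of $f_i^2$ does $\int h_i f_i^2$ pass to the correct limit. Your remark that ``all the gain must come from the averaging in $V$'' is right in spirit, but you also need the averaging over $x\in\partial E_V$ against $f^2$ to behave, and that is precisely where the $L^\infty$ input from \eqref{eq:QCD} enters and cannot be dispensed with.

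A secondary, smaller point: you assert that the spectral gap $\lambda_{C(N)}-(n-1)$ is bounded below ``depending only on $n$ and ${\rm AVR}(M)$'' via a quantitative Obata theorem. This is plausible but not what the paper uses, and it would require a quantitative almost-rigidity statement converting a volume defect of the link into a spectral gap. The paper instead argues more softly: since the set of cross-sections of blow-downs of the \emph{fixed} manifold $(M,g)$ is compact in the measured Gromov--Hausdorff topology and none of them is the round sphere (otherwise $M\cong\mathbb R^n$), the gap $\epsilon_M$ exists by compactness and the continuity of $\lambda_1$ under mGH convergence with lower Ricci bounds, but it depends on $M$, not merely on $n$ and ${\rm AVR}(M)$.
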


There are three intermediate steps for the proof of \cref{thm:strictstab}, corresponding to the following statements: 

\begin{lemma}\label{lemma:hausconv}
    Let $(M^n,g)$ be a smooth complete Riemannian manifold with $\mathrm{Ric}\ge 0$, Euclidean volume growth \eqref{eq:EVG}, and quadratic Riemann curvature decay \eqref{eq:QCD}, and assume that $M$ is not isometric to $\R^n$. Let $p\in M$ be a fixed base point. For every $\epsilon>0$ there exists $V_0=V_0(\epsilon)>0$ such that for every $V>V_0$ and for every isoperimetric set $E_V\subset M$ of volume $V$, setting 
    \begin{equation}\label{eq:rV}
r_V:=\left(\rm{AVR}(M) \, \omega_n\right)^{-\frac{1}{n}}V^{\frac{1}{n}},
    \end{equation}   
    the following hold:
    \begin{equation}\label{eq:Hausconv}
        \d_{\rm{H}}(B_{r_V}(p),E_V)\le \epsilon r_V\, ,\quad \d_{\rm{H}}(\partial B_{r_V}(p),\partial E_V)\le \epsilon r_V\, ,
    \end{equation}
    where $\d_{\rm{H}}$ denotes Gromov--Hausdorff distance.
\end{lemma}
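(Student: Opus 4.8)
The plan is to argue by contradiction and compactness, exploiting the fact that, under the stated hypotheses, every pointed blow-down of $(M^n,g)$ is a metric measure cone $(C(N),\dist_C,\haus^n)$ whose unique isoperimetric region of volume $\mathrm{AVR}(M)\,\omega_n$ is the unit ball $B_1(o)$ centered at the vertex (by \cite{MorganRitore02}, using that the link $N$ satisfies \eqref{eq:link}; the degenerate case where the cone is $\mathbb R^n$ is excluded because then $M$ itself would be $\mathbb R^n$ by \cite{Coldingvolume}). Suppose the conclusion fails: there is $\epsilon_0>0$, a sequence $V_i\to\infty$, and isoperimetric sets $E_{V_i}\subset M$ of volume $V_i$ for which at least one of the two Hausdorff estimates in \eqref{eq:Hausconv} fails at scale $\epsilon_0 r_{V_i}$. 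Rescale: consider $M_{V_i}=(M^n, r_{V_i}^{-1}\dist_g, p)$, in which $\widetilde E_i := E_{V_i}$ has rescaled volume exactly $\mathrm{AVR}(M)\,\omega_n$ and is still isoperimetric (the isoperimetric property is scale invariant). Up to a subsequence, $M_{V_i}$ converges in pointed measured Gromov--Hausdorff sense, and in $C^{1,\alpha}_{\mathrm{loc}}$ away from the vertex, to one of the blow-down cones $C(N)$; note $p\mapsto o$.

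The core step is then the stability/compactness of the isoperimetric problem along this converging sequence: the sets $\widetilde E_i$ converge, in $L^1_{\mathrm{loc}}$ and in the Hausdorff sense together with their boundaries, to an isoperimetric region of volume $\mathrm{AVR}(M)\,\omega_n$ in the limit cone $C(N)$. This uses (i) a uniform diameter/boundedness bound for $\widetilde E_i$ — so that no volume or perimeter escapes to infinity — which follows from the Euclidean-volume-growth lower bound on balls together with the density estimates for isoperimetric boundaries; (ii) uniform perimeter bounds and lower semicontinuity of perimeter under mGH convergence, giving a limit competitor; (iii) a matching upper bound showing the limit is itself isoperimetric, via the continuity of the isoperimetric profiles of $M_{V_i}$ converging to that of $C(N)$ (the profile of $C(N)$ at volume $\mathrm{AVR}(M)\,\omega_n$ equals $P(B_1(o))$); and (iv) the density estimates again to upgrade $L^1$-convergence of the sets to Hausdorff convergence of the sets and of their topological boundaries, ruling out the formation of vanishing "pieces" or the disappearance of boundary. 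This is precisely the content for which we may invoke \cite{AntonelliPasqualettoPozzettaSemola1}. By the uniqueness of the isoperimetric region in $C(N)$, the limit must be exactly $B_1(o)$. Hence $\widetilde E_i \to B_1(o)$ and $\partial\widetilde E_i\to\partial B_1(o)$ in the Hausdorff sense, i.e. $\dist_{\mathrm H}(B_1(o),\widetilde E_i)\to 0$ and $\dist_{\mathrm H}(\partial B_1(o),\partial \widetilde E_i)\to 0$. Since in $M_{V_i}$ the ball $B_{r_{V_i}}(p)$ becomes the unit ball converging to $B_1(o)$ as well, both rescaled quantities $r_{V_i}^{-1}\dist_{\mathrm H}(B_{r_{V_i}}(p),E_{V_i})$ and $r_{V_i}^{-1}\dist_{\mathrm H}(\partial B_{r_{V_i}}(p),\partial E_{V_i})$ tend to $0$, contradicting the choice of $\epsilon_0$. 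This proves the lemma.

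The main obstacle, and the only genuinely nontrivial input, is step (ii)--(iv) above: establishing the full Hausdorff convergence of the isoperimetric sets and of their boundaries to $B_1(o)$ in a converging sequence of (collapsed-at-the-vertex, non-smooth-in-the-limit) spaces. The delicate points are the uniform diameter bound preventing loss of mass at infinity — which requires combining the $\mathrm{AVR}$ lower density bound with uniform perimeter control and the isoperimetric inequality on $M$ — and the passage from $L^1$-convergence of characteristic functions to Hausdorff convergence of the reduced/topological boundaries, which relies on uniform interior and exterior volume density estimates at boundary points (scale-invariant, hence stable under the rescaling $M_{V_i}$). All of this is available in the cited references; the remaining arguments (scale invariance of isoperimetry, convergence of isoperimetric profiles, identification of the limit via \cite{MorganRitore02}) are standard.
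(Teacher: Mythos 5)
Your proposal is correct in spirit and reaches the right conclusion, but it takes a genuinely different route from the paper, and it leaves implicit the one new ingredient the paper actually proves. The paper's proof is a short reduction: it observes that the stated Hausdorff convergence is exactly the content of two theorems from the authors' prior work, \cite[Theorem 1.2]{AntonelliPasqualettoPozzettaSemola2} and \cite[Theorem 4.23]{AntonelliPasqualettoPozzettaSemola1}, \emph{provided} no blow-down of $M$ splits a line; essentially all of the paper's argument is then devoted to verifying that hypothesis, using that the cross-sections of blow-downs are $C^{1,\alpha}$ Riemannian manifolds (by quadratic curvature decay) and that a $C^{1,\alpha}$ spherical suspension is necessarily the round sphere, so a splitting blow-down would force $C(N)=\R^n$ and hence $M=\R^n$ by volume rigidity. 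You instead sketch the compactness/contradiction argument that sits behind those cited theorems — rescale, extract a mGH-convergent subsequence, pass isoperimetric sets to the limit, identify the limit via Morgan--Ritor\'e. That is a valid outline, but the "delicate points" you flag (diameter/location bound, lower semicontinuity and matching upper bound on the profile, and upgrading $L^1$ to Hausdorff convergence of sets and boundaries) are exactly what \cite[Theorem 4.23]{AntonelliPasqualettoPozzettaSemola1} delivers, so you are not bypassing that machinery so much as restating it.

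The one genuine gap: you never address the possibility that the limit cone could split a line while still satisfying \eqref{eq:link}, which is what the paper actually proves cannot happen. If $C(N)$ split a line, the apex ceases to be distinguished, translates of balls give a family of isoperimetric regions, the Morgan--Ritor\'e uniqueness statement no longer pins down a single competitor, and nothing forces the rescaled isoperimetric sets to accumulate near $p$ rather than drifting (with bounded diameter) along the splitting direction. The condition $\mathrm{vol}(N)<\mathrm{vol}(\mathbb S^{n-1})$, which you use to exclude $\R^n$, does \emph{not} by itself exclude a splitting cone with a non-round suspension link. You need the stronger observation — available here because of quadratic curvature decay — that the cross-section is a $C^{1,\alpha}$ manifold whose regular points force any suspension structure to be the round sphere. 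Add that argument and your proof closes.
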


\begin{prop}\label{prop:secondffstable}
    Let $(M^n,g)$ be a smooth complete Riemannian manifold with $\mathrm{Ric}\ge 0$, Euclidean volume growth \eqref{eq:EVG}, and quadratic Riemann curvature decay \eqref{eq:QCD}. Then, for every $\epsilon>0$ there exists $V_0=V_0(\epsilon)>0$ such that for every $V>V_0$ any isoperimetric set $E_V\subset M$ with volume $V$ is smooth and satisfies
\begin{equation}\label{eqn:ControlSecondForm}
 \sup_{x \in \partial E_V} \left| V^{\frac2n} \, |\sff_{\partial E_V}|^2(x) - (n-1)(\omega_n {\rm AVR}(M))^{\frac2n} \right| \le \epsilon\, ,
\end{equation}
where $\sff_{\partial E_V}$ is the second fundamental form of $\partial E_V$.
\end{prop}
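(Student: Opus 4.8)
The plan is to argue by contradiction and to blow down. If $(M^n,g)$ is isometric to $\mathbb R^n$ the estimate is an explicit computation, since isoperimetric sets are Euclidean balls of radius $r_V$ and ${\rm AVR}(M)=1$; hence we may assume $M$ is not isometric to $\mathbb R^n$ and apply \cref{lemma:hausconv}. Suppose the conclusion fails: there are $\epsilon_0>0$, volumes $V_i\to\infty$ and isoperimetric sets $E_{V_i}\subset M$ of volume $V_i$ with
\[
\sup_{\partial E_{V_i}}\Big|\,V_i^{2/n}\,|\sff_{\partial E_{V_i}}|^2-(n-1)(\omega_n{\rm AVR}(M))^{2/n}\,\Big|\ \ge\ \epsilon_0\, .
\]
I would rescale to $M_{V_i}=(M^n,r_{V_i}^{-1}\d_g,p)$, i.e.\ replace $g$ by $g_i:=r_{V_i}^{-2}g$, so that $E_{V_i}$ has volume $\omega_n{\rm AVR}(M)$ in $M_{V_i}$. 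Since $|\sff|^2$ scales as the inverse square of the metric and $V_i^{2/n}=(\omega_n{\rm AVR}(M))^{2/n}r_{V_i}^2$, the failure hypothesis is equivalent to
\[
\sup_{\partial E_{V_i}}\big|\ |\sff_{\partial E_{V_i}}|^2_{g_i}-(n-1)\ \big|\ \ge\ \epsilon_1:=\frac{\epsilon_0}{(\omega_n{\rm AVR}(M))^{2/n}}>0\, .
\]
Up to a subsequence, $M_{V_i}$ converges in the pointed Gromov--Hausdorff sense, and in $C^{1,\alpha}_{\rm loc}$ away from $p$, to a cone $C(N)=([0,\infty)\times N,\d r^2+r^2g_N)$ with $g_N\in C^{1,\alpha}$; by \cref{lemma:hausconv}, $\partial E_{V_i}$ is $o(1)$-Hausdorff close to $\partial B_1(p)\subset M_{V_i}$, hence Hausdorff-converges to $\partial B_1(o)$, the distance sphere of radius $1$ centered at the vertex of $C(N)$.

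The heart of the argument is a uniform $C^{2,\alpha}$ estimate for $\partial E_{V_i}$ on the annulus $A:=B_2(p)\setminus B_{1/2}(p)\subset M_{V_i}$. By \eqref{eq:QCD} and scaling, $|\mathrm{Riem}_{g_i}|\le C(n)$ on $A$ uniformly in $i$; by Bishop--Gromov and \eqref{eq:EVG}, $\mathrm{vol}_{g_i}(B_s(x))\ge{\rm AVR}(M)\,\omega_ns^n$ for $x\in A$ and $s\le1$. By Anderson's theorem, $A$ is therefore covered by harmonic charts of a uniform size in which $g_i$ has uniformly bounded $C^{1,\alpha}$ norm. Isoperimetric boundaries of volume comparable to $\omega_n{\rm AVR}(M)$ in $M_{V_i}$ are uniform almost-minimizers of the perimeter, and $\partial E_{V_i}$ is Hausdorff-close to the smooth hypersurface $\partial B_1(p)$; Allard's regularity theorem then gives, for $i$ large, that $\partial E_{V_i}\cap A$ has empty singular set and is, in those charts, the graph of a function of small $C^{1,\alpha}$ norm, with uniformly bounded constant mean curvature $H_i$. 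These graphing functions solve the constant mean curvature quasilinear elliptic equation with respect to $g_i$, whose coefficients are controlled by the $C^{1,\alpha}$ norm of $g_i$ (hence uniformly in $i$), so Schauder estimates bound them in $C^{2,\alpha}$ uniformly.

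Finally, combining the $C^{1,\alpha}_{\rm loc}$ convergence $g_i\to g_{C(N)}$ with this uniform $C^{2,\alpha}$ bound, I would extract a further subsequence along which $\partial E_{V_i}$ converges in $C^2$ to a hypersurface of $C(N)$, which by the Hausdorff convergence must be $\partial B_1(o)$. On any metric cone one has $\hess\, d(\cdot,o)=d(\cdot,o)^{-1}(g-\d d\otimes\d d)$, so $\partial B_1(o)$ is totally umbilical with all principal curvatures equal to $1$, whence $|\sff_{\partial B_1(o)}|^2\equiv n-1$. Hence $|\sff_{\partial E_{V_i}}|^2_{g_i}\to n-1$ uniformly on $\partial E_{V_i}$, contradicting the lower bound $\sup_{\partial E_{V_i}}\big|\,|\sff_{\partial E_{V_i}}|^2_{g_i}-(n-1)\,\big|\ge\epsilon_1$; the same Allard input also yields that $\partial E_V$ is smooth for every large $V$, which gives the remaining assertion of the statement.

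The step I expect to be the main obstacle is precisely the upgrade from the $C^{1,\alpha}$ asymptotics of $(M^n,g)$ at infinity --- all that \eqref{eq:QCD} and \eqref{eq:EVG} grant --- to $C^2$ convergence of the rescaled isoperimetric boundaries. This cannot come from a single soft compactness statement (an $L^2$-type almost-rigidity would not yield the sup-bound \eqref{eqn:ControlSecondForm}); one must interlace the scale-invariant curvature bound, the noncollapsing, Anderson's harmonic coordinates, Allard's theorem and Schauder theory for the constant mean curvature equation.
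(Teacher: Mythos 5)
Your proposal is correct and follows essentially the same route as the paper: blow down, use the Hausdorff proximity of $\partial E_{V_i}$ to the unit sphere of the limit cone, obtain uniform harmonic charts from Anderson's theorem, apply Allard's regularity to write the boundary as a $C^{1,\alpha}$ graph, upgrade to $C^{2,\alpha}$ via Schauder for the constant mean curvature equation, and pass to a $C^{2}$ limit which is the totally umbilical unit sphere of the cone. The paper packages exactly these ingredients into \cref{thm:HarmonicRadius}, \cref{prop:AdaptEpsRegularity}, \cref{C1gamma}, and \cref{thm:SecondeFormeConvergono}, so the argument is the same in substance; your separate treatment of the case $M\cong\mathbb R^n$ is a harmless extra remark that the paper handles implicitly.
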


\begin{prop}\label{lemma:Ricciiso0}
 Let $(M^n,g)$ be a complete smooth Riemannian manifold with $\ric\ge 0$, and Euclidean volume growth \eqref{eq:EVG}. For every $\epsilon>0$ there exists a measurable set $\mathcal{G}_{\epsilon}\subset (0,\infty)$ such that 
   \begin{equation}
        \lim_{r\to \infty}\frac{\mathcal{L}^1(\mathcal{G}_{\epsilon}\cap (r,2r))}{r}=1\, ,
    \end{equation}
 and for every volume $V\in \mathcal{G}_{\epsilon}$ and every isoperimetric set $E_V\subset M$ with volume $V$ (if there exists one), it holds
\begin{equation}\label{eq:estRiccistab}
    V^{\frac{2}{n}} \fint_{\partial E_{V}} \ric(\nu_{\partial E_V},\nu_{\partial E_V}) \le \epsilon ,
\end{equation}
where $\nu$ denotes the choice of a unit normal.
\end{prop}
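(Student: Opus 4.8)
The plan is to run the second-variation computation that yields the concavity of the isoperimetric profile $I_M$ of $(M^n,g)$, but to \emph{keep} the Ricci term that is usually discarded, and then to observe that this term is controlled by the second derivative of a concave, essentially linear function of the volume, hence negligible on average. Set $p:=\tfrac{n}{n-1}$ and $Y:=I_M^{p}$. I will use the standard facts that an isoperimetric set has boundary of constant mean curvature $H$, smooth away from a closed singular set of dimension $\le n-8$, and that $I_M(V)/V^{(n-1)/n}$ lies between two positive constants --- bounded above by Bishop--Gromov (comparing with a metric ball $B_\rho(p)$), and below by the sharp isoperimetric inequality valid under $\ric\ge0$ and Euclidean volume growth. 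In particular $Y$ is concave (this will also reappear from the computation below) and $c_1V\le Y(V)\le c_2V$ for suitable $0<c_1\le c_2$, so $Y'$ is nonincreasing with a positive finite limit; writing $-Y''=h\,\mathcal L^1+(\text{singular part})$ with $h\ge0$, the absolutely continuous density satisfies $Y''(V)=-h(V)$ for a.e.\ $V$ and $\int_a^\infty h\,\de V\to0$ as $a\to\infty$.

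First I would fix a volume $V$ at which $I_M$ (equivalently $Y$) is twice differentiable --- a full measure set, by Alexandrov's theorem --- and an arbitrary isoperimetric set $E_V$, with boundary $\Sigma:=\partial E_V$. Flowing (the regular part of) $\Sigma$ at unit normal speed produces a smooth family with volume $V(t)$ and perimeter $P(t)$ obeying $V'(t)=P(t)$, $P'(0)=H\,P(E_V)$, and the second-variation identity $P''(0)=\int_\Sigma\bigl(H^2-|\sff|^2-\ric(\nu,\nu)\bigr)$. Since $t\mapsto P(t)-I_M(V(t))$ is nonnegative and vanishes at $t=0$, the first-order condition forces $H=I_M'(V)$ (so that every isoperimetric set of this volume has the same mean curvature), and the second-order condition forces $P''(0)\ge I_M''(V)\,P(E_V)^2+H^2P(E_V)$. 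Equating the two expressions for $P''(0)$, inserting the pointwise bound $|\sff|^2\ge\frac{H^2}{n-1}$, and dividing by $P(E_V)=I_M(V)$ gives the estimate
\[
\fint_\Sigma\ric(\nu,\nu)\ \le\ -I_M''(V)\,I_M(V)-\frac{(I_M'(V))^2}{n-1}\, .
\]
A short computation with $Y=I_M^{p}$, in which the two terms proportional to $(I_M')^2$ cancel because $p-1=\tfrac{1}{n-1}$, rewrites the right-hand side as $\tfrac{n-1}{n}\,(-Y''(V))\,I_M(V)^{\frac{n-2}{n-1}}$. Multiplying by $V^{2/n}$ and using $I_M(V)\le c_2^{1/p}V^{(n-1)/n}$, there is $V_\ast=V_\ast(M)$ so that, for all $V\ge V_\ast$ at which $I_M$ is twice differentiable and every isoperimetric $E_V$,
\[
V^{\frac2n}\fint_{\partial E_V}\ric(\nu_{\partial E_V},\nu_{\partial E_V})\ \le\ C_M\,V\,h(V)\, ,
\]
with $C_M$ depending only on $n$ and $\mathrm{AVR}(M)$.

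To finish, for $\delta>0$ and $r>V_\ast$, Chebyshev's inequality together with $\int_r^\infty h\,\de V\to0$ yields
\[
\mathcal L^1\bigl(\{V\in(r,2r):C_M\,V\,h(V)>\delta\}\bigr)\le\mathcal L^1\bigl(\{V\in(r,2r):h(V)>\tfrac{\delta}{2rC_M}\}\bigr)\le\frac{2rC_M}{\delta}\int_r^{2r}h\,\de V=o(r)\, ,
\]
hence $\mathcal G_\epsilon:=\{V>V_\ast:\ I_M \text{ is twice differentiable at } V \text{ and } C_M\,V\,h(V)\le\epsilon\}$ has density $1$ at infinity, and on it the previous display gives \eqref{eq:estRiccistab}. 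The main obstacle is the rigorous justification of the second-order inequality of the first step at almost every volume: at a point of twice differentiability the comparison of the smooth function $P(t)$ with the second-order Taylor expansion of $I_M(V(t))$ is routine, but for $n\ge8$ one still has to deal with a possibly nonempty singular set of $\partial E_V$, by the usual arguments, exactly as in the classical proof that $I_M^{n/(n-1)}$ is concave. A second point not to overlook is that the inequality $|\sff|^2\ge H^2/(n-1)$ is indispensable: it is exactly what makes the $(I_M')^2$ terms cancel and leaves only $-Y''$; discarding $|\sff|^2$ would leave a surviving, nonvanishing term of order $V^{2/n}(I_M'(V))^2$ --- consistently with \cref{thm:counterexamples}, which shows that no pointwise substitute for the averaged bound can hold.
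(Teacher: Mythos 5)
Your proposal is correct and follows essentially the same strategy as the paper's proof: reverse the standard second-variation proof of concavity of $J:=I_M^{n/(n-1)}$, keeping the Ricci term, apply the Cauchy--Schwarz bound $|\sff|^2\ge H^2/(n-1)$ so that the $(I')^2$ terms cancel and only $-J''$ survives, and then use the fact that $J'_+$ converges to a finite limit (so $\int_r^{2r}|J''|\to 0$) to select a density-one set of volumes on which $V|J''(V)|$ is small. The only cosmetic differences are the parametrization — you compare $P(t)$ with $I(V(t))$ along the unit-speed normal flow and read off the first- and second-order conditions directly, whereas the paper builds the auxiliary comparison functions $\tilde I_V(v):=P(E_{\tau(v)})$ and $\tilde J_V:=\tilde I_V^{n/(n-1)}$ and uses $\tilde J_V\ge J$ with equality at $V$ — and the bookkeeping of the density-one set, where you decompose $-J''=h\,\mathcal L^1+(\text{singular part})$ and apply Chebyshev to $h$, while the paper works directly with the set $\mathcal F_\delta=\{v:\ J''(v)\ \text{exists and}\ 0\ge J''(v)v\ge-\delta\}$. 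Both handle the possibly singular boundary for $n\ge 8$ by the same standard remark, and both obtain the same final scaling $V^{2/n}\fint\ric(\nu,\nu)\lesssim -V\,J''(V)$.
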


Note that \cref{lemma:Ricciiso0} does not require the quadratic curvature decay \eqref{eq:QCD}, and it makes sense as soon as isoperimetric sets exist for sufficiently large volumes. However, if we assume that the curvature decays quadratically, the $L^1$ estimate for the Ricci curvature in the normal direction can be immediately improved to an $L^p$ estimate for every $1\le p<\infty$, up to a multiplicative constant independent of the volume $V$.
\medskip

We start by proving \cref{lemma:hausconv} below, as this will provide us with some relevant terminology for the proof of \cref{thm:strictstab}.

\begin{proof}[Proof of \cref{lemma:hausconv}]
 We claim that under the present assumptions $(M,g)$ does not split any line, and no blow-down of $(M,g)$ splits any line. Given the claim,
 the statement follows from \cite[Theorem 1.2]{AntonelliPasqualettoPozzettaSemola2}, and \cite[Theorem 4.23]{AntonelliPasqualettoPozzettaSemola1}. The rest of the proof is aimed at establishing the claim.
\medskip

It is sufficient to prove that no blow-down of $(M,g)$ splits any line, as if $(M,g)$ splits a line then every blow-down must also split a line. 

Since $(M,g)$ has Euclidean volume growth, every blow-down is a metric cone $C(Z)$ over an $(n-1)$-dimensional metric space $(Z,\dist_Z)$. The latter comes from \cite{CheegerColding96}, see also the previous \cite{BandoKasueNakajima} for the same conclusion under the quadratic curvature decay assumption.
By the quadratic curvature decay, $(Z,\dist_Z)$ is a $C^{1,\alpha}$ Riemannian manifold for every $\alpha<1$. If $C(Z)$ splits a line then $Z$ is a spherical suspension over an $(n-2)$-dimensional metric space $(Z',\dist_{Z'})$. Since $(Z,\dist_Z)$ is a $C^{1,\alpha}$ Riemannian manifold, all points $z\in Z$ are regular, i.e., the tangent cone at each point is isometric to $\R^{n-1}$. In particular, the suspension points are regular and hence $(Z',\dist_{Z'})$ must be isometric to the standard sphere $\mathbb S^{n-2}$ endowed with a metric with constant sectional curvature $1$. Therefore $Z$ is isometric to the standard sphere $\mathbb S^{n-1}$ endowed with a standard metric with constant sectional curvature $1$, and hence $C(Z)$ is isometric to $\R^n$. By volume convergence and volume rigidity, see \cite{Coldingvolume}, $(M,g)$ is also isometric to $\R^n$, a contradiction.
\end{proof}

Before giving detailed proofs of \cref{prop:secondffstable} and \cref{lemma:Ricciiso0}, we discuss how to complete the proof of \cref{thm:strictstab} by taking them for granted.

\begin{proof}[Proof of \cref{thm:strictstab}]

    The proof will be divided in three main steps. In Step 1 we argue that the stability operators of the unit spheres have a uniform spectral gap among all blow-downs of $(M,g)$. In Step 2 we rely on \cref{lemma:hausconv} and \cref{prop:secondffstable} to show that for any sequence of volumes $V_i\to \infty$ and any sequence of isoperimetric sets $E_{i}$ such that $\vol(E_i)=V_i$ the boundaries $\partial E_i$ with the induced Riemannian metrics from the ambient $M$ converge in the $C^{1,\alpha}$ sense to a cross-section of a blow-down of $M$, up to rescaling. In Step 3 we show that, roughly speaking, the first eigenvalue of the stability operator is lower semicontinuous along any such converging sequence provided that the Ricci curvature term is sufficiently small. The proof will be completed with the help of \cref{lemma:Ricciiso0}, whose aim is exactly to control that Ricci curvature term for many volumes.
    \medskip

    \textbf{Step 1.} We claim that there exists $\epsilon_M>0$ such that for every cross-section $(N,g_N)$ of a blow-down $C(N)$ of $(M,g)$ the first non-zero eigenvalue of the Laplacian $\lambda_1(N)$ satisfies $\lambda_1(N)\ge n-1+2\epsilon_M$. 

    As we argued in the proof of \cref{lemma:hausconv} above, any cross-section of a blow-down of $(M,g)$ is a $C^{1,\alpha}$ Riemannian manifold $(N,g)$ with Ricci curvature bounded from below by $n-2$ in the sense of $\RCD$ spaces or, equivalently given the $C^{1,\alpha}$ regularity, in the sense of distributions. Moreover, no such cross-section is isometric to the standard sphere $\mathbb S^{n-1}$ with constant sectional curvature equal to $1$, otherwise $C(N)$ and hence $(M,g)$ would be isometric to $\R^n$, by \cite{Coldingvolume}. By the Lichnerowicz estimate for the spectral gap of the Laplacian, see for instance \cite[Theorem 4.22]{ErbarKuwadaSturm} for the setting of $\RCD$ spaces, the first non-zero eigenvalue of any such cross-section satisfies $\lambda_1(N)\ge n-1$. By Obata's rigidity theorem \cite{KettererObata}, the inequality is strict for each such $N$. Indeed, if not, $N$ would be a spherical suspension and we already argued that this is not possible under the present assumptions. The conclusion follows from a standard compactness argument based on the stability of the first eigenvalue of the Laplacian under (measured) Gromov-Hausdorff convergence with lower Ricci curvature bounds and on the compactness of the set of possible cross-sections of blow-downs of $(M,g)$ with respect to the same topology (see, e.g., \cite[Theorem 1.9.4]{AmbrosioHondastab}).
    
    \medskip

\textbf{Step 2.} Let $V_i\to \infty$ be a sequence of volumes and let $E_i\subset M$ be isoperimetric sets with $\vol(E_i)=V_i$. Recall the expression for $r_V$ that was introduced in \eqref{eq:rV}. We claim that with the metrics induced by the rescaled ambient spaces $(M^n,r_{V_i}^{-1}\d)$ the boundaries $\partial E_i$ converge in the Gromov-Hausdorff sense to a cross-section of a blow-down of $M$, up to the extraction of a subsequence. Moreover, the induced metrics on $\partial E_i$ have uniform two-sided sectional curvature bounds.
Hence they converge in $C^{1,\alpha}$ for every $\alpha<1$. The rest of this step will be aimed at establishing the Gromov-Hausdorff convergence with the induced metrics. 
\medskip

We first note that the uniform two-sided bounds for the sectional curvatures of the induced Riemannian metrics on $\partial E_i$ (after rescaling) follow from the quadratic curvature decay of $(M,g)$, the second estimate in \eqref{eq:Hausconv}, the convergence of the second fundamental forms from \cref{prop:secondffstable} and the Gauss equations. 
\smallskip

Up to the extraction of a subsequence that we do not relabel, $(M^n,r_{V_i}^{-1}\d,p)$ converge in the pointed Gromov-Hausdorff sense to a blow-down cone $C(N)$ of $(M,g)$. Moreover, the convergence is $C^{1,\alpha}$ for every $\alpha<1$ away from the vertex of the cone. We claim that $\partial E_i$ with the induced Riemannian metrics converge to the cross-section $(N,g_N)$ in the Gromov-Hausdorff sense.
We note that the $(n-1)$-dimensional measures of $\partial E_i$, i.e., their perimeters, computed with respect to the rescaled metrics as above, converge to $\mathcal{H}^{n-1}(N)$ by \cite[Theorem 4.23]{AntonelliPasqualettoPozzettaSemola1}. Moreover, from the forthcoming \cref{lem:diaminj} we infer that the boundaries $\partial E_i$ with the induced metrics have uniformly bounded diameters (after rescaling) and injectivity radii uniformly bounded away from $0$. Thanks to the discussion above they also have uniform two-sided bounds on the sectional curvature. In particular, up to the extraction of a subsequence that we do not relabel, by Cheeger's precompactness theorem they converge in the Gromov-Hausdorff sense to a Riemannian manifold $(N',g')$ where $g'$ is a $C^{1,\alpha}$ Riemannian metric for every $\alpha<1$, and the convergence is $C^{1,\beta}$ for every $\beta<1$. We claim that $(N',g')$ is isometric to $(N,g_N)$. Thanks to the Hausdorff convergence of $\partial E_i$ to $N$ from \cref{lemma:hausconv} we can find a $1$-Lipschitz surjective map $F:(N',g')\to (N,g_N)$. Moreover, we already argued above that
\begin{equation}
    \mathcal{H}^{n-1}(N')=\lim_{i\to\infty}\mathcal{H}^{n-1}(\partial E_i)=\mathcal{H}^{n-1}(N)\, .
\end{equation}
Above, it is understood that $\mathcal{H}^{n-1}(\partial E_i)$ is computed with respect to the rescaled metric on $M$. The well-known Lipschitz volume rigidity in Riemannian geometry (see e.g. \cite[Lemma 9.1]{BuragoIvanov10} and references therein) guarantees that $F$ is an isometry. Therefore $\partial E_i$ with the induced length distances Gromov-Hausdorff converge to $N$ thus completing the proof of the claim.
\medskip

\textbf{Step 3.} We claim that there exist $\epsilon>0$ and $V_0>0$ such that if $V>V_0$ and $V\in \mathcal{G}_{\epsilon}$, where the set $\mathcal{G}_{\epsilon}$ was introduced in the statement of \cref{lemma:Ricciiso0}, then the first eigenvalue (with respect to volume preserving variations) of the Jacobi operator on the boundary of each isoperimetric set $E_V\subset M$ with $\vol(E_V)=V$ is at least $\varepsilon_MV^{-2/n}$, where $\varepsilon_M$ has been introduced in Step 1 above. In particular, $E_V$ has strictly volume preserving stable boundary. 
The proof of the claim will complete the proof of \cref{thm:strictstab}.

We argue by contradiction, relying on the convergence and stability of Sobolev functions defined along Gromov-Hausdorff converging sequences of manifolds, see \cite{GigliMondinoSavare,Hondaelliptic,AmbrosioHondastab} for the relevant background. If the claim does not hold then there exists a sequence $V_i\to \infty$ and isoperimetric sets $E_i\subset M$ with $\vol(E_i)=V_i$ such that 
\begin{equation}\label{eq:Riccia0}
    \lim_{i\to \infty}V_i^{\frac{2}{n}}\fint_{\partial E_i}\ric(\nu_{\partial E_i},\nu_{\partial E_i})=0,
\end{equation}
and there are smooth functions $f_i:\partial E_i\to\R$ with $f_i$ not identically vanishing such that
\begin{equation}\label{eq:zeroav}
    \int_{\partial E_i}f_i\d P_{\partial E_i}=0\, ,
\end{equation}
and 
\begin{equation}\label{eq:nonstrict}
    \int_{\partial E_i}|\nabla_{\partial E_i}f_i|^2\d P_{\partial E_i}\le \int_{\partial E_i}\left[\ric(\nu_{\partial E_i},\nu_{\partial E_i})+|\sff_{\partial E_i}|^2+\frac{3}{2}\varepsilon_M\right]f_i^2\d P_{\partial E_i}\, ,
\end{equation}
for each $i\in \N$. Note that both \eqref{eq:Riccia0}, \eqref{eq:zeroav}, and \eqref{eq:nonstrict} are scale-invariant. We let $\Sigma_i:=\partial E_i$ endowed with the metric $g_i$ induced by the embedding into $(M^n,r_{V_i}^{-1}\d)$ and the induced Riemannian $(n-1)$-dimensional volume $\vol_i$. By Step 2, $(\Sigma_i,g_i,\vol_i)$ converge in $C^{1,\alpha}$ for every $\alpha<1$ to $(N,g_N,\vol_N)$, up to the extraction of a subsequence that we do not relabel. Here $N$ denotes the cross-section of a blow-down $C(N)$ of $(M,g)$. The conditions \eqref{eq:zeroav} and \eqref{eq:nonstrict} can be rephrased by saying that there exists a sequence of smooth functions $\bar{f}_i:\Sigma_i\to \R$ such that 
\begin{equation}\label{eq:scaledmeans}
    \int_{\Sigma_i}\bar{f}_i\d\vol_i=0\, ,\quad\quad \int_{\Sigma_i}{\bar{f}_i}^2\d \vol_i=1\, ,
\end{equation}
and 
\begin{equation}\label{eq:scalecontra}
    \int_{\Sigma_i}|\nabla \bar{f}_i|^2\d\vol_i\le \int_{\Sigma_i}h_i\bar{f}_i^2\d \vol_i\, ,
\end{equation}
where we set
\begin{equation}\label{eq:defgi}
    h_i:=\ric(\nu_{\Sigma_i},\nu_{\Sigma_i})+|\sff_{\Sigma_i}|^2+\frac{3}{2}\varepsilon_M\, ,
\end{equation}
for each $i\in\N$.

By \cref{prop:secondffstable}
\begin{equation}\label{eq:sffre}
    |\sff_{\Sigma_i}|^2\to (n-1), 
\end{equation}
uniformly as $i\to\infty$. Hence, by \eqref{eq:Riccia0} and the quadratic curvature decay \eqref{eq:QCD}, $h_i\to (n-1)+\frac{3}{2}\varepsilon_M$ in $L^q$ as $i\to\infty$ for every $1\le q<\infty$. By the quadratic curvature decay assumptions again and \eqref{eq:sffre}, the functions $h_i$ are uniformly bounded with respect to $i$. Hence, by \eqref{eq:scaledmeans} and \eqref{eq:scalecontra}, the functions $\bar{f}_i$ have uniformly bounded $H^{1,2}$ norms. By Step 2, a $(p,2)$-Sobolev inequality with uniform constants independent of $i\in\N$ holds along the sequence $(\Sigma_i,g_i)$, for every $p>1$, see also \cref{lem:diaminj} below. Hence for any $p>1$, up to the extraction of a subsequence that we do not relabel, the functions $\bar{f}_i$ converge in $L^p$ to a function $\bar{f}:N\to \R$. In particular, if we choose $p>2$, then by \eqref{eq:scaledmeans} we get that 
\begin{equation}
   \int_N\bar{f}\d\vol_N=0\, ,\quad \int_N\bar{f}^2\d\vol_N=1\, . 
\end{equation}
 Moreover, the combination of the $L^q$ convergence of $h_i\to (n-1)$ with the $L^p$ convergence of $\bar{f}_i\to \bar{f}$ yields that 
 \begin{equation}
     \int_{\Sigma_i}h_i\bar{f}_i^2\d \vol_i\to (n-1)\int_N\bar{f}^2\d\vol_N=n-1\, \quad \text{as $i\to\infty$}\, .
 \end{equation}
Taking into account the lower semicontinuity of the Dirichlet energy under $L^2$ convergence we get from \eqref{eq:scalecontra} that
\begin{equation}
    \int_N|\nabla \bar{f}|^2\d\vol_N\le n-1+\frac{3}{2}\varepsilon_M\, .
\end{equation}
This is a contradiction to the uniform spectral gap estimate that was obtained in Step 1. 
\end{proof}

\begin{remark}
    Regarding the argument in Step 1 above, we note that in the present setting, where two-sided uniform sectional curvature bounds are in force, the set of cross-sections is compact in the $C^{1,\alpha}$ topology and it is much more elementary to check that the spectrum of the Laplacian is stable in this case, see \cite[Chapter 10]{Petersenbook} for the relevant background.
\end{remark}

\cref{prop:secondffstable} is a consequence of the effective regularity theory for (almost) minimizers of the perimeter under two-sided curvature bounds that is discussed in \cref{appA} and \cref{lemma:hausconv}.

\begin{proof}[Proof of \cref{prop:secondffstable}]
    Arguing by contradiction, it is enough to prove the following statement. Let $V_i\to\infty$, and let $\widetilde E_i$ be isoperimetric sets with volume $V_i$. Then, up to subsequences, 
    \begin{equation}\label{eqn:ControlSecondFormSequence}
 \lim_{i\to\infty}\sup_{x \in \partial \widetilde E_i} \left| V_i^{\frac2n} \, |\sff_{\partial \widetilde E_i}|^2(x) - (n-1)(\omega_n {\rm AVR}(M))^{\frac2n} \right| =0\, .
\end{equation}

Denote $r_i:=(\mathrm{AVR}(M)\omega_n)^{-\frac 1n}V_i^{\frac 1n}$, and fix $0<\alpha'<\alpha<1$. Let $E_i$ denote the set $\widetilde E_i$ considered in the rescaled manifold $M_{i}:=(M^n,r_i^{-1}\mathrm{d}_g,p)$. Up to subsequences, let $(C(N),o)$ be the asymptotic cone that is the limit of $M_i$ in the pointed Gromov--Hausdorff sense, and in the $C^{1,\alpha}$-sense outside the tip $o$. Let $g^C$ be the limit metric metric on $C(N)$, and let $g_i$ denote the rescaled metric on $M_i$. We aim at proving the following assertion.
\smallskip

\textbf{Claim}. There are $\rho,\overline{C}>0$ such that for every $p\in \partial B_1^{C(N)}(o)$ there exists a chart $\varphi:B_\rho(0^n)\to C(N)$ centered at $p$ such that for every $\partial E_i\ni p_i\to p$ the following holds up to subsequences in $i$. There exist open sets $B_{\rho/2}(p)\subset \varphi(B_\rho(0^n))\subset U_i\subset C(N)$, and $B_{\rho/2}^{M_i}(p_i)\subset V_i\subset M_{i}$, and diffeomorphisms $F_i:U_i\to V_i$ with $F_i(p)=p_i$ for which:
    \begin{enumerate}
        \item It holds 
        \begin{equation}\label{eqn:MetricC1alphaConvergence}
        \|\varphi^*g^C-(F_i\circ\varphi)^*g_i\|_{C^{1,\alpha}(B_{\rho/2}(0^n))} \to 0\, .
        \end{equation}
        \item There exist $u,u_i:B_{\rho/2}(0^{n-1})\to \mathbb R$ such that
        \begin{equation}\label{eqn:GraphU}
            \varphi^{-1}(\partial B_1^{C(N)}(o))\cap B_{\rho/2}(0^n) = \{(x,u(x)):x\in B_{\rho/2}(0^{n-1})\}\cap B_{\rho/2}(0^n)\, ,
        \end{equation}
        \begin{equation}\label{eqn:AllardGraph}
            (F_i\circ\varphi)^{-1}(\partial E_i)\cap B_{\rho/2}(0^n) = \{(x,u_i(x)):x\in B_{\rho/2}(0^{n-1})\}\cap B_{\rho/2}(0^n)\, ,
        \end{equation}
        \begin{equation}\label{eqn:AllardBound}
            \|u_i\|_{C^{2,\alpha}(B_{\rho/4}(0^{n-1}))} \leq \overline C\, \quad \text{for every $i\in\N$}\, ,
        \end{equation}
        and
        \begin{equation}\label{eqn:Convergence}
            u_i\to u \qquad \text{in $C^{2,\alpha'}(B_{\rho/8}(0^{n-1}))$}\, .
        \end{equation}
    \end{enumerate}

    In particular, 
    \begin{equation}\label{eqn:SFFConvergenceFinally}
    \lim_{i\to\infty}\sup_{x\in\partial E_i}||\sff_{\partial E_i}|(x)-(n-1)| = 0\, .
    \end{equation}
    \smallskip

The proof the present proposition will be completed after the proof of the \textbf{Claim}. Indeed, \eqref{eqn:SFFConvergenceFinally} is equivalent to \eqref{eqn:ControlSecondFormSequence} after scaling.
    \smallskip
    
    In order to prove the \textbf{Claim} we rely on the results in \cref{appA}. Item (1) comes from the $C^{1,\alpha}$-convergence of $M_i$ to $C(N)$ outside the tip $o$. Moreover, \eqref{eqn:AllardGraph}, and \eqref{eqn:AllardBound} in Item (2) come from (the proof of) \cref{thm:SecondeFormeConvergono}, and Allard's result \cref{C1gamma}, see in particular \eqref{eqn:C2alphabound}. Recall that \cref{thm:SecondeFormeConvergono} can be applied in the setting of the present \cref{prop:secondffstable} due to \cref{rem:ApplicationToIsop}.  Finally \eqref{eqn:AllardBound} in item (2) implies precompactness in $C^{2,\alpha'}(B_{\rho/8}(0^{n-1}))$, so that $u_i\to \tilde u$ up to subsequences in $C^{2,\alpha'}(B_{\rho/8}(0^{n-1}))$. Since $\partial E_i\to \partial B_1^{C(N)}(o)$ in the Hausdorff sense, we infer that $\tilde u=u$, and thus \eqref{eqn:Convergence} is proved. Finally, \eqref{eqn:SFFConvergenceFinally} is a direct consequence of \eqref{eqn:MetricC1alphaConvergence}, and \eqref{eqn:Convergence}.
\end{proof}

The next Lemma is a consequence of the results in \cref{appA}, and it was already used in the proof of \cref{thm:strictstab} above.

\begin{lemma}\label{lem:diaminj}
    Let $(M^n,g)$ be a smooth complete Riemannian manifold with $\mathrm{Ric}\ge 0$, Euclidean volume growth \eqref{eq:EVG}, and quadratic Riemann curvature decay \eqref{eq:QCD}. There exists $V_0,C>0$ such that the following holds.
    Let $E_V$ be an isoperimetric set of volume $V>V_0$, and let $\Sigma:=\partial E_V$. Let $g_\Sigma$ be the Riemannian metric induced by $g$ on $\Sigma$. Then 
    \begin{equation}\label{eqn:DiamEInj}
    \mathrm{diam}(\Sigma,g_\Sigma) \leq CV^{1/n}\, , \qquad \mathrm{inj}(\Sigma,g_\Sigma) \geq CV^{1/n}\, .
    \end{equation}
\end{lemma}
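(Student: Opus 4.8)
The plan is to argue by contradiction and reduce the statement, after a scale-invariant reformulation, to the uniform local regularity of $\partial E_V$ established in the proof of \cref{prop:secondffstable}. Both bounds in \eqref{eqn:DiamEInj} are invariant under the rescaling $g\mapsto r_V^{-2}g$, so it suffices to produce, for every sequence $V_i\to\infty$ and every choice of isoperimetric sets $E_i$ with $\vol(E_i)=V_i$, uniform bounds for $\mathrm{diam}$ and $\mathrm{inj}$ of $\Sigma_i:=\partial E_i$ endowed with the metric $g_i$ induced by the rescaled ambient $M_i:=(M^n,r_i^{-1}\d_g,p)$, where $r_i:=(\mathrm{AVR}(M)\omega_n)^{-1/n}V_i^{1/n}$. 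Up to a subsequence, $M_i$ converges to a blow-down cone $(C(N),o)$ in the pointed Gromov--Hausdorff sense and in $C^{1,\alpha}_{\mathrm{loc}}$ away from $o$, and by \cref{lemma:hausconv} the sets $\Sigma_i$, viewed inside $M_i$, converge to $\partial B_1^{C(N)}(o)$ in the Hausdorff sense; in particular $\Sigma_i$ is eventually contained in the annulus $\{\tfrac12\le \d(\cdot,p)\le 2\}$ of $M_i$, and, being a bounded smooth hypersurface, it is compact and without boundary.

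First I would invoke the \textbf{Claim} in the proof of \cref{prop:secondffstable}, applied to this converging sequence. It provides radii $\rho,\overline C>0$ \emph{independent of} $i$ such that around every point of $\partial B_1^{C(N)}(o)$ there is a chart in which, for $i$ large, $\Sigma_i$ is a single $C^{2,\alpha}$-graph over a Euclidean ball of radius $\rho/8$ with norm bounded by $\overline C$ (see \eqref{eqn:AllardGraph}, \eqref{eqn:AllardBound}), and in which $g_i$ is $C^{1,\alpha}$-close to a fixed reference metric (see \eqref{eqn:MetricC1alphaConvergence}). Since $\partial B_1^{C(N)}(o)$ is compact, finitely many — say $K$ — such charts, with connected nerve, cover it and, by the Hausdorff convergence, cover $\Sigma_i$ for $i$ large. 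In each chart $\Sigma_i$ is a single connected graph, and the charts overlap over the connected set $\partial B_1^{C(N)}(o)$; hence $\Sigma_i$ is connected for $i$ large. The $C^{2,\alpha}$ bound on the graphs together with the $C^{1,\alpha}$ control on $g_i$ give a uniform bound $C_1=C_1(n,\rho,\overline C)$ for the $g_i$-diameter of each chart, so a chain of at most $K$ charts yields $\mathrm{diam}(\Sigma_i,g_i)\le (2K+1)C_1$ for $i$ large. Rescaling back gives the diameter bound in \eqref{eqn:DiamEInj}.

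For the injectivity radius I would combine three uniform bounds on $(\Sigma_i,g_i)$ with Cheeger's injectivity radius estimate. By the Gauss equations, $|\mathrm{Sect}_{g_i}|$ along $\Sigma_i$ is controlled by $|\mathrm{Riem}_{M_i}|$ on a neighborhood of $\Sigma_i$ and by $|\sff_{\Sigma_i}|^2$; since $\Sigma_i$ lies at $M_i$-distance comparable to $1$ from $p$, the quadratic curvature decay \eqref{eq:QCD} gives $|\mathrm{Riem}_{M_i}|=r_i^2|\mathrm{Riem}_M|\lesssim 1$ there, while $|\sff_{\Sigma_i}|^2$ is bounded by \cref{prop:secondffstable}; hence $|\mathrm{Sect}_{g_i}|\le K_0$ for a uniform $K_0$. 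Second, $\mathrm{diam}(\Sigma_i,g_i)$ is bounded by the previous paragraph. Third, $\vol_i(\Sigma_i)$ is the perimeter of $E_i$ in $M_i$ and, by \cite[Theorem 4.23]{AntonelliPasqualettoPozzettaSemola1}, converges to $\haus^{n-1}(\partial B_1^{C(N)}(o))=\haus^{n-1}(N)>0$. Cheeger's lemma (see, e.g., \cite{Petersenbook}) then gives $\mathrm{inj}(\Sigma_i,g_i)\ge i_0>0$ for $i$ large, and rescaling back yields the injectivity radius bound in \eqref{eqn:DiamEInj}, completing the proof.

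The only genuinely delicate ingredient is the uniform local regularity underpinning the \textbf{Claim}, namely that the graph scale $\rho$ and the $C^{2,\alpha}$ bound $\overline C$ can be taken independent of $i$; this is precisely the effective Allard-type theory under two-sided curvature bounds developed in \cref{appA}. Once that is granted, the remaining steps — the finite covering with connected nerve, the connectedness of $\Sigma_i$, the chain-of-charts diameter bound, and the application of Cheeger's lemma — are routine, the only point requiring attention being that all constants stay independent of $i$, which is automatic here because we work along a fixed blow-down.
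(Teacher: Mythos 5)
Your proof is correct, but it reverses the logical order of the paper's argument and uses somewhat different forms of Cheeger-type estimates. Both proofs reduce to the same scale-invariant contradiction setup and rest on the same uniform $C^{2,\alpha}$ Allard-type graph estimates from \cref{appA} (via the Claim in the proof of \cref{prop:secondffstable}) and the resulting uniform two-sided sectional-curvature bounds for $(\Sigma_i, g_{\Sigma_i})$ coming from the Gauss equations. Where they diverge: the paper first establishes a \emph{local} volume lower bound on intrinsic balls $B_\rho^{\Sigma_i}(p)$ (combining the local equivalence of intrinsic and ambient distances, \eqref{eqn:LocalUniform}, with the density estimates from \cite[Corollary 4.15]{AntonelliPasqualettoPozzettaSemola1}), applies the local Cheeger--Gromov--Taylor injectivity radius estimate (\cite[Lemma 11.4.9]{Petersenbook}) to get $\mathrm{inj}$ first, and only then obtains the diameter bound from the total volume \emph{upper} bound $|\Sigma_i|\to\mathcal{H}^{n-1}(N)$ via a Bishop--Gromov covering argument. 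You instead prove the diameter bound first, by chaining a finite cover of the compact cross-section $N$ by uniformly-sized graph charts, and then deduce $\mathrm{inj}$ from the classical Cheeger lemma using the diameter bound, the total volume \emph{lower} bound $|\Sigma_i|\to\mathcal{H}^{n-1}(N)>0$, and the sectional-curvature bound. Both routes are sound; the paper's localized approach avoids the cover/nerve bookkeeping and sidesteps the connectedness of $\Sigma_i$ (which your diameter argument needs, though you do supply a proof of it), while yours avoids reusing the density estimates of \cite{AntonelliPasqualettoPozzettaSemola1}. One small point to keep in mind in your chart-covering step: the Claim in the proof of \cref{prop:secondffstable} yields the charts and the maps $F_i$ only along a subsequence for each fixed base point $p\in N$, so after choosing finitely many base points covering $N$ you should pass to a diagonal subsequence before the cover works uniformly in $i$ — harmless here since the whole argument is by contradiction, but worth stating.
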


\begin{proof}
    It is enough to prove the following \textbf{Claim}, and then argue by contradiction.
\smallskip

    \textbf{Claim}. Let $V_i\to\infty$, and let $\widetilde E_i\subset M$ be an isoperimetric set with volume $V_i$. Denote $r_i:=(\mathrm{AVR}(M)\omega_n)^{-\frac 1n}V_i^{\frac 1n}$. Let $E_i$ denote the set $\widetilde E_i$ considered in the rescaled manifold $M_{i}:=(M^n,\d_i:=r_i^{-1}\mathrm{d}_g,p)$. Set $\Sigma_i:=\partial E_i$, and denote by $g_i$, $\mathrm{d}_i$ the metric and distance on $M_i$, and by $g_{\Sigma_i}$ the induced metric on $\Sigma_i$. Then 
    \[
    \limsup_{i\to \infty} \mathrm{diam}(\Sigma_i,g_{\Sigma_i})<\infty\, , \qquad \liminf_{i\to\infty}\inj(\Sigma_i,g_{\Sigma_i}) >0\, .
    \]

\smallskip

    By \eqref{eqn:MetricC1alphaConvergence}, \eqref{eqn:AllardGraph}, and \eqref{eqn:Convergence} in the proof of \cref{prop:secondffstable}, the intrinsic and the restricted distances on $\Sigma_i$ are locally equivalent. I.e., there exist constants $\rho,C>0$ such that, for all $i$ large enough (up to subsequences) we have
    \begin{equation}\label{eqn:LocalUniform}
    \forall p\in\Sigma_i,\quad \forall q, r\in B_{\rho}^{M_i}(p)\cap\Sigma_i, \quad |\mathrm{d}_{\Sigma_i}(q,r)-\mathrm{d}_i(q,r)|\leq C\mathrm{d}_i(q,r)\, .
    \end{equation}
    Thus, joining \eqref{eqn:LocalUniform} with the density estimates in \cite[Corollary 4.15]{AntonelliPasqualettoPozzettaSemola1}, we get that, possibly taking a smaller $\rho$, there exists a constant $\xi>0$ such that, for all $i$ large enough,
    \begin{equation}\label{eqn:UniformLowerBoundVol}
    \forall p\in \Sigma_i,\, \vol_{\Sigma_i}(B_\rho^{\Sigma_i}(p))\geq \xi>0\, .
    \end{equation}
    Above $B_\rho^{\Sigma_i}$ denotes the ball in the metric $g_{\Sigma_i}$. By \eqref{eqn:SFFConvergenceFinally}, the quadratic Riemann curvature decay \eqref{eq:QCD}, and the Gauss equation, there exists a constant $K>0$ such that, for $i$ large enough, 
    \begin{equation}\label{eqn:UniformUpperBoundSec}
    |\mathrm{Sec}_{\Sigma_i}|\leq K\, .
    \end{equation}
    By \eqref{eqn:UniformLowerBoundVol}, \eqref{eqn:UniformUpperBoundSec}, and \cite[Lemma 11.4.9]{Petersenbook} we obtain that 
\begin{equation}\label{eqn:InfoLiminfinj}
\liminf_{i\to\infty}\mathrm{inj}(\Sigma_i,g_{\Sigma_i})>0\, .
    \end{equation}
    Finally, observe $|\Sigma_i|\to |\partial B_1^{C(N)}(o)|$ by \cite[Theorem 4.23]{AntonelliPasqualettoPozzettaSemola1}, where $B_1^{C(N)}(o)$ is the unit ball centered at the tip of an asymptotic cone $M_i$ is converging to, up to subsequences. Hence $|\Sigma_i|$ is uniformly bounded above. Joining the latter information with \eqref{eqn:InfoLiminfinj}, and \eqref{eqn:UniformUpperBoundSec}, a standard covering argument using Bishop--Gromov comparison theorem gives 
    \[
\limsup_{i\to\infty}\mathrm{diam}(\Sigma_i,g_{\Sigma_i})<\infty\, .
    \]
    Hence the \textbf{Claim} is proved, and the proof is concluded.
\end{proof}

As already mentioned, controlling the Ricci curvature term of the stability operator is the most delicate part of the argument for proving strict stability. One of the reasons is that, contrary to the other terms of the stability operator, the Ricci curvature term does not pass to the limit under blow-downs in general, and a careful volume selection argument is needed.

We note that a quadratic \emph{lower} curvature decay in \eqref{eq:QCD}, combined with \eqref{eq:EVG} is sufficient to guarantee that the Ricci curvature of $(M^n,g)$ weakly converges to $0$ in the almost radial directions when we consider any blow-down, see \cite[Theorem 1.1]{PetruninLebedeva}. However, even under the stronger two-sided quadratic curvature decay assumption, this weak convergence does not localize to (all) boundaries of large isoperimetric sets.
\smallskip

To illustrate the idea of the proof of \cref{lemma:Ricciiso0} it is helpful to introduce the notion of isoperimetric profile of $(M^n,g)$. We define the isoperimetric profile $I_M=I:[0,\infty)\to [0,\infty)$ by 
\begin{equation}
I(v):=\inf\{P(E)\, :\, E\subset M\, ,\,  \vol(E)=v\}\, .
\end{equation}
and let $J_M=J:=I^{\frac{n}{n-1}}$ be the normalized isoperimetric profile.
In \cite{AntonelliPasqualettoPozzettaSemola1} it was shown that if $(M^n,g)$ is complete and satisfies $\ric\ge 0$, then $J$ is a concave function, see also the earlier \cite{MondinoNardulli} where the same result was obtained under some additional assumptions. Moreover, if ${\rm AVR}(M)>0$, then 
\begin{equation}
J\sim J_{C(N)}
\end{equation}
for large volumes, where $J_{C(N)}$ denotes the normalized isoperimetric profile of one of the blow-downs of $(M^n,g)$. Note that $J_{C(N)}$ is an affine function whose slope depends only on ${\rm AVR}(M)$ (equivalently, on $\vol(N)$, which is independent of the blow-down), see \cite{MorganRitore02,AntonelliPasqualettoPozzettaSemola1}. 

For the proof of \cref{lemma:Ricciiso0} we will reverse the usual argument for proving the concavity of $J$ via the second variation formula, see \cite{BavardPansu, BrayPhD,Bayle04,AntonelliPasqualettoPozzettaSemola1}. The main idea is that $J$ is concave and asymptotically affine. Hence its second derivative converges to $0$ with a rate in an integral $L^1$-sense as $V\to\infty$. In particular, it converges to $0$ with the same rate on sets of volumes of scale invariantly larger and larger measure. For those volumes, we shall be able to control the Ricci curvature term of the stability operator. 

The proof of the forthcoming \cref{lemma:maxRicci} will require an effective version of the idea discussed above. We prefer to prove the weaker \cref{lemma:Ricciiso0} separately in order to introduced the first main idea of the proof in a simplified case.

\begin{proof}[Proof of \cref{lemma:Ricciiso0}]
By \cite[Theorem 1.1]{AntonelliPasqualettoPozzettaSemola1}, letting $J:(0,\infty)\to (0,\infty)$ be defined as $J:=I^{\frac{n}{n-1}}$ as above, $J$ is a concave function. By \cite[Corollary 3.6]{AntonelliBrueFogagnoloPozzetta} the right-derivative $J_+'$ (which is well-defined at every point by concavity) has a finite limit as $v\to \infty$. Denoting by $J''$ the second derivative, which is well-defined $\mathcal{L}^1$-a.e., we get from elementary convex analysis
that
\begin{equation}\label{eq:intJ''}
    \int_r^{2r}J''(v) \d v \to 0\, ,\quad \text{as $r\to \infty$}\, .
\end{equation}
For every $\delta>0$ we let $\mathcal{F}_{\delta}\subset (0,\infty)$ be the set of points $v\in (0,\infty)$ such that $J$ is twice differentiable at $v$ and $0\ge J''(v)v\ge -\delta$. Taking into account \eqref{eq:intJ''} and the nonpositivity of $J''$ it is elementary to infer that 
\begin{equation}
    \lim_{r\to \infty}\frac{\mathcal{L}^1(\mathcal{F}_{\delta}\cap (r,2r))}{r}=1\, ,
\end{equation}
for every $\delta>0$.
\medskip

Fix $\delta>0$ and let us consider any volume $V\in \mathcal{F}_{\delta}$. Let $E\subset M$ be any isoperimetric region in $M$ with $\vol(E)=V$, if there exist any.
Let $E_t$ be the $t$-enlargement of $E$ for all $t$ in a small neighborhood of $0$, i.e., $E_t:=\{x\in M\, :\, \d(x,E)<t\}$ if $t>0$ and $E_t:=\{x\in M\, : \, \d(x,M\setminus E)>-t\}$ for $t\le 0$. A standard argument in geometric measure theory based on the first variation formula for the perimeter shows that it is possible to find a continuous function $\tau$ defined in a neighborhood $(V-\mu,V+\mu)\ni V$ with $\tau(V)=0$ and such that $\vol(E_{\tau(v)})=v$ for each $v\in (V-\mu,V+\mu)$. We note that, by the very definition of the isoperimetric profile, it holds
\begin{equation}\label{eq:compar}
   \tilde{I}_V(v):= P(E_{\tau(v)})\ge I(v)\, ,
\end{equation}
for every $v\in (V-\mu,V+\mu)$. Moreover, setting $\tilde{J}_V:=\tilde{I}_V^{\frac{n}{n-1}}$, we have
\begin{align}
\label{eq:first lineIIv}    \frac{\d^2}{\d v^2}|_{v=V}\tilde{J}_V=&\frac{n}{n-1}P(E)^{\frac{2-n}{n-1}}\left[\frac{H^2}{n-1}-\fint_{\partial E}\left(|\sff_{\partial E}|^2+\ric(\nu_{\partial E},\nu_{\partial E})\right)\right]\\
\label{eq:secondline IIv}    \le &  -\frac{n}{n-1}P(E)^{\frac{2-n}{n-1}}\fint_{\partial E}\ric(\nu_{\partial E},\nu_{\partial E})\, .
\end{align}
Above, in order to pass from \eqref{eq:first lineIIv} to \eqref{eq:secondline IIv} we applied the Cauchy-Schwartz inequality to the $(n-1)$ eigenvalues of $\sff_{\partial E}$. By \eqref{eq:compar} it readily follows that $\tilde{J}_V\ge J$ on $(V-\mu,V+\mu)$ and $\tilde{J}_V(V)=J(V)$. Hence, since $V\in \mathcal{F}_{\delta}$, we get
\begin{equation}\label{eq:estimate''}
    \frac{n}{n-1}P(E)^{\frac{2-n}{n-1}}\fint_{\partial E}\ric(\nu_{\partial E},\nu_{\partial E})\le \frac{\delta}{V}\, .
\end{equation}
If $\partial E$ is smooth, the second derivative of $\tilde J_V$ above makes sense in the classical sense. Otherwise, the present argument can be adapted to infer \eqref{eq:estimate''} even when $E$ has singular boundary, by using the fact that the codimension of the singular part of $\partial E$ is at least 8, and standard arguments in geometric measure theory, compare with \cite{Bayle04}. We note that there exists a constant $C>1$ depending only on the asymptotic volume ratio of $M$ such that
\begin{equation}
  \frac{1}{C}V^{\frac{n-1}{n}} \le  P(E)\le C V^{\frac{n-1}{n}}\, ,
\end{equation}
for any isoperimetric set $E\subset M$ with $\vol(E)=V$.
Therefore, the statement follows from \eqref{eq:estimate''} up to choosing $\delta<\delta(\epsilon)$ sufficiently small and setting $\mathcal{G}_{\epsilon}:=\mathcal{F}_{\delta(\epsilon)}$\, .
\end{proof}

\section{Uniqueness for many volumes}

The goal of this section is to prove the uniqueness part of \cref{thm:mainthm}. We state below the precise result for the ease of readers.

\begin{thm}\label{thm:unique}
    Let $(M^n,g)$ be a smooth complete Riemannian manifold with $\mathrm{Ric}\ge 0$, Euclidean volume growth \eqref{eq:EVG}, and quadratic Riemann curvature decay \eqref{eq:QCD}. Assume that $(M^n,g)$ is not isometric to $\mathbb{R}^n$. Then there exists a measurable set $\mathcal{G}\subset (0,\infty)$ such that 
    \begin{equation}
        \lim_{r\to \infty}\frac{\mathcal{L}^1(\mathcal{G}\cap (r,2r))}{r}=1\, ,
    \end{equation}
    and for any $V\in \mathcal{G}$ the isoperimetric set $E_V\subset M$ of volume $V$ is unique. 
\end{thm}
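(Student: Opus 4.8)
The plan is to combine the three main ingredients from the previous sections---the strict volume-preserving stability from \cref{thm:strictstab}, the blow-down convergence from \cref{lemma:hausconv}, and a refined version of the Ricci control from \cref{lemma:Ricciiso0} (namely the forthcoming \cref{lemma:maxRicci} controlling a maximal function of the second derivative of a power of the isoperimetric profile)---according to the three-step scheme outlined in the introduction. First I would fix a large volume $V$ and suppose, toward a contradiction, that there are two distinct isoperimetric sets $E_V,E_V'$ with $\mathrm{vol}(E_V)=\mathrm{vol}(E_V')=V$. Since both are isoperimetric they have the same perimeter $P(E_V)=P(E_V')=I(V)$, and since they are smooth strictly stable CMC hypersurfaces (by \cref{thm:strictstab}, once $V$ is large and lies in the good set), they have the same mean curvature $H=I'(V)$ computed from the profile. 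By \cref{lemma:hausconv}, after rescaling by $r_V$ both $\partial E_V$ and $\partial E_V'$ Hausdorff-converge to $\partial B_1(o)$ in the blow-down, so in a scale-invariant sense $\partial E_V'$ is $C^0$-close to $\partial E_V$; the first step (\cref{prop:graphfunction}) upgrades this to writing $\partial E_V'$ as a normal graph $\mathrm{graph}(w)$ over $\partial E_V$ with $\|w\|_{C^1}$ scale-invariantly small and $\|w\|_{C^{2,\alpha}}$ scale-invariantly bounded, using effective geometric-measure-theoretic estimates (Allard-type regularity and the density/area bounds proved in the appendix).

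Next I would carry out the effective Taylor expansion of the perimeter. Set $u:=w-\fint_{\partial E_V}w\,\d P_{E_V}$, so $\int_{\partial E_V}u\,\d P_{E_V}=0$; the volume constraint $\mathrm{vol}(E_V')=\mathrm{vol}(E_V)$ forces the average of $w$ to be of size $O(\|w\|_{C^1}^2)$, so $u$ and $w$ agree up to quadratic error. Integrating the Laplacian of the signed distance function $\d^s_{E_V}$ over the region between $E_V$ and $E_V'$ (using that $\Delta \d^s_{E_V}$ restricted to $\partial E_V$ equals $H$, and its normal derivative encodes $\mathrm{Ric}(\nu,\nu)+|\sff|^2$), I expect to obtain an identity of the form
\begin{equation}
0 = P(E_V')-P(E_V) = \tfrac12 I_{\partial E_V}(u,u) + \mathcal{R}_V + o\!\left(\int_{\partial E_V}u^2\right),
\end{equation}
where $I_{\partial E_V}$ is the stability quadratic form \eqref{eq:stabform}, the error term $\mathcal{R}_V$ is controlled by $\big(\sup_{\partial E_V}\mathrm{Ric}(\nu,\nu)\big)\int_{\partial E_V}u^2$ up to scale-invariant constants, and the $o(\cdot)$ comes from the cubic and higher terms together with the quadratic correction in passing from $w$ to $u$, all of which are small by the effective $C^1$-bound on $w$. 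The point of doing this effectively is that every constant here is scale-invariant, so the estimate is not destroyed by the rescaling $r_V\to\infty$.

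Finally I would close the argument using the good set of volumes. By the refined Ricci control (\cref{lemma:maxRicci}), there is a set $\mathcal{G}\subset(0,\infty)$ of density $1$ at infinity such that for $V\in\mathcal{G}$ one has $V^{2/n}\sup_{\partial E_V}\mathrm{Ric}(\nu,\nu)$ as small as we like---in fact small enough that $|\mathcal{R}_V|\le \tfrac14 I_{\partial E_V}(u,u)$ after using the strict-stability lower bound $I_{\partial E_V}(u,u)\ge \lambda_V\int u^2$ with $\lambda_V\ge \eps_M V^{-2/n}$ from \cref{thm:strictstab}. Intersecting with the good sets from \cref{thm:strictstab} and choosing $V$ large, the displayed identity becomes $0\ge (\tfrac14 - o(1))\,I_{\partial E_V}(u,u)\ge c\,\eps_M V^{-2/n}\int_{\partial E_V}u^2$, forcing $u\equiv 0$; combined with the quadratic bound on the average of $w$ this gives $w\equiv 0$, i.e. $E_V'=E_V$, a contradiction. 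The main obstacle is the second step: making the Taylor expansion genuinely \emph{effective}---keeping all error terms scale-invariantly controlled so that strict stability (which degenerates at rate $V^{-2/n}$) still dominates---since the underlying convergence to the blow-down is only $C^{1,\alpha}$ and not $C^{2,\alpha}$, so one cannot simply invoke an implicit function theorem as in \cite{ChodoshEichmairVolkmann}; instead one must carefully track the graphing function and the distance-function integral, which is where the bulk of the work lies.
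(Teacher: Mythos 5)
Your overall scheme coincides with the paper's: assume two distinct isoperimetric sets $E_V,E_V'$ of the same volume $V$, use \cref{prop:graphfunction} to write $\partial E_V'$ as a normal graph $w$ over $\partial E_V$ with effective scale-invariant bounds, Taylor-expand $P(E_V')-P(E_V)$ by integrating $\Delta \d^s_{E_V}$ over the region between the two sets, identify the leading term with the stability form $I_{\partial E_V}(u,u)$ for $u=w-\fint w$, bound the Ricci error term, and conclude by the strict stability estimate $\lambda_V\ge \eps_M V^{-2/n}$ from \cref{thm:strictstab}. The role of the countable bad set of volumes (where $I$ fails to be differentiable, needed to get $H_{\partial E_V}=H_{\partial E_V'}$) is also identified correctly.

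There is, however, one genuine gap in the way you propose to control the Ricci error term. You assert that \cref{lemma:maxRicci} produces, for $V$ in the good set, smallness of $V^{2/n}\sup_{\partial E_V}\ric(\nu,\nu)$, and you then bound
\begin{equation}
\mathcal{R}_V \le \Bigl(\sup_{\partial E_V}\ric(\nu,\nu)\Bigr)\int_{\partial E_V}u^2\, .
\end{equation}
This is not what \cref{lemma:maxRicci} gives, and a pointwise sup bound is not available. The lemma controls only the scale-invariant \emph{averages}
\begin{equation}
V^{2/n}\fint_{B_r(\partial E_V)}\ric(\nabla \d_{\partial E_V},\nabla \d_{\partial E_V})\,\d\vol\, , \qquad 0<r<C_0V^{1/n}\, ,
\end{equation}
uniformly over tubular neighborhoods, because its proof rests on an $L^1$-type maximal function estimate for the second derivative of the (concave) normalized isoperimetric profile; there is no mechanism in that argument to upgrade an $L^1$ bound to an $L^\infty$ bound, even after restricting to a density-one set of volumes. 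Two structural points matter here. First, the Ricci term in the Taylor expansion is evaluated at points \emph{between} $\partial E_V$ and $\partial E_V'$ (on the parallel hypersurfaces), so even a sup over $\partial E_V$ would be the wrong quantity; one would need a sup over a tubular neighborhood of thickness $\|w\|_\infty$. Second, and more importantly, the correct bound keeps the Ricci \emph{inside} the integral: after bounding the triple integral
\begin{equation}
\int_{\partial E_V}\int_0^{w(x)}\int_0^z \ric_{(s,x)}(\partial_z,\partial_z)\,\d s\,\d z\,\d\vol
\end{equation}
by $\|w\|_\infty$ times an integral of $\ric(\nabla\d,\nabla\d)$ over a tubular neighborhood of thickness $\|w\|_\infty$, one applies the averaged bound of \cref{lemma:maxRicci} and then the $L^\infty$--$L^2$ estimate $\|w\|_\infty\le \mathcal{C}V^{-(n-1)/(2n)}\|w\|_{L^2}$ from \cref{prop:graphfunction} to absorb everything into $\eps\,\mathcal{C}\int_{\partial E_V}w^2$. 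Replacing this by a sup-of-Ricci bound short-circuits exactly the step that makes the whole argument compatible with the possible non-vanishing of $\ric(\nu,\nu)$ along subsequences of volumes, which is the phenomenon \cref{thm:counterexamples} exhibits.
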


The key idea will be to make effective the broad principle according to which strictly stable implies locally uniquely minimizing.
As mentioned in the introduction, the proof of \cref{thm:unique} is divided in a few steps. A first technical tool is the fact that the reach of isoperimetric boundaries of large volumes is uniformly bounded below in a scale-invariant form. We recall that for a bounded open subset $\Omega\subset M$ with smooth boundary in a Riemannian manifold $(M,g)$, the reach of $\partial\Omega$ is the supremum of all positive numbers $r$ such that for every point $x_0$ in $\{x \in M \st \dist(x,\partial \Omega)<r\}$ there exists a unique $y_0 \in \partial \Omega$ such that $\dist(x_0,y_0) = \dist(x_0,\partial \Omega)$. In particular the signed distance function from $\Omega$ is smooth in the open tubular neighborhood of $\partial \Omega$ having size equal to the reach of $\partial\Omega$.

\begin{lemma}\label{lem:PositiveReach}
     Let $(M^n,g)$ be a smooth complete Riemannian manifold with $\mathrm{Ric}\ge 0$, Euclidean volume growth \eqref{eq:EVG}, and quadratic Riemann curvature decay \eqref{eq:QCD}. Then there exists $\overline{V}, \overline{\rho}>0$ such that for every $V\ge \overline{V}$ the reach of the boundary of every isoperimetric set $E_V\subset M$ of volume $\vol(E_V)=V$ is bounded from below by $\overline{\rho}V^{\frac1n}$.
\end{lemma}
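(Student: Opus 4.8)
The plan is to establish the scale-invariant lower reach bound by contradiction, combining the blow-down convergence already in hand with the effective regularity theory of Appendix~A. Suppose there are isoperimetric sets $\widetilde E_i\subset M$ of volumes $V_i\to\infty$ whose boundaries have reach $\le \rho_i V_i^{1/n}$ with $\rho_i\to 0$. Rescale as usual, setting $r_i:=(\mathrm{AVR}(M)\omega_n)^{-1/n}V_i^{1/n}$ and $M_i:=(M^n,r_i^{-1}\d_g,p)$, and let $E_i$ be the rescaled sets; then the reach of $\partial E_i$ in $M_i$ tends to $0$. By \cref{lemma:hausconv} the boundaries $\partial E_i$ converge in the Hausdorff sense to $\partial B_1^{C(N)}(o)$ in a blow-down cone, and by (the proof of) \cref{prop:secondffstable} — in particular the graphical Allard-type representation \eqref{eqn:AllardGraph}--\eqref{eqn:AllardBound} and the $C^{2,\alpha'}$ convergence \eqref{eqn:Convergence} — the boundaries $\partial E_i$ are, on balls of a fixed scale $\rho>0$ around any point, graphs of functions $u_i$ with $\|u_i\|_{C^{2,\alpha}}\le\overline C$ in suitable harmonic-type charts, converging to the corresponding graph of the smooth hypersurface $\partial B_1^{C(N)}(o)$ away from the vertex. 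The ambient metrics converge in $C^{1,\alpha}$ on these balls as well, by \eqref{eqn:MetricC1alphaConvergence}.

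First I would record that a uniform $C^2$ bound on a hypersurface together with a uniform $C^{1,\alpha}$ bound on the ambient metric on a fixed-size ball yields a uniform positive lower bound on the reach on a slightly smaller ball: indeed, the normal exponential map is $C^{1}$ with controlled derivatives, its differential at the foot point is invertible with uniformly bounded inverse (the eigenvalues of the shape operator are bounded by the $C^2$ bound, so $\mathrm{id}-t\,\sff$ is invertible for $|t|$ below a uniform threshold), and a quantitative inverse function theorem then gives a tubular neighborhood of uniform size on which the nearest-point projection is well defined and single-valued. This is exactly the content that appears in the effective regularity discussion of \cref{appA}, and I would invoke \cref{lem:diaminj}-style reasoning (uniform two-sided sectional curvature bounds on $\partial E_i$ via the Gauss equation and \eqref{eq:QCD}) to ensure nothing degenerates near the diameter scale. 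The only place where the points of $\partial E_i$ could be "problematic" is near the vertex $o$ of the cone, but the Hausdorff convergence $\partial E_i\to\partial B_1^{C(N)}(o)$ keeps $\partial E_i$ uniformly away from $o$ (at scale-invariant distance comparable to $1$), so the $C^{1,\alpha}$ charts from \cref{prop:secondffstable} cover all of $\partial E_i$ uniformly.

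Putting these together: for all large $i$ the reach of $\partial E_i$ in $M_i$ is bounded below by a constant $\overline\rho>0$ depending only on $n$, $\mathrm{AVR}(M)$, the curvature constant $C$ in \eqref{eq:QCD}, and the uniform constants $\rho,\overline C$ from \cref{prop:secondffstable} — none of which depend on $i$. This contradicts $\mathrm{reach}(\partial E_i)\to 0$. Undoing the rescaling gives $\mathrm{reach}(\partial \widetilde E_i)\ge \overline\rho\, V_i^{1/n}$, which is the claim with $\overline V$ chosen as the threshold past which the above compactness applies.

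I expect the main obstacle to be purely bookkeeping: one must phrase the "uniform $C^2$ hypersurface plus $C^{1,\alpha}$ ambient metric implies uniform reach" implication as a genuinely quantitative, coordinate-free statement and check that the constants produced by the Allard-type theorem in \cref{appA} are uniform along the whole blow-down sequence and over all points of $\partial E_i$, including the (harmless) ones near the cone vertex. Once that quantitative tubular-neighborhood lemma is isolated — and it is essentially classical, relying only on the normal exponential map and a quantitative inverse function theorem — the contradiction argument is immediate. A secondary point to be careful about is that the reach is a property of $\partial E_i$ as a subset of $M_i$ with the \emph{ambient} distance, so one works with the restricted distance and the ambient normal exponential map throughout, rather than the intrinsic geometry of $\partial E_i$; the local equivalence of the two distances \eqref{eqn:LocalUniform} from the proof of \cref{lem:diaminj} is what makes this transition safe.
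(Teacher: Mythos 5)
Your argument is essentially the same as the paper's: rescale to unit-volume scale, invoke the Allard-type regularity from \cref{appA} to get uniform $C^{2,\alpha}$ graph representations (via \cref{prop:AdaptEpsRegularity}, \cref{thm:HarmonicRadius}, \cref{C1gamma}, as collected in the proof of \cref{prop:secondffstable} and \cref{rem:ApplicationToIsop}), combine with the second-fundamental-form bound from \cref{prop:secondffstable} and the ambient curvature bound from \eqref{eq:QCD}, and conclude by the reach bound in \cref{prop:reachbound}. The contradiction framing is cosmetic; the paper argues directly.

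One point to flag: your heuristic for the ``quantitative tubular neighborhood lemma'' — that invertibility of $\mathrm{id}-t\,\sff$ plus a quantitative inverse function theorem gives a single-valued nearest-point projection — only establishes that the normal exponential map $F(t,p)=\exp_p(t\nu_\Sigma(p))$ is a \emph{local} diffeomorphism. It does not by itself rule out that two normal geodesics emanating from distant points of $\partial E_i$ meet at a short distance, which is exactly the global self-intersection issue the reach must control. The paper's \cref{prop:reachbound} handles this via the two-sided hypothesis \eqref{eqn:2SidedWell} (normal geodesics stay on the correct side of $\partial E$ at a definite scale $\varepsilon_0$) and a first-touching-time argument showing that any violation would force $\gamma_p'(\ell)=-\gamma_q'(\ell)$ and hence $q=\exp_p(2\ell\nu_\Sigma(p))$, contradicting \eqref{eqn:2SidedWell}. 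In the application, \eqref{eqn:2SidedWell} with uniform $\varepsilon_0$ follows from the graph representation \eqref{eqn:GraphConditions}, a step your writeup leaves implicit. Since you are invoking the appendix lemma as a black box this is not a gap in the proof, but the description of how the lemma is proved undersells the genuine content, which is global and not just a local inverse function theorem.
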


\begin{proof}
After scaling, for a constant $\mathcal{C}$ depending on $M$, we can prove the claim for isoperimetric sets $E$ having volume $\mathcal{C}{\rm AVR}(M)\omega_n$ that are $(\Lambda,1)$-minimizers (\cref{def:LambdaMin}) for some fixed $\Lambda>0$ in manifolds with a uniform bound on $|{\rm Riem}|$, and such that \eqref{eqn:C1Control}, and \eqref{eqn:DensityControlPerimeter} hold for a suitable choice of local chart at boundary points of $E$ (see \cref{rem:ApplicationToIsop}, \cref{prop:AdaptEpsRegularity}, and \cref{thm:HarmonicRadius}). In particular \cref{C1gamma} can be applied at boundary points of $E$ and \eqref{eqn:GraphConditions} implies that \eqref{eqn:2SidedWell} is satisfied for some $\eps_0$ independent of the specific isoperimetric set. Taking into account also \cref{prop:secondffstable}, the conclusion follows by \cref{prop:reachbound}.
\end{proof}

The first main step for the proof of \cref{thm:unique} amounts to prove that for every sufficiently large volume away from a countable set, any two isoperimetric sets with that volume can be written one as a normal graph over the boundary of the other, with suitable scale-invariant bounds on the graphing function. The set of \textit{bad} volumes consists of the set of non-differentiability points of the isoperimetric profile $I$ of $(M,g)$. The precise statement is as follows. 

\begin{prop}\label{prop:graphfunction}
Let $(M^n,g)$ be a smooth complete Riemannian manifold with $\mathrm{Ric}\ge 0$, Euclidean volume growth \eqref{eq:EVG}, and quadratic Riemann curvature decay \eqref{eq:QCD}. Then there exists a constant $\mathcal{C}>0$ such that the following holds.

For any $\epsilon>0$ there exists $V_0=V_0(\epsilon)>0$ such that for every $V>V_0$ away from a countable set the following holds. If $E_V,E'_V\subset M$ are two distinct isoperimetric regions with $\vol(E_V)=\vol(E'_V)=V$, then $\partial E'_V$ is the normal graph over $\partial E_V$ of a function $w:\partial E_V\to \R$ satisfying:
\begin{equation}\label{eqn:FirstThing}
    \norm{w}_{\infty}\le \epsilon V^{\frac{1}{n}}\, ,\quad \norm{\nabla w}_{\infty}\le \varepsilon\, ,
\end{equation}
    \begin{equation}\label{eqn:SecondThing}
        \norm{w}_{\infty}\le \mathcal{C}V^{-\frac{n-1}{2n}}\norm{w}_{L^2(\partial E_V)}\, ,
    \end{equation}
    \begin{equation}\label{eqn:ThirdThing}
        \norm{\nabla w}_{L^2(\partial E_V)}\le \mathcal{C}V^{-\frac 1n}\norm{w}_{L^2(\partial E_V)}\, .
    \end{equation}
\end{prop}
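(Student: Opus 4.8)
The plan is to pass to the rescaled manifolds $M_V=(M^n,r_V^{-1}\d_g,p)$, cf.\ \eqref{eq:rV}, where both isoperimetric sets have the fixed volume $\mathrm{AVR}(M)\omega_n$ and their boundaries are $o(1)$-close to the unit sphere $\partial B_1(o)$ of a blow-down cone $C(N)$, and then to combine the effective regularity theory of \cref{appA} with elliptic estimates on $\partial E_V$ endowed with its (uniformly controlled) intrinsic geometry. To obtain the graph representation together with \eqref{eqn:FirstThing} I would argue by contradiction along a sequence $V_i\to\infty$, exactly as in the proofs of \cref{prop:secondffstable} and \cref{lem:diaminj}: up to a subsequence $M_{V_i}$ converges to some $C(N)$ in the pointed Gromov--Hausdorff sense and in $C^{1,\alpha}$ away from the vertex, and by \cref{lemma:hausconv} together with the Allard-type graph representation of \cref{appA} (the Claim in the proof of \cref{prop:secondffstable}), in harmonic-type charts at boundary points both $\partial E_{V_i}$ and $\partial E_{V_i}'$ are graphs of $C^{2,\alpha}$ functions over the same hyperplane directions, uniformly bounded in $C^{2,\alpha}$ and converging in $C^{2,\alpha'}$ to the \emph{same} limit graph, the one describing $\partial B_1(o)$. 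Hence the local normal-graph function of $\partial E_{V_i}'$ over $\partial E_{V_i}$ tends to $0$ in $C^1$; and since \cref{lem:PositiveReach} bounds the reach of $\partial E_{V_i}$ below by a uniform constant in the rescaled picture, the nearest-point projection $\pi\colon\partial E_{V_i}'\to\partial E_{V_i}$ is well defined, smooth, and a local diffeomorphism between closed hypersurfaces $C^{1,\alpha}$-close to (hence connected and diffeomorphic to) $\partial B_1(o)$, so it is a diffeomorphism by a standard degree argument. This patches the local graphs into a single global normal graph with graphing function $w$, and the Hausdorff bounds \eqref{eq:Hausconv} together with the $C^1$ convergence of the chart graphs give $\|w\|_\infty=o(V^{1/n})$ and $\|\nabla w\|_\infty=o(1)$, i.e.\ \eqref{eqn:FirstThing}.

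The next point is that, when $V$ avoids the countable set of non-differentiability points of the isoperimetric profile $I$ (equivalently, of the concave $J=I^{n/(n-1)}$), the boundaries $\partial E_V$ and $\partial E_V'$ are constant mean curvature hypersurfaces with the \emph{same} mean curvature $H=I'(V)$; this is the standard first-variation computation along the normal flow $E_t$, valid in the generalized sense since the singular set has codimension at least $8$. Writing $\partial E_V'$ as the normal graph of $w$ over $\partial E_V$ and expanding the mean curvature operator, $w$ then solves on $\partial E_V$ a quasilinear elliptic equation of the form
\[
\Delta_{\partial E_V}w=-\big(|\sff_{\partial E_V}|^2+\ric(\nu,\nu)\big)\,w-Q(x,w,\nabla w,\nabla^2 w),
\]
with quadratic remainder satisfying $|Q|\lesssim\|w\|_{C^1}\big(|\nabla^2 w|+|\nabla w|+|w|\big)$ as soon as $\|w\|_{C^1}$ is small.

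The estimates \eqref{eqn:SecondThing} and \eqref{eqn:ThirdThing} then follow from effective elliptic estimates whose constants depend only on the geometry of $\partial E_V$ in the rescaled picture. By \cref{lem:diaminj} and \cref{prop:secondffstable} the rescaled $\partial E_V$ has diameter, injectivity radius and sectional curvature uniformly controlled, and by \eqref{eq:QCD} and \cref{prop:secondffstable} the zeroth order coefficient $|\sff|^2+\ric(\nu,\nu)$ is uniformly bounded after rescaling; moreover, once $V_0$ is large, $\|w\|_{C^1}$ is small, so the coefficients coming from $Q$ are absorbed into the principal part. A De Giorgi--Nash--Moser iteration on a uniformly bounded number of charts of uniform size gives $\sup|w|\le C(\fint_{\partial E_V}|w|^2)^{1/2}$ in rescaled units, and since the rescaled perimeter converges to $\haus^{n-1}(N)$, rescaling back gives \eqref{eqn:SecondThing}. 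For \eqref{eqn:ThirdThing}, multiplying the equation by $w$, integrating by parts, bounding the zeroth order term by a constant times $\|w\|_{L^2}^2$, and absorbing $\int Q\,w$ into $\tfrac12\int|\nabla w|^2$ (after one further integration by parts to remove $\nabla^2 w$) when $\|w\|_{C^1}$ is small, gives $\int|\nabla w|^2\le\mathcal{C}^2\|w\|_{L^2}^2$ in rescaled units, which is \eqref{eqn:ThirdThing}. In the order of quantifiers, $\mathcal{C}$ is fixed first from the Moser and Caccioppoli constants; then $\epsilon$ is lowered below a threshold depending on $\mathcal{C}$ to legitimise the absorptions; and finally $V_0=V_0(\epsilon)$ is chosen.

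The hard part is the \emph{effectivity}: one must ensure that the Moser and Caccioppoli constants, as well as the charts used for the graph representation, are genuinely uniform over all large $V$ and over all pairs of isoperimetric sets of volume $V$. This rests both on the uniform geometric control of $\partial E_V$ coming from \cref{lem:diaminj}, \cref{prop:secondffstable} and \eqref{eq:QCD}, and on an accounting of the remainder $Q$ that keeps its entire contribution proportional to $\|w\|_{C^1}$, so that it becomes negligible by enlarging $V_0$ without ever degrading the constants. The upgrade from the local normal graphs to a single global graph over $\partial E_V$, uniformly in $V$, via the scale-invariant reach bound, is the other delicate point, although more routine.
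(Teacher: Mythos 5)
Your plan matches the paper's proof at essentially every level: rescale, use the Hausdorff convergence plus the Allard-type regularity of Appendix A to obtain the normal graph with small $C^1$ norm (yielding \eqref{eqn:FirstThing}), observe that off the countable set of non-differentiability points of $I$ the two boundaries share the constant mean curvature $H=I'(V)$, and then run a Moser iteration plus a Caccioppoli estimate on the intrinsic geometry of the (rescaled) $\partial E_V$, whose diameter, injectivity radius and sectional curvature are uniformly controlled by \cref{lem:diaminj} and \cref{prop:secondffstable}. The order of quantifiers (fix $\mathcal{C}$ from the Moser/Caccioppoli constants, then shrink $\epsilon$, then enlarge $V_0$) is also the same.

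There is one place where the paper is more careful than your outline and where your sketch, as written, would need more work. You linearize the CMC equation for the normal graph and write
\[
\Delta_{\partial E_V}w = -\big(|\sff_{\partial E_V}|^2+\ric(\nu,\nu)\big)w - Q(x,w,\nabla w,\nabla^2 w),
\]
with a remainder you claim satisfies $|Q|\lesssim\|w\|_{C^1}\big(|\nabla^2 w|+|\nabla w|+|w|\big)$. This is where the bookkeeping gets delicate: a remainder that genuinely contains $\nabla^2 w$ breaks the divergence structure that the Moser iteration relies on, and the "absorption after one further integration by parts" step is not automatic (it reintroduces the first-order coefficients that one was trying to avoid tracking). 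The paper sidesteps this entirely by invoking the exact Pacard--Sun expression for the mean curvature of a normal graph, which keeps the principal part in the form $\operatorname{div}_{g_w}\!\big(\nabla^w w/(1+|\nabla^w w|_w^2)^{1/2}\big)$. The other two Pacard--Sun terms are then bounded pointwise by $C_2(|w|+|\nabla^w w|_w^2)$ using only the Lipschitz dependence of the parallel second fundamental forms on the offset (coming from the Riccati equation and the scale-invariant curvature bound), not from a linearization that isolates the Jacobi operator. In particular the Ricci term never enters the PDE estimate; it is not needed there. With that divergence-form inequality, multiplying by $w_+^p$ and integrating by parts gives exactly the Moser chain without any second derivatives to absorb, and multiplying by $w$ gives the Caccioppoli inequality directly.

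So the approach is correct and essentially the one in the paper, but the one substantive imprecision is that to make your elliptic estimates go through cleanly you should work with the exact (divergence-form) mean curvature expression for a graph, as in \cref{lem:ToApplyMoser}, rather than with a linearization whose remainder carries $\nabla^2 w$. Once you do that, the Moser and Caccioppoli constants depend only on $n$, the second fundamental form and curvature bounds, and the upper bound on $H$ (not on the Ricci term), which is exactly what makes them uniform over all large volumes.
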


In the second step of the proof, we are going to effectively control the average of the Ricci curvature in \emph{normal} directions on tubular neighborhoods of any size for isoperimetric sets of most volumes. The precise statement follows:

\begin{prop}\label{lemma:maxRicci}
  Let $(M^n,g)$ be a smooth complete Riemannian manifold with $\mathrm{Ric}\ge 0$, Euclidean volume growth \eqref{eq:EVG}, and quadratic Riemann curvature decay \eqref{eq:QCD}. Then for every $\epsilon>0$ there exists a set $\mathcal{G}_{\epsilon}\subset (0,\infty)$ such that 
   \begin{equation}\label{eq:DensityOneVolumiRicciBasso}
        \lim_{r\to \infty}\frac{\mathcal{L}^1(\mathcal{G}_{\epsilon}\cap (r,2r))}{r}=1\, ,
    \end{equation}
 and for every volume $V\in \mathcal{G}_{\epsilon}$ and every isoperimetric set $E_V\subset M$ with volume $V$ it holds
    \begin{equation}\label{eq:stimamassimale}
   \sup_{0<r<C_0V^{\frac{1}{n}}} V^{\frac{2}{n}} \fint_{B_r(\partial E_{V})} \ric(\nabla \d_{\partial E_V},\nabla \d_{\partial E_V}) \d \vol\le \epsilon\,  .
    \end{equation}
\end{prop}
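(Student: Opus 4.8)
The plan is to quantify the reversed second–variation argument from the proof of \cref{lemma:Ricciiso0}, but now "at every scale", feeding it with a maximal–function estimate on the second derivative of $J:=I^{\frac{n}{n-1}}$. Throughout, fix an isoperimetric set $E_V$ of volume $V$, let $E_t=\{\d^s_{E_V}<t\}$ be its $t$–enlargement (smooth for $|t|<\overline{\rho}V^{1/n}$ by \cref{lem:PositiveReach}), and set $v(t)=\vol(E_t)$, $p(t)=P(E_t)$, so that $v'(t)=p(t)$. Define $\tilde I_V$ by $\tilde I_V(v(t))=p(t)$ and $\tilde J_V:=\tilde I_V^{\frac{n}{n-1}}$; since $E_V$ is isoperimetric, $\tilde J_V\ge J$ on the corresponding interval with equality at $V$. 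Combining the coarea formula with the evolution $\partial_t H_t=-|\sff_t|^2-\ric(\nu_t,\nu_t)$ of the mean curvature of the parallel hypersurfaces $\partial E_t$ (and $|\sff_t|^2=\tfrac{H_t^2}{n-1}+\tfrac{\Sigma_t}{n-1}$ with $\Sigma_t:=(n-1)|\sff_t|^2-H_t^2\ge0$), a computation that simply runs the one in \eqref{eq:first lineIIv}--\eqref{eq:estimate''} backwards and integrates in $t$ yields, writing $v_\pm=v(\pm r)$ and $\bar H_t=\fint_{\partial E_t}H_t$,
\[
\int_{B_r(\partial E_V)}\!\!\ric(\nabla\d_{\partial E_V},\nabla\d_{\partial E_V})\,\d\vol
= -\frac{n-1}{n}\int_{v_-}^{v_+}\!\tilde I_V(v)^{\frac{n-2}{n-1}}\tilde J_V''(v)\,\d v
\;+\;\mathcal E(r),
\]
where the error term is $\mathcal E(r)=\int_{-r}^{r}\!\int_{\partial E_t}\big(\tfrac{n-2}{n-1}(H_t-\bar H_t)^2-\tfrac1{n-1}\Sigma_t\big)\,\d\mathcal H^{n-1}\,\d t$, measuring exactly how far the $\partial E_t$ are from being totally umbilical with constant mean curvature.

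Next I would select the good volumes via a maximal function. By \cite[Theorem 1.1]{AntonelliPasqualettoPozzettaSemola1} $J$ is concave and, by \cite[Corollary 3.6]{AntonelliBrueFogagnoloPozzetta}, $J_+'$ has a finite limit $c_*$, so $\mu:=-J''\ge0$ is a nonnegative measure with $\mu((v,\infty))=J_+'(v)-c_*\to0$. Applying the Hardy--Littlewood weak–$(1,1)$ inequality on dyadic annuli together with $\mu((r/2,3r))\to0$, the set
$\mathcal G_\delta:=\{V: \mu((V-\varrho,V+\varrho))\le \delta\varrho/V \ \text{ for all }\ 0<\varrho\le V/4\}$
has density $1$ at infinity. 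For $V\in\mathcal G_\delta$ and $V$ large, $\tilde J_V$ lies above the concave function $J$ and touches it at $V$; letting $\ell$ be a line through $(V,J(V))$ that supports $J$ there and is tangent to $\tilde J_V$, the function $\ell-\tilde J_V$ is nonnegative, vanishes to first order at $V$, is convex up to the (small) defect coming from $\mathcal E$, and is bounded above by $\kappa(v-V)^2$ with $\kappa\lesssim\delta/V$ (using the $\mu$–bound). A one–variable convexity estimate then gives $\int_{v_-}^{v_+}(-\tilde J_V'')\,\d v\lesssim \delta(v_+-v_-)/V$, and since $\tilde I_V(v)\approx V^{\frac{n-1}{n}}$ uniformly on $[v_-,v_+]$ (by \cref{lemma:hausconv} and \cref{prop:secondffstable}) and $v_+-v_-=\vol(B_r(\partial E_V))$, the main term in the displayed identity is $\le C\delta\, V^{-2/n}\vol(B_r(\partial E_V))$, uniformly in $r<C_0 V^{1/n}$.

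The heart of the matter, and the step using the quadratic curvature decay, is controlling $\mathcal E(r)$. By \cref{prop:secondffstable} and its proof, after rescaling $\partial E_V$ converges in $C^{2,\alpha}$ to the totally umbilical sphere $\partial B_1(o)$ in a blow–down cone $C(N)$; the shape operators of the parallel hypersurfaces $\partial E_t$, $|t|<C_0 V^{1/n}$, evolve along the normal geodesics by the Riccati equation, whose only non–cone term is the ambient Ricci curvature in the normal direction. Because $\ric\ge0$ and the rescaled metrics converge to $g_{C(N)}$ in $C^{1,\alpha}$ with uniformly bounded curvature, $\widetilde\ric\rightharpoonup\ric_{C(N)}$ weakly in $L^p$ and $\ric_{C(N)}(\partial_r,\cdot)=0$, so the rescaling of $\int_{B_r(\partial E_V)}\ric(\nabla\d_{\partial E_V},\nabla\d_{\partial E_V})$ tends to $0$ uniformly in $r<C_0 V^{1/n}$; combining this with $\ric\ge0$ (via $x^2\le\|x\|_\infty x$ for $x\ge0$, applied to $t\mapsto\int_0^t\ric(\nu_s,\nu_s)\,\d s$) and a Grönwall argument for the oscillation of $H_t$, one shows that $\mathcal E(r)=o(V^{-2/n})\,\vol(B_r(\partial E_V))$ uniformly in $r<C_0 V^{1/n}$, provided $C_0=C_0(M)>0$ is chosen small enough (small relative to $\overline{\rho}$, to the Grönwall constant, and to $\omega_n{\rm AVR}(M)$). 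Putting the three steps together gives $V^{2/n}\fint_{B_r(\partial E_V)}\ric(\nabla\d_{\partial E_V},\nabla\d_{\partial E_V})\le C\delta+o(1)\le\epsilon$ for $\delta=\delta(\epsilon)$ small and $V$ large; one sets $\mathcal G_\epsilon:=\mathcal G_{\delta(\epsilon)}\cap(V_0(\epsilon),\infty)$, which has density $1$ at infinity.

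The main obstacle is precisely this last step: propagating the (asymptotic) umbilicity and constant mean curvature of $\partial E_V$ to the parallel hypersurfaces $\partial E_t$ despite the fact that the ambient Ricci curvature — the source term of the Riccati equation — does not vanish pointwise and converges only weakly to zero under blow–down. It is here that the nonnegativity of $\ric$ is used in an essential way (to upgrade weak smallness of tube integrals to $L^2$ smallness of the relevant antiderivatives), and it is here that making all estimates uniform in the tube radius $r$ — rather than just for $r$ comparable to $V^{1/n}$ — is delicate, forcing the choice of a small scale–invariant threshold $C_0 V^{1/n}$.
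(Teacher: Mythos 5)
Your overall strategy---reversing the second-variation argument and selecting good volumes by a maximal-function bound on $D^2J$---is exactly the spirit of the paper's proof, and your decomposition of the tube-integral of the Ricci into a term governed by $\tilde J_V''$ plus an error $\mathcal{E}(r)$ (measuring deviation of the parallel hypersurfaces from totally umbilic CMC spheres) is a correct exact identity. However, there are two genuine gaps.

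First, the claim that ``$\ell - \tilde J_V$ is nonnegative'' is false. Take $J$ affine near $V$ (for instance on a cone): then $\ell = J$ there, while $\tilde J_V \ge J = \ell$ by minimality, so $\ell - \tilde J_V \le 0$ with the \emph{opposite} sign. More generally, both $\ell$ and $\tilde J_V$ dominate $J$ and touch it at $V$, so there is no reason for one to dominate the other. Since $\tilde J_V$ is not concave (parallel surfaces are not isoperimetric), this ordering cannot be salvaged, and the one-variable convexity estimate you invoke to control $\int (-\tilde J_V'')$ does not follow. The paper avoids this by never differentiating $\tilde J_V$ twice away from $V$: it estimates the second difference quotient $J(|E_t|) + J(|E_{-t}|) - 2J(V)$ directly from $\tilde J_V \ge J$, $\tilde J_V(V) = J(V)$, and the maximal-function bound on $|D^2 J|$ in the associated interval (the paper's Lemma \ref{lemma:stimaJ}). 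You should work with $J$ and its second difference quotient throughout, feeding the maximal-function condition $|D^2J|((V-h,V+h))\lesssim \delta h / V$ directly into that quantity.

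Second, your control of $\mathcal{E}(r)$ is essentially circular: to bound the propagation of umbilicity and constant mean curvature along the Riccati flow you appeal to the smallness of the tube-integral of $\ric(\partial_r,\partial_r)$ after blow-down---but that is precisely the content of the proposition being proved. Weak $L^p$ convergence $\widetilde\ric \rightharpoonup \ric_{C(N)}$ is not enough to control $H_t$ or $\sff_t$ pointwise along geodesics, and the Gr\"onwall argument you sketch never closes. What is actually needed is the \emph{strong} $L^2$ convergence of the Laplacian of the signed distance function $\Delta f_i$ to its cone counterpart $\frac{n-1}{\dist^X_o}$. This is the content of the paper's Lemma \ref{lemma:convuniffinal}, and the key mechanism is the two-sided Laplacian comparison bound \eqref{eq:zzLaplacianBounds}: the integral of $|\frac{H_i}{1+\frac{H_i}{n-1}f_i} - \Delta f_i|$ can be written as a \emph{signed} integral using $\chi_i = \mathrm{sgn}(f_i)$, which passes to the limit via weak $L^2$ convergence and vanishes on the cone, upgrading the weak convergence to strong. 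Only then, after Fubini, can one deduce the almost-everywhere pointwise convergence of $\Delta f_i$ along normal geodesics and hence of $\int_{\partial E_r} H^2_{\partial E_r}$ uniformly in $r$. This step is the heart of the proof and is missing from your proposal.
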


In order to complete the proof of \cref{thm:unique} we are going to perform a Taylor expansion to compare the perimeter of two distinct isoperimetric sets $E_V$ and $E_V'$ with the same volume $V$. The identity $\mathrm{Per}(E_V)=\mathrm{Per}(E_V')$ will result in a contradiction provided that the volume $V$ is suitably chosen. In the next section we discuss the details by taking for granted \cref{prop:graphfunction} and \cref{lemma:maxRicci}, whose proofs are deferred to a subsequent section.

\subsection{Proof of \cref{thm:unique} given \cref{prop:graphfunction} and \cref{lemma:maxRicci}}

Let $\eps>0$ and let $\mathcal{G}_\eps$ be given by \cref{lemma:maxRicci}. Without loss of generality we can assume that the conclusion of \cref{thm:strictstab} holds for all volumes $V\in \mathcal{G}_{\epsilon}$. We claim that if $\eps$ is small enough, then the theorem follows taking $\mathcal{G}=\mathcal{G}_\eps \cap  (v_0,\infty)$ for some $v_0>0$.
\smallskip

We will denote by $\mathcal{C}>0$ any constant depending on $M$ only, independent of $\eps$, that may change from line to line.
Up to removing a countable (and hence negligible) set from $\mathcal{G}_\eps$, we can assume that the isoperimetric profile is differentiable at any volume in $\mathcal{G}_\eps$ and that \cref{prop:graphfunction} can be applied at any volume in $\mathcal{G}_\eps \cap (V_0(\eps), \infty)$.
The strategy will be to argue by contradiction. 
Assume that there exists a sequence $V_i \in \mathcal{G}_\eps$ such that $V_i\nearrow\infty$ and such that there exist two distinct isoperimetric sets $E_{V_i}, E_{V_i}'$ of volume $V_i$. There will be three main steps. In \textbf{Step 1} we are going to rely on \cref{prop:graphfunction} to infer that $\partial E_{V_i}'$ is a graph over $\partial E_V$, and to use the assumption that $\vol(E_{V_i})=\vol(E_{V_i}')=V_i$ to obtain an effective estimate on the average of the graphing function. In \textbf{Step 2} we are going to perform a Taylor expansion of the perimeter functional in a neighborhood of $\partial E_{V_i}$ in terms of the stability operator and an error term depending on the ambient Ricci curvature. In \textbf{Step 3} we will conclude with the help of \cref{lemma:maxRicci}, whose aim is exactly to control the potentially large Ricci curvature term.
\medskip

{\textbf{Step 1.}} Denote $r_i:=(\mathrm{AVR}(M)\omega_n)^{-\frac 1n}V_i^{\frac 1n}$, and $M_{V_i}:=(M^n,r_i^{-1}\mathrm{d}_g,p)$. Let $g_i$ be the rescaled metric on $M_{V_i}$. After rescaling and up to extracting a subsequence, we can assume that $M_{V_i}$ converges in Gromov--Hausdorff sense to an asymptotic cone $C(N)$ with $C^{1,\alpha}$-cross section $(N, g_N)$ and the sets $E_i, E_i'$ corresponding to $E_{V_i}, E_{V_i}'$, respectively, converge to the unit ball of $C(N)$ centered at the tip strongly in $L^1$. For large $i$, by \cref{prop:graphfunction} there exists a sequence of functions $w_i \in C^\infty(\partial E_i)$ that parametrize $\partial E_i'$ over $\partial E_i$ as a normal graph, i.e.,
\[
\partial E_i' = \left\{\exp_x(w_i(x) \nu_{E_i}(x)) \st x \in \partial E_i \right\}\, .
\]

We claim that for any $i\in \N$ sufficiently large it holds
\begin{equation}\label{eq:AlmostZeroMean}
    \left|\int_{\partial E_i} w_i - \frac12 H_{\partial E_i} \int_{\partial E_i} w_i^2 \right| \le \eps \,\mathcal{C} \int_{\partial E_i} w_i^2\, .
\end{equation}
The effect of \eqref{eq:AlmostZeroMean} is to tell that the graphing functions $w_i$ have almost zero average on $\partial E_i$, in a very quantitative way. This will be key later when it comes to reaching a contradiction to the strict volume preserving stability of $\partial E_i$.
\smallskip

From \cref{prop:reachbound}, we know that the reach of $\partial E_i$ is bounded below by a constant $\overline{\rho}>0$ independent of $i$. Hence, for any large $i$, we can fix an open neighborhood $U_i$ of $\partial E_i$ such that $\partial E_i' \subset \{ x\st \dist(x,\partial E_i) <\overline{\rho}/2 \} \Subset U_i$, and such that the metric $g_i$ on $U_i$ can be written in Fermi coordinates with respect to $\partial E_i$. Namely,
\begin{equation}
(U_i, g_i) \simeq ((-\overline{\rho}, \overline{\rho}) \times \partial E_i , \d z^2 + g_z)\, ,
\end{equation}
where $g_z$ is the pull-back on $\partial E_i$ via exponential map of the induced metric on the parallel hypersurface $\left\{\exp_x(z \nu_{E_i}(x)) \st x \in \partial E_i \right\}$, for $z \in (-\overline{\rho}, \overline{\rho})$. On the manifold $((-\overline{\rho}, \overline{\rho}) \times \partial E_i , \d z^2 + g_z)$ a normal graph over $\partial E_i$ defined by a function $u \in C^\infty(\partial E_i)$ with $|u|<\overline{\rho}$ is a set of the form $S=\{ ( u(x),x) \st x \in \partial E_i\}$, and the upward unit normal of $S$ can be written as the vector field
\begin{equation}\label{eq:NormaleFermiCoordinates}
    \frac{\partial_z - \nabla^u u}{\left( 1+ |\nabla^u u |^2_u \right)^{\frac12}}\, ,
\end{equation}
where $\nabla^u$ and $|\cdot|_u$ denote gradient and norm with respect to the metric $g_u$ on $\partial E_i$ given by $(g_u)_p := (g_{u(p)})_p$ at any $p \in \partial E_i$. Also, integration with respect to the volume form on the graph $S$ of $u$ can be written in terms of the volume form with respect to $g_u$ as
\begin{equation}\label{eq:FormaVolumeFermiCoordinates}
\int f(z,x) \de \vol_S = \int f(u(x), x) \left( 1+ |\nabla^u u |^2_u \right)^{\frac12} \de\vol_{g_u}\, ,
\end{equation}
for any continuous function $f$ on $S$.

Recall that $\partial_z \sqrt{\det g_z} = -\sqrt{\det g_z} \, \Delta \dist^s$ in any local chart on $\partial E_i$ and $H_{\partial E_i} (x)= \Delta \dist^s(0,x)$, where $\dist^s$ is the signed distance function from $E_i$ ($\dist^s$ is defined to be positive outside $E_i$, and negative inside $E_i$). Then for any $x \in \partial E_i$ working in a local chart on $\partial E_i$ such that $\sqrt{\det g_0}(x)=1$, we can expand
\begin{equation}
\begin{split}
    \sqrt{\det g_z} (z,x) &= 1 - \int_0^z \sqrt{\det g_z}(s,x) \, \Delta \dist^s(s,x) \de s  \\
    &= 1- H_{\partial E_i}z -\int_0^z \int_0^s  |\nabla^2 \dist^s|^2(t,x) + \ric_{(t,x)}(\partial_z, \partial_z)  \de t \de s +\\
    &\qquad - \int_0^z \int_0^s \left( \sqrt{\det g_z}(s,x) - 1\right) \, \Delta \dist^s(s,x) \de s\, .
\end{split}
\end{equation}
Note that $|\sff_{\partial E_i}| , H_{\partial E_i}$ are uniformly bounded with respect to $i$ by \cref{prop:secondffstable}. Exploiting the uniform scale invariant Riemann curvature bounds and the Riccati equation for the evolution of $\Delta \dist^s(s,x)$ and $\nabla^2 \dist^s(s,x)$ with respect to $s$ (see also the proof of \cref{lem:C1K} below), it follows that $|\nabla^2 \dist^s|^2(t,x) + \ric_{(z,x)}(\partial_z, \partial_z) + |\Delta \dist^s(t,x)|\le \mathcal{C}$ for any $x \in \partial E_i$ and any $|t|<\overline{\rho}$. Hence $|\sqrt{\det g_z}(z,x) - 1| \le \mathcal{C} |z|$ and
\begin{equation}
    \left| \sqrt{\det g_z} (z,x) - 1+ H_{\partial E_i}z  \right| \le \mathcal{C} z^2\, .
\end{equation}
Thus
\begin{equation}\label{eq:VolGzVolG0}
    \left|\frac{\de \vol_{g_z}}{\de \vol_{g_0}} - (1- H_{\partial E_i}z )  \right| \le \mathcal{C} z^2\, ,
\end{equation}
for any $z \in (-\overline{\rho}, \overline{\rho})$, where $\frac{\de \vol_{g_z}}{\de \vol_{g_0}}$ is the Radon--Nikod\'ym derivative of $\vol_{g_z}$ with respect to $\vol_{g_0}$.
\medskip

We proceed with the proof of the claimed \eqref{eq:AlmostZeroMean}.
Let $i$ be fixed and let $\{E_t\}_{t \in [0,1]}$ be the one-parameter family of sets having boundary
\begin{equation}
\left\{\exp_x(t w_i(x) \nu_{E_i}(x)) \st x \in \partial E_i \right\}\, ,
\end{equation}
such that $E_0=E_i$, $E_1=E_i'$. Denote $V(t):= \vol_i(E_t)$. Exploiting \eqref{eq:NormaleFermiCoordinates} and \eqref{eq:FormaVolumeFermiCoordinates} with $u=tw_i$ we can compute
\begin{equation}
\begin{split}
    V'(t) 
    &= \int_{\partial E_i} g_i\left( -\frac{\partial_z - t \nabla^{tw_i}w_i}{\left( 1+ t^2|\nabla^{tw_i}w_i|^2_{tw_i} \right)^{\frac12}} , w_i \partial_z\right) \left( 1+ t^2|\nabla^{tw_i}w_i|^2_{tw_i} \right)^{\frac12} \de \vol_{g_{tw_i}}\\
    &= -\int_{\partial E_i} w_i \de \vol_{g_{tw_i}}\, ,
\end{split}
\end{equation}
where $g_i$ here equals $\d z^2 + g_z$ and denotes the metric tensor on $M_{V_i}$ in the Fermi coordinates chart. Hence
\begin{equation}
\begin{split}
    0 &= |E_i|-|E_i'| = -\int_0^1 V'(t) \de t = \int_0^1 \int_{\partial E_i} w_i \de \vol_{g_{tw_i}} \de t  \\
    &= \int_0^1 \int_{\partial E_i} w_i \left(\frac{\de \vol_{g_{tw_i}}}{\de \vol_{g_0}} - (1 - H_{\partial E_i} t w_i) + (1 - H_{\partial E_i} t w_i) \right)\de \vol_{g_0} \de t 
    \\
    &= \int_{\partial E_i} w_i \de \vol_{g_0}- \frac12 H_{\partial E_i} \int_{\partial E_i} w_i^2\de \vol_{g_0} + \int_0^1 \int_{\partial E_i} w_i \left(\frac{\de \vol_{g_{tw_i}}}{\de \vol_{g_0}} - (1 - H_{\partial E_i} t w_i)\right)\de \vol_{g_0} \de t\, .
\end{split}
\end{equation}
Therefore
\begin{equation}
\begin{split}
    \left|\int_{\partial E_i} w_i - \frac12 H_{\partial E_i} \int_{\partial E_i} w_i^2 \right| &\le \left|
    \int_0^1 \int_{\partial E_i} w_i \left(\frac{\de \vol_{g_{tw_i}}}{\de \vol_{g_0}} - (1 - H_{\partial E_i} t w_i)\right)\de \vol_{g_0} \de t \right|
    \\
    &\overset{\eqref{eqn:FirstThing}}{\le} 
    \eps
    \int_0^1 \int_{\partial E_i}  \left|\frac{\de \vol_{g_{tw_i}}}{\de \vol_{g_0}} - (1 - H_{\partial E_i} t w_i)\right|\de \vol_{g_0} \de t
    \\
    &\overset{\eqref{eq:VolGzVolG0}}{\le}
    \eps\, \mathcal{C} \int_0^1 \int_{\partial E_i} t^2 w_i^2 \de \vol_{g_0} \de t\, ,
\end{split}
\end{equation}
and \eqref{eq:AlmostZeroMean} follows.
\medskip

{\textbf{Step 2.}} The goal of this second step is to prove that for every sufficiently large $i\in\N$ it holds
\begin{align}
    \label{eq:zzIntegraleLaplacianoDiffSimmetrica3}
    \frac12 \int_{\partial E_i}   |\nabla w_i|^2 - |\sff_{\partial E_i}|^2 w_i^2\de P_{\partial E_i} \le\,  & \eps \, \mathcal{C} \int_{\partial E_i} w_i^2 \de  P_{\partial E_i} \\
    &+ \int_{\partial E_i}\int_0^{w_i(x)}\int_0^z  \ric_{(s,x)}(\partial_z, \partial_z) \de s \de z \de \vol_{g_0} \, .\label{eq:badRicci}
\end{align}
The estimate will be obtained by applying the Gauss--Green theorem to the gradient of the signed distance function from $\partial E_i$ in the region bounded between $\partial E_i$ and $\partial E_i'$, carefully expanding all the terms and exploiting the assumption that $P(E_i)=P(E_i')$.

We start from the Gauss-Green identity
\begin{equation}\label{eq:zzIntegraleLaplacianoDiffSimmetrica}
    \int_{E_i'\Delta E_i} \Delta \dist_{\partial E_i} = \int_{\partial E_i'} g_i(\nabla \dist^s, - \nu_{E_i'} ) \de P_{\partial E_i'} - P(E_i)\, ,
\end{equation}
where $\nu_{E_i'}$ denotes the inner normal of $E_i'$ and $\dist^s$ is the signed distance from $E_i$.

Exploiting Fermi coordinates around $\partial E_i$ again, and recalling \eqref{eq:NormaleFermiCoordinates} and \eqref{eqn:FirstThing}, for $\eps$ small enough we can compute
\begin{equation*}
    \begin{split}
        \int_{\partial E_i'} &g_i(\nabla \dist^s, - \nu_{E_i'} ) \de P_{\partial E_i'}
        = \int_{\partial E_i'} \frac{1}{(1+ |\nabla^{w_i} w_i|_{w_i}^2(x))^{1/2}} \de P_{\partial E_i'}(w_i(x),x) \\
        \le &\int_{\partial E_i'} 1 - \frac12  |\nabla^{w_i} w_i|_{w_i}^2(x) + 2 |\nabla^{w_i} w_i|_{w_i}^4(x)  \de P_{\partial E_i'}(w_i(x),x) \\
        \overset{\eqref{eq:FormaVolumeFermiCoordinates}}{=}& P(E_i') \\
        &+ \int_{\partial E_i} \left(- \frac12  |\nabla^{w_i} w_i|_{w_i}^2(x) + 2 |\nabla^{w_i} w_i|_{w_i}^4(x)  \right) \left( 1+ |\nabla^{w_i} w_i|_{w_i}^2(x) \right)^{\frac12} \frac{\de \vol_{g_{w_i}}}{\de \vol g_0} \de \vol_{g_0}(x)\, .
    \end{split}
\end{equation*}
Expanding $|\cdot|_z$ with respect to $z$, it can be checked that $(1- \mathcal{C}|w_i|)  |\nabla w_i|^2 \le |\nabla^{w_i} w_i|_{w_i}^2 \le (1+ \mathcal{C}|w_i|)  |\nabla w_i|^2$, where $ |\nabla w_i|^2$ is understood to be computed with respect to $g_0$ (see the proof of \cref{lem:C1K}, and in particular the second of \eqref{eqn:VolumeAndGradientControl}). Hence, \eqref{eqn:FirstThing}, \eqref{eqn:ThirdThing}, and \eqref{eq:VolGzVolG0} imply
\begin{equation}
    \begin{split}
        \int_{\partial E_i'} g_i(\nabla \dist^s, - \nu_{E_i'} ) \de P_{\partial E_i'}
        &\le 
         P(E_i') - \frac12 \int_{\partial E_i}   |\nabla w_i|^2 \de P_{\partial E_i} + \eps \, \mathcal{C} \int_{\partial E_i} |\nabla w_i|^2 \de P_{\partial E_i} +\\
         &\qquad+
         \eps\,\mathcal{C} \int_{\partial E_i} |\nabla w_i|^4 \de P_{\partial E_i} \\
         &\overset{\eqref{eqn:ThirdThing}}{\le} 
         P(E_i') - \frac12 \int_{\partial E_i}   |\nabla w_i|^2 \de P_{\partial E_i} + \eps \, \mathcal{C} \int_{\partial E_i} w_i^2 \de  P_{\partial E_i}\, .
    \end{split}
\end{equation}
Since $P(E_i)=P(E_i')$, \eqref{eq:zzIntegraleLaplacianoDiffSimmetrica} is equivalent to
\begin{equation}\label{eq:zzIntegraleLaplacianoDiffSimmetrica2}
    \int_{E_i'\Delta E_i} \Delta \dist_{\partial E_i} \le - \frac12 \int_{\partial E_i}   |\nabla w_i|^2 \de P_{\partial E_i} + \eps \, \mathcal{C} \int_{\partial E_i} w_i^2 \de  P_{\partial E_i}\, .
\end{equation}

By the coarea formula we can also compute
\begin{equation}
    \begin{split}
        \int_{E_i'\Delta E_i} \Delta \dist_{\partial E_i}
        &= \int_{-\overline{\rho}}^{\overline{\rho}} \int_{\{ x \in \partial E_i \,:\, (z,x) \in E_i'\Delta E_i \}}\Delta \dist_{\partial E_i} (z,x) \de \vol_{g_z} \de z \\
        &= \int_{-\overline{\rho}}^{\overline{\rho}} \int_{\{w_i>0\}} \chi_{\{w_i>z\}}(x) \Delta \dist_{\partial E_i} (z,x) \frac{\de \vol_{g_z}}{\de \vol_{g_0}} \de \vol_{g_0}
         \de z
        + \\
        &\quad+
        \int_{-\overline{\rho}}^{\overline{\rho}} \int_{\{w_i<0\}} \chi_{\{w_i>z\}}(x) \Delta \dist_{\partial E_i} (z,x) \frac{\de \vol_{g_z}}{\de \vol_{g_0}} \de \vol_{g_0}
        \de z
        \\
        &=
        \int_{\{w_i>0\}}
        \int_{-\overline{\rho}}^{\overline{\rho}}
         \chi_{(0,w_i(x))}(z) \Delta \dist_{\partial E_i} (z,x) \frac{\de \vol_{g_z}}{\de \vol_{g_0}} \de z \de \vol_{g_0} +\\
         &\quad +
         \int_{\{w_i<0\}}
        \int_{-\overline{\rho}}^{\overline{\rho}}
         \chi_{(w_i(x),0)}(z) \Delta \dist_{\partial E_i} (z,x) \frac{\de \vol_{g_z}}{\de \vol_{g_0}} \de z \de \vol_{g_0} \\
         &= - \int_{\partial E_i} \int_0^{w_i(x)} 
         \Delta \dist^s (z,x) \frac{\de \vol_{g_z}}{\de \vol_{g_0}} \de z \de \vol_{g_0}\, .
    \end{split}
\end{equation}
Recall the evolution equation
\begin{equation}
\Delta \dist^s (z,x) = H_{\partial E_i}(x) + \int_0^z |\nabla^2 \dist^s|^2(s,x) + \ric_{(s,x)}(\partial_z, \partial_z) \de s\, .
\end{equation}
Since 
\begin{equation}
\left|  |\nabla^2 \dist^s|^2(s,x)- |\nabla^2 \dist^s|^2(0,x)\right| \le \mathcal{C}|s|\, 
\end{equation}
thanks to the scale invariant Riemann curvature bounds,
we thus find
\begin{equation*}
\begin{split}
    &\int_{E_i'\Delta E_i} \Delta \dist_{\partial E_i}
        =\\
        &=
        - \int_{\partial E_i} \int_0^{w_i(x)} 
         \left( H_{\partial E_i} + \int_0^z |\nabla^2 \dist^s|^2(s,x) + \ric_{(s,x)}(\partial_z, \partial_z) \de s\right) (1- H_{\partial E_i} z) \de z \de \vol_{g_0} +\\
         &\qquad
         - \int_{\partial E_i} \int_0^{w_i(x)} 
         \Delta \dist^s (z,x) \left(  \frac{\de \vol_{g_z}}{\de \vol_{g_0}} -(1- H_{\partial E_i}(x) z) \right)\de z \de \vol_{g_0}\\
         &\overset{\eqref{eq:VolGzVolG0}}{\ge}
         - \int_{\partial E_i} 
          H_{\partial E_i}w_i - H_{\partial E_i}^2 \frac{w_i^2}{2} +  |\sff_{\partial E_i}|^2 \frac{w_i^2}{2} \de \vol_{g_0} + \\
          &\qquad
          -\int_{\partial E_i}\int_0^{w_i(x)}(1- H_{\partial E_i} z) \int_0^z  \ric_{(s,x)}(\partial_z, \partial_z) \de s \de z \de \vol_{g_0} +\\
          &\qquad - \eps \, \mathcal{C} \int_{\partial E_i} w_i^2 \de \vol_{g_0}  \\
          &\overset{\eqref{eq:AlmostZeroMean}}{\ge}
          -\int_{\partial E_i}  \frac12 |\sff_{\partial E_i}|^2 w_i^2 \de \vol_{g_0}  -\int_{\partial E_i}\int_0^{w_i(x)}\int_0^z  \ric_{(s,x)}(\partial_z, \partial_z) \de s \de z \de \vol_{g_0} + \\
          &\qquad - \eps \, \mathcal{C} \int_{\partial E_i} w_i^2 \de \vol_{g_0} \, .
\end{split}
\end{equation*}
Recalling \eqref{eq:zzIntegraleLaplacianoDiffSimmetrica2} we get the claimed \eqref{eq:zzIntegraleLaplacianoDiffSimmetrica3}.
\medskip

\textbf{Step 3.} In this last step we are going to reach a contradiction to the strict volume preserving stability of $\partial E_i$ by estimating the Ricci curvature term in \eqref{eq:badRicci} with the help of \cref{lemma:maxRicci}.

We start by noting that
\begin{equation*}
    \begin{split}
    \int_{\partial E_i}&\int_0^{w_i(x)}\int_0^z  \ric_{(s,x)}(\partial_z, \partial_z) \de s \de z \de \vol_{g_0}
    \le \|w_i\|_\infty \int_{-\|w_i\|_\infty}^{\|w_i\|_\infty}  \int_{\partial E_i}\ric_{(s,x)}(\partial_z, \partial_z) \de s \de \vol_{g_0} \, .
    \end{split}
\end{equation*}
Rewriting the integral in \eqref{eq:stimamassimale} exploiting Fermi coordinates around $\partial E_i$ and using \eqref{eq:VolGzVolG0} as we did above, it is readily checked that \cref{lemma:maxRicci} implies
\begin{equation}
\int_{-\|w_i\|_\infty}^{\|w_i\|_\infty}  \int_{\partial E_i}\ric_{(s,x)}(\partial_z, \partial_z) \de s \de \vol_{g_0} \le \eps\, \mathcal{C} \|w_i\|_\infty + \mathcal{C}\|w_i\|_\infty^2\, .
\end{equation}
Hence
\begin{equation}\label{eq:zzIntegraleLaplacianoDiffSimmetrica4}
     \int_{\partial E_i}\int_0^{w_i(x)}\int_0^z  \ric_{(s,x)}(\partial_z, \partial_z) \de s \de z \de \vol_{g_0}
    \le \eps\, \mathcal{C} \|w_i\|_\infty^2 \overset{\eqref{eqn:SecondThing}}{\le} \eps\, \mathcal{C} \int_{\partial E_i} w_i^2 \de P_{\partial E_i}\, .
\end{equation}
Combining \eqref{eq:zzIntegraleLaplacianoDiffSimmetrica3} and \eqref{eq:zzIntegraleLaplacianoDiffSimmetrica4} we end up with
\begin{equation}\label{eq:zzIntegraleLaplacianoDiffSimmetrica5}
     \int_{\partial E_i}   |\nabla w_i|^2 - |\sff_{\partial E_i}|^2 w_i^2\de P_{\partial E_i} \le  \eps \, \mathcal{C} \int_{\partial E_i} w_i^2 \de  P_{\partial E_i}\, .
\end{equation}
Let $u_i:= w_i - \fint_{\partial E_i} w_i \de P_{\partial E_i}$. Notice that
\begin{equation*}
    \begin{split}
        \int_{\partial E_i} w_i^2 
        &= \int_{\partial E_i} \left( u_i^2 + \left(  \fint_{\partial E_i} w_i \right)^2 +2 u_i \fint_{\partial E_i} w_i \right)
        \le 
        2\int_{\partial E_i}  u_i^2  \de P_{\partial E_i}+ \frac32 \frac{1}{P(E_i)}\left(  \int_{\partial E_i} w_i \right)^2  \\
        & 
        \overset{\eqref{eq:AlmostZeroMean}}{\le} 2\int_{\partial E_i}  u_i^2  \de P_{\partial E_i}+ \eps \, \mathcal{C} \int_{\partial E_i} w_i^2  \de P_{\partial E_i}\, .
    \end{split}
\end{equation*}
Analogously,
\begin{equation*}
    \begin{split}
        \int_{\partial E_i}   |\sff_{\partial E_i}|^2 w_i^2
        &= \int_{\partial E_i}   |\sff_{\partial E_i}|^2 u_i^2 + 2 \fint_{\partial E_i} w_i \int_{\partial E_i} |\sff_{\partial E_i}|^2 u_i
        +  \left( \fint_{\partial E_i} w_i\right)^2 \int_{\partial E_i} |\sff_{\partial E_i}|^2 \\
        & 
        \overset{\eqref{eq:AlmostZeroMean}}{\le} \int_{\partial E_i}  |\sff_{\partial E_i}|^2 u_i^2  \de P_{\partial E_i}+ \eps \, \mathcal{C} \int_{\partial E_i} u_i^2  \de P_{\partial E_i}\, .
    \end{split}
\end{equation*}
Therefore \eqref{eq:zzIntegraleLaplacianoDiffSimmetrica5} can be rewritten as
\begin{equation}
    \int_{\partial E_i}   |\nabla u_i|^2 - |\sff_{\partial E_i}|^2 u_i^2\de P_{\partial E_i} \le  \eps \, \mathcal{C} \int_{\partial E_i} u_i^2 \de  P_{\partial E_i}\, .
\end{equation}
Observing that $\fint_{\partial E_i} u_i=0$, the strict stability stability estimate for the Jacobi operator from \cref{thm:strictstab} implies
\begin{equation}
\eps\, \mathcal{C} \int_{\partial E_i} u_i^2 \de P_{\partial E_i}
\ge  \eps_M\int_{\partial E_i} u_i^2  \de P_{\partial E_i}\, ,
\end{equation}
for some $\eps_M>0$ depending on $M$. Hence, if $\eps$ was taken sufficiently small, this implies that $w_i$ is constant. Since $w_i\not\equiv0$ as $E_i$ and $E_i'$ are distinct, this implies that $\vol_i(E_i)\neq \vol_i(E_i')$, where the volume is computed in the metric of $M_{V_i}$. This results into a contradiction, thus completing the proof of \cref{thm:unique}.

\subsection{Estimates for isoperimetric graphs}\label{sec:estimategraphs}

The goal of this section is to establish several technical estimates that will be useful later for the proof of \cref{prop:graphfunction}.
\medskip

We shall consider a smooth $n$-dimensional Riemannian manifold $(M,g)$ and we will assume that:
\begin{enumerate}
    \item\label{Condition1} There exist two isoperimetric sets $E_V, E'_V$ having equal volume $V>0$, having smooth boundary, and such that $H_{\partial E_V} = H_{\partial E_V'}$. Moreover we assume $\vartheta \leq H_{\partial E_V}\leq \eta$, for some $\vartheta,\eta>0$.

    \item\label{Condition2} $\Sigma':=\partial E_V'$ can be written as the normal graph over $\Sigma:=\partial E_V$ of a function $w:\Sigma\to\mathbb R$. Namely, 
    \begin{equation}
    \Sigma'=\{\exp_p(w(p)\nu_{\Sigma}(p)):p\in \Sigma\}\, ,
    \end{equation}
    where $\nu_\Sigma$ is the unit inner normal of $E_V$.

    \item\label{Condition3}  $\Sigma'\subset U$ where $U$ is an open neighborhood of $\Sigma$ where the metric $g$ can be written in Fermi coordinates with respect to $\Sigma$. Namely, there is $a>0$ such that
    \begin{equation}
    (U,g) \simeq ( (-a,a) \times \Sigma , \d z^2 + g_z ),
    \end{equation}
    where $g_z$ is the pull-back on $\Sigma$ via exponential map  of the induced metric on the parallel hypersurface
    \begin{equation}
    \Sigma_z:=\{\exp_p(z\nu_\Sigma(p)):p\in\Sigma\}\, ,
    \end{equation}
    for $z \in (-a,a)$. Let $\Psi_z:\Sigma\to \Sigma_z$ be the map $\Psi_z(p):=\exp_p(z\nu_\Sigma(p))$.

    \item\label{Condition4} There exists $K>1$ such that $ \|\sff_{\Sigma} \|_\infty+\sup_{p\in U}|\mathrm{Riem}(p)|\le K$ for every $p \in U$.

    \item\label{Condition5} There exists $\eps_0:=\eps_0(K,n,\vartheta,\eta,a)$ such that, if $C_1(K,a), C_2(n,K,\eta,a), C_3(n,K,a)$ denote the constants appearing in \cref{lem:C1K}, \eqref{eqnToApplyMoser}, and \eqref{eqn:Control3} below, respectively, then
    \begin{equation}\label{eqn:Control1}
    \frac 12 < (1-C_1\varepsilon_0)^{n/2} \leq (1+C_1\varepsilon_0)^{n/2} < 2\, ,
    \end{equation}
    \begin{equation}\label{eqn:Control2}
    \max\left\{\varepsilon_0,\varepsilon_0^2(1+C_1\varepsilon_0),C_1\varepsilon_0,C_2\varepsilon_0\right\}< \min\left\{\frac{1}{10},\frac{\vartheta}{C_3}\right\}\, ,
    \end{equation}
    and $\|w\|_\infty + \|\nabla w\|_\infty < \eps_0$. 
\end{enumerate}

We will denote by $g_w$ the metric on $\Sigma$ such that $(g_w)_p(v,v'):=g_{w(p)}(v,v')$, for every $v,v'\in T_p\Sigma$. We denote with $\nabla^w:=\nabla^{g_w}$ the gradient with respect to the metric $g_w$ on $\Sigma$, and with $|\cdot|_w$ the norm in the metric $g_w$. In this notation $g_0$ is the metric induced by $g$ on $\Sigma$. We stress that $\nabla $ denotes the gradient with respect to the metric $g_0$, and $|\cdot|$ is the norm with respect to the metric $g_0$. 

We will denote $\dot g_z:=\partial_zg_z$. Notice that $\dot g_z$ is a $(0,2)$-tensor on $\Sigma_z$, and $\dot g_z=2\sff_{\Sigma_z}$. We can identify $\dot g_z$ with a $(0,2)$-tensor on $\Sigma$ via the pull-back with the map $\Psi_z$ defined in \cref{Condition3}, namely, for every $p\in \Sigma$, and every $v,v'\in T_p\Sigma$, $(\dot g_z)_p(v,v')=2\sff_{\Sigma_z}(\mathrm{d}(\Psi_z)(v),\mathrm{d}(\Psi_z)(v'))$. We also denote $(\dot g_w)_p:=\dot g_{w(p)}$.

A standard computation exploiting Jacobi fields, \cref{Condition3} and \cref{Condition4} implies that there exists a constant $\mathcal{C}:=\mathcal{C}(K,a)$ such that for every $p\in \Sigma$, every $v\in T_p\Sigma$ with $(g_0)_p(v,v)=1$, and every $z\in (-a,a)$, it holds
\begin{equation}\label{eqn:ControlExponential}
g_z(v,v)\leq \mathcal{C}\, .
\end{equation}

We record the expression for the mean curvature of $\Sigma'$ in terms of $w$, borrowed from \cite{PacardSun}. It will be key for the subsequent estimates.

\begin{lemma}[{\cite[Proposition 4.1]{PacardSun}}]
Assume \cref{Condition2}, and \cref{Condition3} above hold. The mean curvature of $\Sigma'$ is 
\begin{equation}\label{eqn:MeanCurvatureGraph}
H_{\Sigma'}=-\mathrm{div}_{g_w}\left(\frac{\nabla^w w }{\left(1+|\nabla^w w|_w^2\right)^{\frac 12}}\right)+\frac 12 \left(1+|\nabla^w w|_w^2\right)^{\frac 12} \mathrm{tr}_{g_w}\dot g_w-\frac 12 \frac{\dot g_w(\nabla^w w,\nabla^w w)}{\left(1+|\nabla^w w|_w^2\right)^{\frac 12}}\, .
\end{equation}
\end{lemma}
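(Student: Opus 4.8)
This is \cite[Proposition~4.1]{PacardSun}, so one may simply invoke it; to be self-contained the plan is to carry out the classical local computation of the mean curvature of a normal graph, adapted to the Fermi chart of \cref{Condition3}. First I would fix local coordinates $x^1,\dots,x^{n-1}$ on $\Sigma$ and use $(z,x^1,\dots,x^{n-1})$ as coordinates on $U$, so that $g=\de z^2+g_z$. Then $\Sigma'$ is the image of $x\mapsto(w(x),x)$, with coordinate tangent vectors $X_i=\partial_i+(\partial_i w)\partial_z$; hence the induced metric is $\widetilde g_{ij}=(g_w)_{ij}+\partial_i w\,\partial_j w$ and, by the Sherman--Morrison formula, $\widetilde g^{ij}=(g_w)^{ij}-W^{-2}(\nabla^w w)^i(\nabla^w w)^j$ with $W:=(1+|\nabla^w w|_w^2)^{1/2}$. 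The unit normal of $\Sigma'$ is the field in \eqref{eq:NormaleFermiCoordinates}, namely $\nu_{\Sigma'}=W^{-1}(\partial_z-\nabla^w w)$, which one checks directly to be orthogonal to all the $X_i$ and of unit length.

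Next I would compute the second fundamental form $\sff_{\Sigma'}(X_i,X_j)=g(\overline{\nabla}_{X_i}X_j,\nu_{\Sigma'})$, using that in a Fermi chart the $z$-lines are unit-speed geodesics (so $\overline{\nabla}_{\partial_z}\partial_z=0$ and $\overline{\Gamma}^0_{0a}=0$) and the remaining Christoffel symbols of $\de z^2+g_z$ are $\overline{\Gamma}^0_{ij}=-\tfrac12(\dot g_z)_{ij}$, $\overline{\Gamma}^m_{0j}=\tfrac12(g_z)^{mk}(\dot g_z)_{kj}$, and $\overline{\Gamma}^m_{ij}=\Gamma^m_{ij}(g_z)$, the Christoffel symbols of the \emph{fixed}-$z$ slice metric. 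The one delicate point is that, after evaluating at $z=w(x)$, the coefficient $[\Gamma^m_{ij}(g_z)]_{z=w}$ is \emph{not} the Christoffel symbol $\Gamma^m_{ij}(g_w)$ of the pulled-back metric $g_w$ on $\Sigma$: differentiating $(g_w)_{kj}(x)=(g_{w(x)})_{kj}(x)$ in $x^i$ picks up an extra $(\dot g_w)_{kj}\,\partial_i w$, so that
\[
\Gamma^m_{ij}(g_w)=[\Gamma^m_{ij}(g_z)]_{z=w}+\tfrac12\partial_i w\,(g_w)^{mk}(\dot g_w)_{kj}+\tfrac12\partial_j w\,(g_w)^{mk}(\dot g_w)_{ki}-\tfrac12(\dot g_w)_{ij}(\nabla^w w)^m\,.
\]
Substituting this identity makes all the stray $\dot g_w$-terms telescope, and pairing with $\nu_{\Sigma'}$ leaves the clean expression $\sff_{\Sigma'}(X_i,X_j)=W^{-1}(\nabla^w\nabla^w w)_{ij}-\tfrac W2(\dot g_w)_{ij}$, up to a global sign that is fixed by the orientation conventions (those of \cref{Condition3} and of $\dot g_z=2\sff_{\Sigma_z}$); here $\nabla^w\nabla^w w$ is the Hessian of $w$ with respect to $g_w$.

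Finally I would take the trace $H_{\Sigma'}=\widetilde g^{ij}\sff_{\Sigma'}(X_i,X_j)$. Using $\nabla^w_i(|\nabla^w w|_w^2)=2(\nabla^w\nabla^w w)_{ij}(\nabla^w w)^j$ one recombines $W^{-1}\Delta_{g_w}w$ with the Hessian term produced by the rank-one part $-W^{-2}(\nabla^w w)^i(\nabla^w w)^j$ of $\widetilde g^{ij}$ into $\pm\div_{g_w}(\nabla^w w/W)$, while the $\dot g_w$-contributions collapse into $\pm\big(\tfrac12 W\,\tr_{g_w}\dot g_w-\tfrac12 W^{-1}\dot g_w(\nabla^w w,\nabla^w w)\big)$; this is exactly \eqref{eqn:MeanCurvatureGraph}, with the global sign pinned down by requiring that the formula reduce to $H_{\Sigma'}=H_\Sigma=\tfrac12\tr_{g_0}\dot g_0$ when $w\equiv0$. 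The only real obstacle I anticipate is the $[\Gamma(g_z)]_{z=w}$ versus $\Gamma(g_w)$ distinction just described, together with careful bookkeeping of the several $\dot g_w$-terms and of the sign conventions; the rest is a routine, if slightly lengthy, computation.
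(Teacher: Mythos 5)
The paper does not prove this lemma: it simply cites \cite[Proposition 4.1]{PacardSun}. Your computational verification is correct and is essentially the argument carried out in that reference — in particular, you correctly single out and resolve the one genuinely delicate point, the discrepancy between $[\Gamma^m_{ij}(g_z)]_{z=w}$ and $\Gamma^m_{ij}(g_w)$, and your intermediate formula $\sff_{\Sigma'}(X_i,X_j)=W^{-1}(\nabla^w\nabla^w w)_{ij}-\tfrac{W}{2}(\dot g_w)_{ij}$ is what one obtains after the advertised cancellations, with the remaining global sign fixed by the paper's orientation convention exactly as you describe.
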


In the next lemma we compare induced metrics and volume forms at different heights with respect to $\Sigma$ with the background metric $g_0$, with the help of the second fundamental form and Riemann curvature bounds.

\begin{lemma}\label{lem:C1K}
    Assume that \cref{Condition2}, \cref{Condition3}, and \cref{Condition4} above hold.
    There exists $C_1:=C_1(K,a)$ such that the following holds. For every $p\in \Sigma$, and every $v\in T_p\Sigma$ with $g_0(v,v)=1$,
    \begin{equation}\label{eqn:ContolOnGw}
    \begin{aligned}
        |(\dot g_w)_p(v,v)-(\dot g_0)_p(v,v)|&\leq C_1|w(p)|\, ,\\
        |(g_w)_p(v,v)-(g_0)_p(v,v)|&\leq C_1|w(p)|\, .
    \end{aligned}
    \end{equation}
    Moreover,
    \begin{equation}\label{eqn:VolumeAndGradientControl}
    \begin{aligned}
    (1-C_1\|w\|_{\infty})^{n/2} \mathrm{vol}_{g_0} \leq &\mathrm{vol}_{g_w}\leq (1+C_1\|w\|_{\infty})^{n/2}\mathrm{vol}_{g_0}\, ,\\
    |\nabla w|^2 (1- C_1|w|)\leq &|\nabla^w w|^2_w \leq |\nabla w|^2(1+C_1|w|)\, .
    \end{aligned}
    \end{equation}
\end{lemma}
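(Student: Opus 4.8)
The plan is to reduce both inequalities in \eqref{eqn:ContolOnGw} to a one–dimensional integration in the Fermi variable $z$, and then to deduce \eqref{eqn:VolumeAndGradientControl} by elementary linear algebra at a fixed point of $\Sigma$. First I would fix $p\in\Sigma$ and $v\in T_p\Sigma$ with $(g_0)_p(v,v)=1$, and use the fundamental theorem of calculus in $z$ (recalling that $\dot g_z$ and $\ddot g_z:=\partial_z\dot g_z$ are identified with $(0,2)$–tensors on $\Sigma$ via $\Psi_z$ as in \cref{Condition3}) to write
\begin{equation*}
(g_w)_p(v,v)-(g_0)_p(v,v)=\int_0^{w(p)}(\dot g_z)_p(v,v)\de z,\qquad (\dot g_w)_p(v,v)-(\dot g_0)_p(v,v)=\int_0^{w(p)}(\ddot g_z)_p(v,v)\de z .
\end{equation*}
Since $\dot g_z=2\sff_{\Sigma_z}$ and the shape operator $S_z$ of $\Sigma_z$ obeys the Riccati equation $\partial_z S_z=-S_z^2-R(\cdot,\partial_z)\partial_z$ in a Fermi chart, one has (in coordinates on $\Sigma$) $(\ddot g_z)_{ij}=2\big(\sff_{\Sigma_z,ik}\,g_z^{k\ell}\,\sff_{\Sigma_z,\ell j}-R_{izjz}\big)$, so $\ddot g_z$ is a universal expression quadratic in $\sff_{\Sigma_z}$ and linear in the ambient curvature. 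Combining the bounds $\|\sff_\Sigma\|_\infty\le K$ and $|\mathrm{Riem}|\le K$ on $U$ from \cref{Condition3} and \cref{Condition4} with the same Jacobi field/Riccati comparison already used for \eqref{eqn:ControlExponential} produces a constant $\mathcal C=\mathcal C(K,a)$ with $\|\sff_{\Sigma_z}\|_{g_z}\le\mathcal C$ and $\|\partial_z\sff_{\Sigma_z}\|_{g_z}\le\mathcal C$ for all $|z|<a$; using in addition $g_z(v,v)\le\mathcal C$ (which is exactly \eqref{eqn:ControlExponential}), both integrands are bounded by a constant depending only on $K$ and $a$. Integrating over $[0,w(p)]$ and calling this constant $C_1$ gives \eqref{eqn:ContolOnGw}.

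To pass from \eqref{eqn:ContolOnGw} to \eqref{eqn:VolumeAndGradientControl} I would work pointwise and introduce the $g_0$–self-adjoint endomorphism $B$ of $T_p\Sigma$ determined by $(g_w)_p(X,Y)=(g_0)_p(BX,Y)$. If $\mu$ is an eigenvalue of $B$ with $g_0$–unit eigenvector $v$, then $|\mu-1|=|(g_w)_p(v,v)-(g_0)_p(v,v)|\le C_1|w(p)|$ by \eqref{eqn:ContolOnGw}, so every eigenvalue of $B$ lies in $[1-C_1|w(p)|,\,1+C_1|w(p)|]$. Since $\vol_{g_w}/\vol_{g_0}=\sqrt{\det B}$ in any local chart, multiplying the eigenvalues gives the volume–form comparison (the lower bound being of interest only where $C_1\|w\|_\infty<1$, which is the regime in which \cref{lem:C1K} is applied, see \cref{Condition5}). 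For the gradient, the identity $(g_w)_p(\nabla^w w,X)=(g_0)_p(\nabla w,X)$ for all $X$ (both sides being the derivative of $w$ along $X$) forces $\nabla^w w=B^{-1}\nabla w$, hence $|\nabla^w w|_w^2=(g_0)_p(\nabla w,B^{-1}\nabla w)$; as the eigenvalues of $B^{-1}$ lie in $\big[(1+C_1|w|)^{-1},(1-C_1|w|)^{-1}\big]$, combining this with $(1+t)^{-1}\ge 1-t$ and $(1-t)^{-1}\le 1+2t$ for $t\in[0,1/2]$, and enlarging $C_1$ by a factor $2$, yields the second line of \eqref{eqn:VolumeAndGradientControl}.

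The two integrations and the linear algebra are routine; the one point that needs care is the uniform bound $\mathcal C(K,a)$ on $\sff_{\Sigma_z}$ and $\ddot g_z$ over the whole interval $(-a,a)$. This is precisely where the hypothesis of \cref{Condition3}, that the Fermi chart is defined on all of $(-a,a)\times\Sigma$, is used: it rules out focal points of the normal exponential map on that range, so that the Jacobi fields stay nondegenerate and the Riccati solution — squeezed between the solutions of the scalar ODEs $u'=-u^2\pm K$ with initial datum bounded by $K$ — cannot blow up before $z=\pm a$. With this in hand, $C_1$ depends only on $K$ and $a$, as required.
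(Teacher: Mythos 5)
Your proof is correct and follows essentially the same route as the paper's: integrate the Riccati identity $\partial_z\sff_{\Sigma_z}=-\sff_{\Sigma_z}^2-\mathrm{Riem}(\partial_z,\cdot,\partial_z,\cdot)$ over $(-a,a)$ via ODE comparison to get uniform bounds on $\sff_{\Sigma_z}$ and $\partial_z\sff_{\Sigma_z}$, then apply the fundamental theorem of calculus in $z$ together with \eqref{eqn:ControlExponential} to obtain \eqref{eqn:ContolOnGw}. The paper dispatches \eqref{eqn:VolumeAndGradientControl} with a one-line remark ("direct consequence of the second estimate in \eqref{eqn:ContolOnGw}, possibly enlarging $C_1$"), whereas you spell out the eigenvalue argument for the transfer endomorphism $B$; this is a welcome clarification but not a different method.
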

\begin{proof}
    Let us fix some notation. Let $\Sigma^{n-1}$ be a hypersurface in $(M^n,g)$, let $\{e_1,\ldots,e_{n-1}\}$ be an orthonormal frame for $\Sigma$, and let $\mathrm{II}_\Sigma$ be the second fundamental form of $\Sigma$. We define $\sff_\Sigma^2:T\Sigma\times T\Sigma\to\mathbb R$ as follows: for every $X,Y\in T\Sigma$ we set $\sff_\Sigma^2(X,Y):=\sum_{k=1}^{n-1}g(\sff_\Sigma(X,e_k),\sff_\Sigma(Y,e_k))$.  
    
    Recall that the evolution equation of the second fundamental form $\sff_{\Sigma_z}$ is 
\begin{equation}\label{eqn:EvolutionEquation}\partial_z\sff_{\Sigma_z}+\sff_{\Sigma_z}^2+\mathrm{Riem}
    (\partial_z,\cdot,\partial_z,\cdot)=0\, .
    \end{equation}
    Hence, by using \cref{Condition4} and ODE comparison, there is $\mathcal{K}:=\mathcal{K}(K,a)$ such that $\|\sff_{\Sigma_z}\|_{\infty}\leq \mathcal{K}$ for every $z\in (-a,a)$.
    
    Notice that $\dot g_z = \partial_zg_z=2 \sff_{\Sigma_z}$. Via the pull back with the map $\Psi_z$ defined in \cref{Condition3} we are identifying $\dot g_z$ with a $(0,2)$-tensor on $\Sigma$ as explained above. 
    Hence, for every $z\in (-a,a)$, since $|\sect|$ and $\|\sff_{\Sigma_z}\|_{\infty}$ are uniformly bounded above by $K$, and since \eqref{eqn:ControlExponential}, and \eqref{eqn:EvolutionEquation} are in force, we have
    \begin{equation}\label{eqn:Control0}
    |\dot g_z-\dot g_0|\leq \int_0^{|z|} \left|\partial_z\dot g_z\right|=2\int_0^{|z|} \left|\partial_z\sff_{\Sigma_z}\right|\leq C_1(a,K)|z|\, .
    \end{equation}
    Here \eqref{eqn:Control0} has to be understood evaluated at every point $p\in\Sigma$, and at every $v\in T_p\Sigma$ with $(g_0)_p(v,v)=1$. Hence the first estimate of \eqref{eqn:ContolOnGw} holds. Similarly, we get the second inequality in \eqref{eqn:ContolOnGw} by using $\partial_zg_z=2\sff_{\Sigma_z}$, the fact that $\|\sff_{\Sigma_z}\|_{\infty}$ is uniformly bounded above by $\mathcal{K}$, and \eqref{eqn:ControlExponential}. Finally, the two inequalities in \eqref{eqn:VolumeAndGradientControl} are a direct consequence of the second estimate in \eqref{eqn:ContolOnGw}, possibly enlarging $C_1$.
\end{proof}

Our next goal is to exploit the constant mean curvature condition for $\Sigma'$ to obtain $L^{\infty}$--$L^{2}$ and gradient estimates for the graphing function $w$, via elliptic regularity theory. The precise statement follows.

\begin{lemma}\label{lemma:LinftyL2}
Assume \cref{Condition1}, \cref{Condition2}, \cref{Condition3}, \cref{Condition4}, and \cref{Condition5} above hold. Then there exists a constant $C:=C(n,K,a,V,\mathrm{diam}(\Sigma))>0$ such that the following hold. 
\begin{equation}\label{eqn:Moser}
\|w\|_{\infty} \leq C\|w\|_{L^2(\Sigma)}\, ,
\end{equation}
\begin{equation}\label{eqn:Caccioppoli}
\|\nabla w\|_{L^2(\Sigma)} \leq C\|w\|_{L^2(\Sigma)}\, .
\end{equation}
\end{lemma}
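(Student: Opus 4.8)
The plan is to exploit the constant-mean-curvature constraint to view $w$ as a solution of a uniformly elliptic linear equation in divergence form, with coefficients controlled only in terms of $n,K,a$, and then to extract \eqref{eqn:Moser} and \eqref{eqn:Caccioppoli} from classical elliptic estimates on the closed manifold $(\Sigma,g_0)$. \textbf{Step 1 (equation for $w$).} By \cref{Condition1} we have $H_{\Sigma'}=H_\Sigma=:H$, a constant. Evaluating \eqref{eqn:MeanCurvatureGraph} at $w\equiv 0$ gives the identity $\tfrac12\tr_{g_0}\dot g_0=H$, so subtracting it from \eqref{eqn:MeanCurvatureGraph} I would rewrite the CMC condition satisfied by $\Sigma'$ as
\begin{equation*}
\div_{g_w}\!\left(\frac{\nabla^w w}{(1+|\nabla^w w|_w^2)^{1/2}}\right)=\tfrac12\Big[(1+|\nabla^w w|_w^2)^{1/2}-1\Big]\tr_{g_w}\dot g_w-\tfrac12\frac{\dot g_w(\nabla^w w,\nabla^w w)}{(1+|\nabla^w w|_w^2)^{1/2}}+\tfrac12\big(\tr_{g_w}\dot g_w-2H\big).
\end{equation*}
On the right-hand side the first two terms are quadratic in $\nabla w$, whereas the last one is linear in $w$: since $\tfrac12\tr_{g_z}\dot g_z$ equals the mean curvature $H_{\Sigma_z}$ of the parallel hypersurface, and $z\mapsto H_{\Sigma_z}$ satisfies the Riccati (evolution) equation obtained by tracing \eqref{eqn:EvolutionEquation}, \cref{Condition4} and the ODE comparison already used in the proof of \cref{lem:C1K} bound $\|\sff_{\Sigma_z}\|_\infty$ (and $|\ric|$) uniformly on the parallels $\Sigma_z$, $|z|<a$; hence $z\mapsto H_{\Sigma_z}(x)$ is Lipschitz with constant $\le\mathcal C(n,K,a)$ and
\begin{equation*}
\tfrac12\big(\tr_{g_w}\dot g_w-2H\big)(x)=H_{\Sigma_{w(x)}}(x)-H_{\Sigma_0}(x)=:c(x)\,w(x),\qquad\|c\|_\infty\le\mathcal C(n,K,a).
\end{equation*}

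\textbf{Step 2 (ellipticity and coefficient bounds).} Next I would absorb the two quadratic terms into a first-order term: by \cref{lem:C1K} (in particular the second line of \eqref{eqn:VolumeAndGradientControl}) one has $|\nabla^w w|_w\le\mathcal C|\nabla w|$ and $|\dot g_w|=2|\sff_{\Sigma_w}|\le\mathcal C$, so those terms equal $\langle\vec b(x),\nabla w\rangle$ with $|\vec b|\le\mathcal C(n,K,a)\,|\nabla w|\le\mathcal C\eps_0$. Writing the left-hand side in geodesic normal coordinates of $(\Sigma,g_0)$ as $\tfrac{1}{\sqrt{\det g_w}}\,\partial_i\big(\sqrt{\det g_w}\,a^{ij}\partial_j w\big)$ with $a^{ij}=g_w^{ij}(1+|\nabla^w w|_w^2)^{-1/2}$, \cref{lem:C1K} together with the smallness bounds \eqref{eqn:Control1}--\eqref{eqn:Control2} of \cref{Condition5} show that $(a^{ij})$ is uniformly elliptic with respect to $g_0$ and that $\d\vol_{g_w}/\d\vol_{g_0}\in(\tfrac12,2)$, all with constants depending only on $n,K,a$. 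Therefore $w\in C^\infty(\Sigma)$ solves a uniformly elliptic equation $\div(A\nabla w)+\langle\vec b,\nabla w\rangle+cw=0$ on the closed manifold $(\Sigma,g_0)$, with $A$ uniformly elliptic and $\vec b,c$ bounded in $L^\infty$, all by constants depending only on $n,K,a$ --- in particular independently of the specific pair $(E_V,E_V')$.

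\textbf{Step 3 (the two estimates).} For \eqref{eqn:Caccioppoli} I would test the equation with $w$ and integrate over $\Sigma$ (no cutoff is needed, $\Sigma$ being closed): this gives $\int_\Sigma\langle A\nabla w,\nabla w\rangle=\int_\Sigma w\,\langle\vec b,\nabla w\rangle+\int_\Sigma c\,w^2$, and Young's inequality together with the ellipticity of $A$ and the $L^\infty$ bounds on $\vec b,c$ lets me absorb half of $\int_\Sigma|\nabla w|^2$ to the left, which yields \eqref{eqn:Caccioppoli} with $C=C(n,K,a)$. For \eqref{eqn:Moser} I would invoke the classical De Giorgi--Nash--Moser local boundedness estimate at every point of $\Sigma$, in geodesic normal coordinates of $(\Sigma,g_0)$ on balls of a controlled radius --- the local geometry of $(\Sigma,g_0)$ being under control thanks to the two-sided bounds on $\mathrm{Sec}_\Sigma$ (Gauss equation and \cref{Condition4}) and to the Fermi-chart size $a$ --- obtaining $\sup_{B_r(x)}|w|\le C\|w\|_{L^2(\Sigma)}$ with $C$ depending on $n,K,a$ and on the intrinsic geometry of $(\Sigma,g_0)$ (its volume, diameter and injectivity radius), i.e.\ on $V$ and $\mathrm{diam}(\Sigma)$; taking the supremum over $x\in\Sigma$ then gives \eqref{eqn:Moser}.

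The main obstacle lies in Steps 1--2: carefully expanding \eqref{eqn:MeanCurvatureGraph}, identifying the linear-in-$w$ term as $c\,w$ with $\|c\|_\infty$ controlled via the Riccati equation, and checking that \cref{Condition5} indeed forces the resulting equation to be uniformly elliptic with coefficients bounded uniformly in the pair of isoperimetric sets. Once this linear structure is established, Step 3 is routine elliptic theory on a compact manifold of controlled geometry.
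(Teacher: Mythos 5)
Your proposal is correct and leads to the same conclusion, but it packages the argument differently from the paper. Where the paper isolates the pointwise bound $\big|\div_{g_w}(\nabla^w w/(1+|\nabla^w w|_w^2)^{1/2})\big|\le C_2(|w|+|\nabla^w w|_w^2)$ as a separate lemma (\cref{lem:ToApplyMoser}) and then carries out the Moser iteration \emph{by hand} — integrating against $w_+^p$, absorbing the small error using \cref{Condition5}, and applying the global Sobolev--Poincar\'e inequality \eqref{eqn:SobolevPoincareInMoser} on $(\Sigma,g_0)$ — you instead subtract the ``$w\equiv 0$'' identity $\tfrac12\tr_{g_0}\dot g_0=H_\Sigma$ from \eqref{eqn:MeanCurvatureGraph}, identify the linear-in-$w$ term as $H_{\Sigma_{w(x)}}-H_{\Sigma_0}=c(x)w(x)$ via the traced Riccati equation, push the quadratic-in-$\nabla w$ terms into a bounded drift $\vec b$, and thereby exhibit $w$ as a solution of a uniformly elliptic \emph{linear} equation $\div(A\nabla w)+\langle\vec b,\nabla w\rangle+cw=0$ with coefficients controlled by $n,K,a$. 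You then invoke classical Caccioppoli and De Giorgi--Nash--Moser rather than re-deriving the iteration. This is a legitimate and arguably more conceptual route; the core analytic inputs (the CMC identity, the smallness from \cref{Condition5}, the Riccati/ODE comparison bounds, and the controlled geometry of $(\Sigma,g_0)$ needed to make the iteration global) are identical in both.

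Two minor remarks. First, when you pass from local $\sup_{B_r(x)}|w|\le C\|w\|_{L^2(B_{2r}(x))}$ estimates to the global $\|w\|_\infty\le C\|w\|_{L^2(\Sigma)}$, you implicitly need the radius $r$ to be bounded below uniformly over $x\in\Sigma$; as you note, Cheeger's lemma supplies a lower bound on $\inj(\Sigma,g_0)$ from the two-sided sectional bound, the diameter, and a lower volume bound on $\Sigma$ (which comes from $V$ via the isoperimetric inequality), so the constant depends on $(n,K,a,V,\mathrm{diam}\Sigma)$ exactly as allowed. The paper sidesteps this phrasing by using the global Sobolev--Poincar\'e inequality directly, but the underlying dependencies are the same. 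Second, your bound $|\vec b|\le\mathcal C\eps_0$ is stronger than what Step 3 needs: for the Caccioppoli estimate the Young-inequality absorption works for any $L^\infty$ drift, and for the local boundedness one only needs $\vec b\in L^\infty$; the smallness is not essential here, so there is no gap.
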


Before we can prove \cref{lemma:LinftyL2} we need one last technical estimate.

\begin{lemma}\label{lem:ToApplyMoser}
Assume that \cref{Condition1}, \cref{Condition2}, \cref{Condition3}, \cref{Condition4}, and \cref{Condition5} above hold. Then
    there exists a constant $C_2:=C_2(n,K,\eta,a)>0$ such that
    \begin{equation}\label{eqnToApplyMoser}
    \left|\mathrm{div}_{g_w}\left(\frac{\nabla^w w }{\left(1+|\nabla^w w|_w^2\right)^{\frac 12}}\right)\right|\leq C_2(|w|+|\nabla^w w|_w^2)\, .
    \end{equation}
\end{lemma}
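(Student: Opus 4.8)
The plan is to read the divergence off the mean curvature formula \eqref{eqn:MeanCurvatureGraph} and to observe that the constant mean curvature hypothesis makes the only constant term cancel. Solving \eqref{eqn:MeanCurvatureGraph} for the divergence gives
\[
\mathrm{div}_{g_w}\left(\frac{\nabla^w w}{\left(1+|\nabla^w w|_w^2\right)^{\frac 12}}\right)
= -H_{\Sigma'} + \frac 12\left(1+|\nabla^w w|_w^2\right)^{\frac 12}\mathrm{tr}_{g_w}\dot g_w - \frac 12\,\frac{\dot g_w(\nabla^w w,\nabla^w w)}{\left(1+|\nabla^w w|_w^2\right)^{\frac 12}}\, .
\]
By \cref{Condition1} one has $H_{\Sigma'}=H_\Sigma=H_{\partial E_V}$, and since $\dot g_0=2\sff_\Sigma$ one has $\frac12\mathrm{tr}_{g_0}\dot g_0=\mathrm{tr}_{g_0}\sff_\Sigma=H_\Sigma$ (consistently with \eqref{eqn:MeanCurvatureGraph} evaluated at $w\equiv0$). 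Substituting $H_{\Sigma'}=\frac12\mathrm{tr}_{g_0}\dot g_0$ and writing $\left(1+|\nabla^w w|_w^2\right)^{1/2}=1+\left[\left(1+|\nabla^w w|_w^2\right)^{1/2}-1\right]$, I would rewrite the right-hand side as
\[
\frac 12\left(\mathrm{tr}_{g_w}\dot g_w-\mathrm{tr}_{g_0}\dot g_0\right)
+\frac 12\left[\left(1+|\nabla^w w|_w^2\right)^{\frac 12}-1\right]\mathrm{tr}_{g_w}\dot g_w
-\frac 12\,\frac{\dot g_w(\nabla^w w,\nabla^w w)}{\left(1+|\nabla^w w|_w^2\right)^{\frac 12}}\, ,
\]
so that no constant term survives and it remains to bound these three expressions by a multiple of $|w|+|\nabla^w w|_w^2$.

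Next I would estimate the three terms separately. For the last one, $\left(1+|\nabla^w w|_w^2\right)^{1/2}\ge 1$ and $|\dot g_w(\nabla^w w,\nabla^w w)|\le \|\dot g_w\|_{g_w}\,|\nabla^w w|_w^2$, where $\|\dot g_w\|_{g_w}=2\|\sff_{\Sigma_w}\|_\infty\le 2\mathcal{K}(K,a)$ by the ODE comparison for the evolution equation \eqref{eqn:EvolutionEquation} recalled in the proof of \cref{lem:C1K}; hence this term is $\le C(K,a)|\nabla^w w|_w^2$. For the middle one, the elementary bound $|(1+t)^{1/2}-1|\le t/2$ for $t\ge 0$ supplies the factor $\frac12|\nabla^w w|_w^2$, while $|\mathrm{tr}_{g_w}\dot g_w|\le C(n,K,a)$ because $g_w$ and $g_0$ are uniformly comparable by \cref{Condition5} and the second line of \eqref{eqn:ContolOnGw} (so that $(g_w)^{-1}$ is bounded) and $\dot g_w$ is bounded as above; hence this term is $\le C(n,K,a)|\nabla^w w|_w^2$. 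For the first one, working in a $g_0$-orthonormal frame I would split
\[
\mathrm{tr}_{g_w}\dot g_w-\mathrm{tr}_{g_0}\dot g_0 = (g_w)^{ij}\big((\dot g_w)_{ij}-(\dot g_0)_{ij}\big)+\big((g_w)^{ij}-(g_0)^{ij}\big)(\dot g_0)_{ij}\, ,
\]
bound the first summand by $\|(g_w)^{-1}\|\,|\dot g_w-\dot g_0|\le C(n,K,a)|w|$ using the first line of \eqref{eqn:ContolOnGw}, and for the second summand use the identity $(g_w)^{-1}-(g_0)^{-1}=(g_w)^{-1}(g_0-g_w)(g_0)^{-1}$ together with the second line of \eqref{eqn:ContolOnGw} and the comparability of $g_w$ and $g_0$ to obtain $|(g_w)^{ij}-(g_0)^{ij}|\le C(n,K,a)|w|$, while $(\dot g_0)_{ij}$ is bounded; altogether $|\mathrm{tr}_{g_w}\dot g_w-\mathrm{tr}_{g_0}\dot g_0|\le C(n,K,a)|w|$. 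Summing the three estimates yields \eqref{eqnToApplyMoser}.

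The one genuinely delicate point is the estimate $|\mathrm{tr}_{g_w}\dot g_w-\mathrm{tr}_{g_0}\dot g_0|\le C|w|$: besides the pointwise closeness of $\dot g_w$ to $\dot g_0$ provided by \cref{lem:C1K}, one also needs the difference of the inverse metrics to be $O(|w|)$, and this is exactly where the smallness assumption $\|w\|_\infty<\varepsilon_0$ in \cref{Condition5} is used, since it keeps $g_w$ uniformly comparable to $g_0$. Everything else is a routine expansion, and the fact that no constant term appears on the right-hand side is precisely the cancellation $-H_{\Sigma'}+\frac12\mathrm{tr}_{g_0}\dot g_0=0$ afforded by the equality of mean curvatures in \cref{Condition1}.
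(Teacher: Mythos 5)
Your proof is correct and follows essentially the same approach as the paper: both solve \eqref{eqn:MeanCurvatureGraph} for the divergence, use the constant mean curvature hypothesis $H_{\Sigma'}=H_\Sigma=\tfrac12\tr_{g_0}\dot g_0$ to kill the constant term, and bound the remainder via the estimates of \cref{lem:C1K}. The only organizational difference is that you carry out the decomposition symmetrically and estimate absolute values of the three resulting pieces, whereas the paper proves the upper bound on $-\mathrm{div}_{g_w}(\cdots)$ via the one-sided \eqref{eqn:Control3} and dismisses the reverse inequality with ``analogously''; your version is arguably a bit cleaner for that reason, and also makes transparent that the constant need not actually depend on $\eta$.
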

\begin{proof}
    Let us first prove that 
    \begin{equation}\label{eqnToApplyMoser2}
    -\mathrm{div}_{g_w}\left(\frac{\nabla^w w }{\left(1+|\nabla^w w|_w^2\right)^{\frac 12}}\right)\leq C_2(|w|+|\nabla^w w|_w^2)\, .
    \end{equation}
    By \eqref{eqn:Control2} in \cref{Condition5}, and the second estimate in \eqref{eqn:VolumeAndGradientControl}, we have $|\nabla^w w|_w\leq 1$, and $|w|\leq 1$. Taking into account \eqref{eqn:ContolOnGw}, there is a constant $C_3:=C_3(n,K,a)>0$ such that 
    \begin{equation}\label{eqn:Control3}
    \tr_{g_w}\dot g_w\geq \tr_{g_0}\dot g_0 - 2C_3|w|\, .
    \end{equation}
    
    Moreover, taking into account that $H_\Sigma\geq \vartheta>0$, and that $C_3\varepsilon_0<\vartheta$ by \eqref{eqn:Control2}, we get $C_3\|w\|_{\infty}<H_\Sigma$. Hence, again recalling $|\nabla^w w|_w\leq 1$, we have
    \begin{equation}\label{eqn:EQ1}
    \begin{aligned}
    \frac 12\left(1+|\nabla^w w|_w^2\right)^{\frac 12}\mathrm{tr}_{g_w}\dot g_w &\stackrel{\eqref{eqn:Control3}}{\geq} \left(1+|\nabla^w w|_w^2\right)^{\frac 12}\left(\frac 12 \mathrm{tr}_{g_0}\dot g_0-C_3|w|\right)\\
    &=\left(1+|\nabla^w w|_w^2\right)^{\frac 12}\left(H_\Sigma-C_3|w|\right) \\
    &\geq \left(1+\frac 14 |\nabla^w w|^2_w\right)(H_\Sigma-C_3|w|)\geq H_\Sigma - \frac{5}{4}C_3 |w|\, .
    \end{aligned}
    \end{equation}
    In addition, using the second estimate in \eqref{eqn:ContolOnGw}, the fact that $\|\dot g_z\|_{\infty}=2\|\sff_{\Sigma_z}\|_{\infty}$ is uniformly bounded from above by $K$ by \cref{Condition4}, and \eqref{eqn:ControlExponential}, we get, for some $C_4:=C_4(K,a)>0$,
    \begin{equation}\label{eqn:EQ2}
    \frac 12 \frac{\dot g_w(\nabla^w w,\nabla^w w)}{\left(1+|\nabla^w w|_w^2\right)^{\frac 12}} \leq C_4|\nabla^w w|^2_w\, . 
    \end{equation}
    Using that $H_{\Sigma'}=H_\Sigma$, and combining \eqref{eqn:EQ1}, \eqref{eqn:EQ2}, and \eqref{eqn:MeanCurvatureGraph}, we get the sought \eqref{eqnToApplyMoser2} with $C_2:=\max\{C_4,2C_3\}$.
    Analogously, bounding from above $\mathrm{tr}_{g_w}\dot g_w$, and using $H\leq\eta$, one can also get 
    \[
\mathrm{div}_{g_w}\left(\frac{\nabla^w w }{\left(1+|\nabla^w w|_w^2\right)^{\frac 12}}\right)\leq C_2(n,K,\eta,a)(|w|+|\nabla^w w|_w^2)\, ,
    \]
    and thus the proof is concluded.    
\end{proof}

\begin{proof}[Proof of \cref{lemma:LinftyL2}]
    We claim that there exists $C(n,K,a)>0$ such that for every $p\geq 1$ it holds 
    \begin{equation}\label{eqn:Step0Moser}
        p\int_{\Sigma} w_+^{p-1}|\nabla w_+|^2\d \vol_{g_0} \leq C\int_{\Sigma} \left(w_+^{\frac{p+1}{2}}\right)^2 \d \vol_{g_0}\, ,
    \end{equation}
    where $w_+$ denotes the positive part of $w$.
    Recall that $|\nabla^w w|_w\leq 1$ since \eqref{eqn:Control2} and the second estimate in \eqref{eqn:VolumeAndGradientControl} hold. By exploiting the latter information, and integrating \eqref{eqnToApplyMoser} against $w_+^p$, we get
    \begin{equation}\label{eqn:Step1Moser}
        p\int_{\Sigma} w_+^{p-1}|\nabla^w w_+|_w^2\d \vol_{g_w} \leq \sqrt{2}C_2\int_{\Sigma}\left(w_+^{p+1}+w_+|\nabla^w w_+|_w^2w_+^{p-1}\right)\d \vol_{g_w}\, .
    \end{equation}
    By \eqref{eqn:Control2} we get $\sqrt{2}C_2w_+\leq \sqrt{2}C_2\varepsilon_0\leq p/2$, so we can absorb the term in the right hand side of \eqref{eqn:Step1Moser} to obtain
    \begin{equation}\label{eqn:Step1.5Moser}
        p\int_{\Sigma} w_+^{p-1}|\nabla^w w_+|_w^2\d \vol_{g_w} \leq 2\sqrt{2}C_2\int_{\Sigma}w_+^{p+1}\d \vol_{g_w}\, .
    \end{equation}
    By \eqref{eqn:Control1} and the first of \eqref{eqn:VolumeAndGradientControl}, we get that 
    \begin{equation}\label{eqn:Step2Moser}
    \frac 12 \vol_{g_w} \leq \vol_{g_0}\leq 2\vol_{g_w}\, .
    \end{equation}
    Moreover by \eqref{eqn:Control2} and the second estimate of \eqref{eqn:VolumeAndGradientControl}, we get
    \begin{equation}\label{eqn:Step3Moser}
        \frac 12 |\nabla w_+|^2 \leq |\nabla^w w_+|_w^2\, .
    \end{equation}
    By joining \eqref{eqn:Step2Moser} and \eqref{eqn:Step3Moser} with \eqref{eqn:Step1.5Moser}, we finally get the sought \eqref{eqn:Step0Moser}.
\medskip

    We now aim at obtaining \eqref{eqn:Moser} by exploiting \eqref{eqn:Step0Moser}, and a standard Moser iteration argument. By using the Gauss equations, we get $|\sect_{\Sigma}|\leq C(n,K)$, since $\|\sff_{\Sigma}\|_{\infty}+|\sect|\leq K$ by \cref{Condition4}. Hence there exists a constant $C(n,K,V,\mathrm{diam}(\Sigma))$ such that, calling $2^*:=(2n)/(n-2)$,
    \begin{equation}\label{eqn:SobolevPoincareInMoser}
    C\| f-\overline{f}\|_{L^{2^*}(\Sigma,\vol_{g_0})}\leq \|\nabla f\|_{L^2(\Sigma,\vol_{g_0})}, \qquad \forall f\in \mathrm{Lip}(\Sigma).
    \end{equation}
    From now on we abbreviate $\|\cdot\|_s:=\|\cdot\|_{L^s(\Sigma,\vol_{g_0})}$ for any $s\ge 1$. Let $q:=p+1\geq 2$. In the following computations the constants $C,\tilde C$, only depending on $n,K,a,V,\mathrm{diam}(\Sigma)$, might change from line to line. We have
    \begin{equation}\label{eqn:LongEquationMoser}
        \begin{aligned}
            \frac{q}{\sqrt{q-1}}\|w_+\|_q^{q/2} &\stackrel{\eqref{eqn:Step0Moser}}{\geq}C\|\nabla(w_+^{q/2})\|_2 \stackrel{\eqref{eqn:SobolevPoincareInMoser}}{\geq} C^2\left\|w_+^{q/2}-\overline{w_+^{q/2}}\right\|_{2^*}\geq C^2\left\|w_+^{q/2}\right\|_{2^*}-C^2\tilde C\left\|w_+^{q/2}\right\|_1 \\
            &=C^2\left\|w_+\right\|_{nq/(n-2)}^{q/2}-C^2\tilde C\left\|w_+^{q/2}\right\|_1 \geq C^2\left\|w_+\right\|_{nq/(n-2)}^{q/2}-C^2\tilde C^2\|w_+^{q/2}\|_2
            \\ & = C^2\left\|w_+\right\|_{nq/(n-2)}^{q/2} - C^2\tilde C\|w_+\|_q^{q/2}\, .
        \end{aligned}
    \end{equation}
   Set $q^*:=nq/(n-2)$. From \eqref{eqn:LongEquationMoser} we get 
    \begin{equation}\label{eqn:Step4Moser}
    \|w_+\|_{q^*} \leq \left(C\frac{q}{\sqrt{q-1}}+C\right)^{2/q}\|w_+\|_q\, .
    \end{equation}
    Recalling that $p\ge1$ was arbitrary, we can take $p=p_k$ such that $q=q_k=2\left(\frac{n}{n-2}\right)^k$ for $k\geq 0$. Hence applying \eqref{eqn:Step4Moser} with $q=q_k$, and letting $k\to\infty$, the usual Moser iteration scheme gives
    \begin{equation}\label{eqn:MoserFinal}
        \|w_+\|_{\infty}\leq C\|w_+\|_2\, .
    \end{equation}
    Reasoning analogously with $w_-$ in place of $w_+$, we get \eqref{eqn:MoserFinal} for $w_-$ as well. Hence \eqref{eqn:Moser} follows.
\medskip

    We now aim at proving \eqref{eqn:Caccioppoli}. Again, we can integrate \eqref{eqnToApplyMoser} against $w$. Arguing verbatim as we did to obtain \eqref{eqn:Step0Moser} (with $p=1$) we conclude 
    \begin{equation}
    \int_\Sigma |\nabla w|^2\d \vol_{g_0} \leq C \int_\Sigma w^2\d \vol_{g_0}\, ,
    \end{equation}
    which is the sought \eqref{eqn:Caccioppoli}.
\end{proof}

\subsection{Proof of \cref{prop:graphfunction}}
    Let $\mathcal{V}$ be the set of volumes $V\in (0,\infty)$ such that the isoperimetric profile $I_M$ is differentiable at $V$. Recalling that $I_M$ is concave \cite{AntonelliPasqualettoPozzettaSemola1}, then $(0,\infty)\setminus \mathcal{V}$ is a countable set. In particular, $\mathcal{V}$ has full measure in $(0,\infty)$. We divide the proof in two steps.
\medskip

    \textbf{Step 1}. Let $C$ be the constant in \eqref{eq:QCD}. We claim that there exist $\mathcal{C}:=\mathcal{C}(M)>0$, and $\varepsilon_0:=\varepsilon_0(C,\mathrm{AVR},n)>0$ such that the following holds. For every $\varepsilon<\varepsilon_0$, every sequence $\mathcal{V}\ni V_i\to\infty$, and every isoperimetric regions $E_{V_i},E_{V_i}'\subset M$ with $\vol(E_{V_i})=\vol(E_{V_i}')=V_i$, there is $i_0\in\mathbb N$ such that for every $i\geq i_0$, $\tilde\Sigma_{i}':=\partial E_{V_i}'$ is the normal graph over $\tilde\Sigma_i:=\partial E_{V_i}$ of a function $w_i:\tilde\Sigma_i\to\mathbb R$ satisfying
    \begin{equation}\label{eqn:FirstThingProof}
    \norm{w_i}_{\infty}\le \epsilon V_i^{\frac{1}{n}}\, ,\quad \norm{\nabla w_i}_{\infty}\le \epsilon \, ,
\end{equation}
    \begin{equation}\label{eqn:SecondThingProof}
        \norm{w_i}_{\infty}\le \mathcal{C}V_i^{-\frac{n-1}{2n}}\norm{w_i}_{L^2(\tilde\Sigma_i)}\, ,
    \end{equation}
    \begin{equation}\label{eqn:ThirdThingProof}
        \norm{\nabla w_i}_{L^2(\tilde\Sigma_i)}\le \mathcal{C}V_i^{-\frac 1n}\norm{w_i}_{L^2(\tilde\Sigma_i)}\, .
    \end{equation}
    
    Let $p\in M$ be a fixed point. Let $\varepsilon_0$ be small enough to be determined, and let $\varepsilon<\varepsilon_0$. Let us set $r_{V_i}:=\left(\rm{AVR}(M) \, \omega_n\right)^{-\frac{1}{n}}V_i^{\frac{1}{n}}$, and $M_i:=(M^n,r_{V_i}^{-1}\d_g,p)$. Let $\Sigma_i,\Sigma_i'\subset M_i$ denote the sets $\tilde\Sigma_i,\tilde\Sigma_i'\subset M$ in the rescaled manifold $M_i$. By the second estimate in \eqref{eq:Hausconv} we get that, for $i$ large enough,
    \begin{equation}\label{eqn:TheControlWeNeed}
    \begin{aligned}
    \d_{\rm{H}}^{M_i}(\Sigma_i,\Sigma_i')\leq \d_{\rm{H}}^{M_i}(\partial B_{1}^{M_i}(p),&\Sigma_i)+\d_{\rm{H}}^{M_i}(\partial B_{1}^{M_i}(p),\Sigma_i')\le \epsilon \,,\\
    B_{1}^{M_i}(\Sigma_i) &\cup B_1^{M_i}(\Sigma'_i) \subset B_3^{M_i}(p)\, .
    \end{aligned}
    \end{equation}
    By \eqref{eqn:ControlSecondForm}, \eqref{eq:QCD}, and the second line of \eqref{eqn:TheControlWeNeed}, we get that there is a constant $C_1:=C_1(C,\mathrm{AVR})$ only depending on the constant $C$ appearing in \eqref{eq:QCD} and $\mathrm{AVR}$ such that, for $i$ large enough,
    \begin{equation}
        \sup_{x\in \Sigma_i}|\sff(x)| + \sup_{x\in B_1^{M_i}(\Sigma_i)}|\mathrm{Riem}(x)|\leq C_1\, .
    \end{equation}
    Hence, if $\varepsilon_0$ is chosen small enough with respect to $C_1$, we can use the first line in \eqref{eqn:TheControlWeNeed}, and
    the results in \cref{appA} (in particular, we are using \eqref{eqn:GraphU}, \eqref{eqn:AllardGraph}, and \eqref{eqn:Convergence} as stated in the proof of \cref{prop:secondffstable} to $\Sigma_i,\Sigma_i'$) to get the existence of a function $\tilde w_i:\Sigma_i\to\mathbb R$ such that $\Sigma'_i$ is the normal graph over $\Sigma_i$ of $\tilde w_i$, and
    \begin{equation}\label{eqn:FuncGrad}
    \|\tilde w_i\|_{\infty,\Sigma_i}+\|\nabla^{\Sigma_i} \tilde w_i\|_{\infty,\Sigma_i}\leq \varepsilon\, .
    \end{equation}
    Also notice that, since $V_i\in\mathcal{V}$, it holds $H_{\Sigma_i}=H_{\Sigma'_i}\geq C_2$, where $C_2:=C_2(n,\mathrm{AVR})$ and the lower bound comes from \cite[Equation (3.18), Corollary 3.5]{AntonelliPasqualettoPozzettaSemola2}. Thanks to the bounds \eqref{eqn:GraphU}, \eqref{eqn:AllardGraph}, and \eqref{eqn:Convergence} as stated in the proof of \cref{prop:secondffstable}, \eqref{eqn:2SidedWell} holds. Hence \cref{prop:reachbound} applies. Thus \cref{Condition1}, \cref{Condition2}, \cref{Condition3}, and \cref{Condition4} in \cref{sec:estimategraphs} are satisfied, since we also have a uniform upper bound $H_{\Sigma_i}=H_{\Sigma'_i}\leq \tilde C(n,\mathrm{AVR})$ by \cite[Equation (3.17), Corollary 3.5]{AntonelliPasqualettoPozzettaSemola2}. Hence, choosing $\varepsilon_0:=\varepsilon_0(n,C,\mathrm{AVR})$ small enough, we get that \cref{Condition5} is also in force, due to \eqref{eqn:FuncGrad}. By \cref{lem:diaminj}, there is a constant $C_3$ depending on the manifold $M$ such that for $i$ large enough $\mathrm{diam}\Sigma_i\leq C_3$, where $\Sigma_i$ is endowed with the metric induced by the ambient $M_i$. Hence \cref{lemma:LinftyL2} applies and there exists a constant $\mathcal{C}:=\mathcal{C}(n,C,\mathrm{AVR},C_3)$ such that
\begin{equation}\label{eqn:SecondThingProof2}
        \norm{\tilde w_i}_{\infty,\Sigma_i}\le \mathcal{C}\norm{\tilde w_i}_{L^2(\Sigma_i)}\, ,
    \end{equation}
\begin{equation}\label{eqn:ThirdThingProof2}
        \norm{\nabla^{\Sigma_i} \tilde w_i}_{L^2(\Sigma_i)}\le \mathcal{C}\norm{\tilde w_i}_{L^2(\Sigma_i)}\, .
    \end{equation}
    Scaling back we get \eqref{eqn:FirstThingProof}, \eqref{eqn:SecondThingProof}, and \eqref{eqn:ThirdThingProof} are satisfied. This completes the proof of the claim.
    \medskip

    \textbf{Step 2}. Let $\mathcal{V}$ be as at the beginning of the proof, and let $\varepsilon_0,\mathcal{C}$ be as in the \textbf{Step 1} above. We claim that for every $\varepsilon<\varepsilon_0$ there exists $V_0:=V_0(\varepsilon)$ such that for every $\mathcal{V}\ni V > V_0$ the conclusion of \cref{prop:graphfunction} holds with the constant $\mathcal{C}$. This will be enough to conclude the proof.

    Suppose the latter is not true. Hence there is $\varepsilon<\varepsilon_0$ such that there is $\mathcal{V}\ni V_i\to \infty$ such that the conclusion of \cref{prop:graphfunction} with the constant $\mathcal{C}$ is false for every volume $V_i$. This contradicts the claim at the beginning of \textbf{Step 1}, thus completing the proof.

\subsection{Proof of \cref{lemma:maxRicci}}
For the proof of \cref{lemma:maxRicci} we will need to make effective some of the ideas that were introduced for the proof of \cref{lemma:Ricciiso0} above. More precisely, we are going to exploit the asymptotics of the isoperimetric profile of $M$ to estimate the Ricci curvature in the direction normal to the isoperimetric boundaries. With respect to the proof of \eqref{eq:estRiccistab} several additional error terms arise, that need to be carefully controlled.

\medskip

We will denote by $\mathcal{C}>0$ a constant depending on $M$ only that may change from line to line.
Consider a volume $V>0$ such that the isoperimetric profile $I$ of $M$ is differentiable at $V$. Let $E\subset M$ be any isoperimetric set of volume $V$. For $t \in \R$, we denote by $E_t$ the $t$-enlargement of $E$, namely $E_t:= \{ x \in M \st \dist^s_E <t \}$, where $\dist^s_E$ denotes the signed distance from $E$. By \cref{lem:PositiveReach} there exists $\overline{\rho}>0$ independent of $E, V$ such that the reach of $\partial E$ is bounded below by $\overline{\rho}V^{\frac1n}$, for any $V$ large enough. Hence, for $t \in (- \overline{\rho}V^{\frac1n}, \overline{\rho}V^{\frac1n})$ we can compute the perimeter $P(E_t)$ of $t$-enlargements as
\begin{align}
 \nonumber       P^{\frac{n}{n-1}}(E_t) 
        &= P^{\frac{n}{n-1}}(E) + \frac{n}{n-1}P^{\frac{1}{n-1}}(E) t \int_{\partial E} H_{\partial E} + \int_0^t \int_0^s \frac{\d^2}{\d r^2} P^{\frac{n}{n-1}}(E_r) \de r \de s \\
\nonumber        &= P^{\frac{n}{n-1}}(E) + \frac{n\, P^{\frac{n}{n-1}}(E) \, H_{\partial E}}{n-1}\, t + 
        \frac{n}{n-1} \int_0^t \int_0^s 
        \frac{ P^{\frac{2-n}{n-1}}(E_r)}{n-1} \left( \int_{\partial E_r} H_{\partial E_r} \right)^2\de r \de s  +\\
        &\qquad+ 
        \frac{n}{n-1} \int_0^t \int_0^s
         P^{\frac{1}{n-1}}(E_r) \int_{\partial E_r} H_{\partial E_r}^2 - |\sff_{\partial E_r}|^2 - \ric(\nabla \dist^s_E, \nabla \dist^s_E) 
        \de r \de s \, .\label{eq:firstexpansion per}
\end{align}
Note that the last term appearing in \eqref{eq:firstexpansion per} is exactly the term we would like to estimate.
\medskip

Let $J(\cdot):=I^{\frac{n}{n-1}}(\cdot)$, where $I$ is the isoperimetric profile of $M$. Since $E$ is isoperimetric, denoting for brevity by $|\cdot|:= \vol$ the Riemannian volume on $M$, for $t \in (0, \overline{\rho}V^{\frac1n})$ we can estimate
\begin{equation}\label{eq:EspansioneSecondOrderJ}
    \begin{split}
        J&(|E_t|) + J(|E_{-t}|) - 2 J(V)
        \le P^{\frac{n}{n-1}}(E_t) + P^{\frac{n}{n-1}}(E_{-t}) -2 P^{\frac{n}{n-1}}(E)
        \\
        &=  
        \frac{n}{n-1} \int_0^t \int_{-s}^s 
        \frac{ P^{\frac{2-n}{n-1}}(E_r)}{n-1} \left( \int_{\partial E_r} H_{\partial E_r} \right)^2 \\
        &+
         P^{\frac{1}{n-1}}(E_r) \int_{\partial E_r} H_{\partial E_r}^2 - |\sff_{\partial E_r}|^2 - \ric(\nabla \dist^s_E, \nabla \dist^s_E) 
        \de r \de s \\
    &\le \frac{n}{n-1} \int_0^t \int_{-s}^s
         P^{\frac{1}{n-1}}(E_r) \left( \int_{\partial E_r} H_{\partial E_r}^2 \right) - P^{\frac{1}{n-1}}(E_r) \int_{\partial E_r} \ric(\nabla \dist^s_E, \nabla \dist^s_E) 
        \de r \de s\,  .
    \end{split}
\end{equation}
By the isoperimetric inequality on manifolds with nonnegative Ricci and Euclidean volume growth, see e.g. \cite[Corollary 1.3]{BrendleIsoperimetricIneq}, 
and taking into account \cite[Corollary 3.13, Corollary 4.19]{AntonelliPasqualettoPozzettaSemola1} and \cite[Corollary 3.6]{AntonelliBrueFogagnoloPozzetta}, for any $V$ large enough we can further estimate
\begin{equation}\label{eq:zzz}
\begin{split}
    C\,P^{\frac{n}{n-1}}(E_r) &\ge  |E_r| \ge |E_{-|r|}|
\ge V - P(E) \int_0^{|r|} \left( 1- \frac{H_{\partial E}}{n-1} z \right)_+^{n-1} \de z \ge V - c V^{\frac{n-1}{n}}\overline{\rho}V^{\frac1n}
\end{split}
\end{equation}
for some $C, c$ depending on $n, {\rm AVR}(M)$, for any $r \in(- \overline{\rho}V^{\frac1n}, \overline{\rho}V^{\frac1n})$. Hence, up to possibly decreasing $\overline{\rho}$, \eqref{eq:zzz} implies that $P^{\frac{1}{n-1}}(E_r) \ge \mathcal{C} V^{\frac1n}$, for any $r \in(- \overline{\rho}V^{\frac1n}, \overline{\rho}V^{\frac1n})$ and any $V$ large enough. Observe also that the previous argument implies that
\begin{equation}\label{eq:RangeVolumiEr}
    \frac34 V < |E_r| < \frac54 V\, ,
\end{equation}
for any $r \in(- \overline{\rho}V^{\frac1n}, \overline{\rho}V^{\frac1n})$ and any $V$ large enough, up to decreasing $\overline{\rho}$.

Moreover, computing integrals in Fermi coordinates as in the first part of the proof of \cref{thm:unique} (see \eqref{eq:FormaVolumeFermiCoordinates}), it follows from \cref{prop:graphfunction} that
\begin{equation}\label{eq:zzza}
    \begin{split}
    \int_{\partial E_r} \ric(\nabla \dist^s_E, \nabla \dist^s_E)
        \ge \mathcal{C} \int_{\partial E} {\rm ric}(r,x) \de P_{\partial E}(x)\, ,
    \end{split}
\end{equation}
for any $r \in(- \overline{\rho}V^{\frac1n}, \overline{\rho}V^{\frac1n})$, where ${\rm ric}(r,x) := \ric (\nabla \dist^s_E, \nabla \dist^s_E) \circ (\exp_x(r \nu^{\rm ext}_E(x)))$ for any $x \in \partial E$, where $\nu^{\rm ext}_E$ is the unit outer normal along $\partial E$. 

Combining \eqref{eq:zzz} and \eqref{eq:zzza}, for $t \in (0, \overline{\rho}V^{\frac1n})$ we get
\begin{equation*}
    \begin{split}
        \int_0^t \int_{-s}^s &
          P^{\frac{1}{n-1}}(E_r) \int_{\partial E_r} \ric(\nabla \dist^s_E, \nabla \dist^s_E) 
        \de r \de s 
        \ge \mathcal{C}V^{\frac1n} \int_0^t \int_{-s}^s
          \int_{\partial E} {\rm ric} (r,x) \de P_{\partial E}(x) 
        \de r \de s  \\
        &\ge \mathcal{C}V^{\frac1n} \int_{-t}^t \int_{|r|}^t
          \int_{\partial E} {\rm ric} (r,x) \de P_{\partial E}(x) 
        \de s \de r 
        = \mathcal{C}V^{\frac1n} \int_{-t}^t (t-|r|)
          \int_{\partial E} {\rm ric} (r,x) \de P_{\partial E}(x) 
         \de r \\
         &\ge \mathcal{C}V^{\frac1n} \int_{-\frac{t}{2}}^{\frac{t}{2}} (t-|r|)
          \int_{\partial E} {\rm ric} (r,x) \de P_{\partial E}(x) 
         \de r \ge \mathcal{C}V^{\frac1n} \, \frac{t}{2} \int_{-\frac{t}{2}}^{\frac{t}{2}} 
          \int_{\partial E} {\rm ric} (r,x) \de P_{\partial E}(x) 
         \de r \, .
    \end{split}
\end{equation*}
Combining with \eqref{eq:EspansioneSecondOrderJ} we find
\begin{equation}\label{eq:EspansioneSecondOrderJ2}
    \begin{split}
        t\, V^{\frac1n} \int_{-\frac{t}{2}}^{\frac{t}{2}} 
          \int_{\partial E} {\rm ric} (r,x) \de P_{\partial E}(x) 
         \de r  
         &\le
         \mathcal{C} \bigg( \frac{n}{n-1} \int_0^t \int_{-s}^s
         P^{\frac{1}{n-1}}(E_r) \left( \int_{\partial E_r} H_{\partial E_r}^2 \right)  \de r \de s +
         \\
         &\qquad\qquad-(J(|E_t|) + J(|E_{-t}|) - 2 J(V))\bigg)\, .
    \end{split}
\end{equation}

We now aim at estimating the difference $J(|E_t|) + J(|E_{-t}|) - 2 J(V)$. This is the goal of the following lemma, whose proof is postponed to the end of this section.

\begin{lemma}\label{lemma:stimaJ}
In the same assumptions of \cref{lemma:maxRicci} and notation as above, for any $\eta \in (0,1)$ and $\eps>0$ there exists $\mathcal{G}_\eps\subset (0,\infty)$ such that \eqref{eq:DensityOneVolumiRicciBasso} holds and
\begin{align}\label{eq:EspansioneSecondOrderJ3}
  \nonumber   -(J(|E_t|) + J(|E_{-t}|) - 2 J(V))\le & \,  \mathcal{C}\eta P(E) \eps t^2 V^{-\frac1n} \\
     &-
        \int_0^t \int_{-s}^s \frac{n}{n-1} I^{\frac{1}{n-1}}(|E_r|) I'_+(|E_r|)  \left( \int_{\partial E_r} H_{\partial E_r} \right)\de r \de s\, 
\end{align}
for any $V \in \mathcal{G}_\eps$\footnote{We only stress dependence with respect to $\eps$ because $\eta$ is eventually chosen to be a constant depending on $M$.}.
\end{lemma}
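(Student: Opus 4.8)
The plan is to Taylor expand the normalized isoperimetric profile $J=I^{\frac{n}{n-1}}$ along the enlargements $E_r$, isolate the term that appears explicitly in \eqref{eq:EspansioneSecondOrderJ3}, and bound what remains by a maximal-function estimate for $-J''$, whose smallness for a density-$1$ set of volumes is a manifestation of the asymptotic affineness of $J$. First I would set $v(r):=|E_r|$. By \cref{lem:PositiveReach}, $\dist^s_E$ is smooth on $\{|\dist^s_E|<\overline{\rho}V^{\frac1n}\}$, so there $v$ is smooth with $v'(r)=P(E_r)$, $v''(r)=\int_{\partial E_r}H_{\partial E_r}$, while $\tfrac{n}{n-1}I^{\frac{1}{n-1}}(w)I'_+(w)=J'_+(w)$ by the chain rule for right derivatives. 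Since $J$ is concave, its distributional second derivative is a nonpositive measure, written $-\mu$ with $\mu=(-J'')\mathcal L^1+\mu_{\rm sing}\ge0$; and since $v$ is a smooth increasing diffeomorphism onto its image, the second-difference identity for $J\circ v$ reads
\begin{equation*}
J(|E_t|)+J(|E_{-t}|)-2J(V)=\int_0^t\!\int_{-s}^s\Big(v'(r)^2J''(v(r))+J'_+(v(r))\,v''(r)\Big)\,\de r\,\de s+R_{\rm sing},
\end{equation*}
where $R_{\rm sing}\le0$ is the contribution of $\mu_{\rm sing}$, carrying a single bounded factor $v'$. The middle summand equals $\int_0^t\!\int_{-s}^s\tfrac{n}{n-1}I^{\frac{1}{n-1}}(|E_r|)I'_+(|E_r|)\int_{\partial E_r}H_{\partial E_r}$, and the two remaining summands are $\le0$; hence it suffices to prove
\begin{equation*}
-\int_0^t\!\int_{-s}^s v'(r)^2J''(v(r))\,\de r\,\de s-R_{\rm sing}\le\mathcal C\,\eta\,P(E)\,\eps\,t^2V^{-\frac1n}.
\end{equation*}

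To estimate this error I would use that, by \eqref{eq:RangeVolumiEr}--\eqref{eq:zzz} and standard estimates for enlargements (the Riccati equation and \cref{prop:secondffstable}), one has $\tfrac34V<v(r)<\tfrac54V$ and $\mathcal C^{-1}V^{\frac{n-1}{n}}\le v'(r)\le\mathcal C V^{\frac{n-1}{n}}$ whenever $|r|\le\overline{\rho}V^{\frac1n}$. Bounding $v'(r)^2\le\mathcal C V^{\frac{2(n-1)}{n}}$ and changing variables $u=v(r)$ in the inner integral (so $\de r\le\mathcal C V^{-\frac{n-1}{n}}\de u$) gives, for each $s$, $\int_{-s}^s(-J''(v(r)))\,\de r\le\mathcal C V^{-\frac{n-1}{n}}\mu([v(-s),v(s)])$, with $[v(-s),v(s)]\subset(V-\mathcal C sV^{\frac{n-1}{n}},\,V+\mathcal C sV^{\frac{n-1}{n}})$. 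Introducing the truncated two-sided maximal function $\mathcal M\mu(V):=\sup_{0<\sigma\le V/4}\sigma^{-1}\mu\big((V-\sigma,V+\sigma)\big)$ and restricting to $t\le C_0V^{\frac1n}$ with $C_0$ small enough that $\mathcal C C_0V\le V/4$, one gets $\mu([v(-s),v(s)])\le\mathcal C sV^{\frac{n-1}{n}}\mathcal M\mu(V)$, hence
\begin{equation*}
-\int_0^t\!\int_{-s}^s v'(r)^2J''(v(r))\,\de r\,\de s\le\mathcal C V^{\frac{2(n-1)}{n}}\mathcal M\mu(V)\,t^2,
\end{equation*}
and the same computation (with one power of $v'$) gives $-R_{\rm sing}\le\mathcal C V^{\frac{2(n-1)}{n}}\mathcal M\mu(V)\,t^2$. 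Since $P(E)\ge\mathcal C^{-1}V^{\frac{n-1}{n}}$, the asserted bound follows provided $V\,\mathcal M\mu(V)\le c\,\eta\,\eps$ for a suitable $c=c(n,{\rm AVR}(M),C_0)>0$, so I would set $\mathcal G_\eps:=\{V>V_*:V\,\mathcal M\mu(V)\le c\,\eta\,\eps\}$ with $V_*$ large enough that the estimates above are in force.

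It remains to check \eqref{eq:DensityOneVolumiRicciBasso}. By \cite[Corollary 3.6]{AntonelliBrueFogagnoloPozzetta} the right derivative $I'_+$, hence $J'_+$, has a finite limit at infinity, so $\mu$ is finite near infinity and $\delta_k:=\mu\big((2^{k-1},2^{k+2})\big)\to0$. For $V$ in the dyadic block $A_k=(2^k,2^{k+1})$, the truncated $\mathcal M\mu(V)$ only sees $\mu\res(2^{k-1},2^{k+2})$, so $V\,\mathcal M\mu(V)>c\eta\eps$ forces $\mathcal M\big(\mu\res(2^{k-1},2^{k+2})\big)(V)>c\eta\eps\,2^{-(k+1)}$; by the weak-$(1,1)$ Hardy--Littlewood maximal inequality the set of such $V$ has measure $\le C(c\eta\eps)^{-1}2^{k+1}\delta_k$, i.e.\ relative measure in $A_k$ at most $2C(c\eta\eps)^{-1}\delta_k\to0$. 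Covering any interval $(r,2r)$ by a bounded number of such dyadic blocks yields \eqref{eq:DensityOneVolumiRicciBasso}.

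The main obstacle I anticipate is twofold: making the second-difference identity for $J\circ v$ rigorous when $J$ is merely concave — correctly isolating and signing the singular part of $J''$ so it can be absorbed into $\mu$ with the right sign — and choosing the maximal-function condition defining $\mathcal G_\eps$ so that the weak-type estimate, combined with the decay $\delta_k\to0$ (not just finiteness of $\mu$ — this is precisely where the asymptotic affineness of the isoperimetric profile enters), actually delivers a set of density $1$.
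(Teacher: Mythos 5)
Your proposal is correct and follows essentially the same strategy as the paper: Taylor expand $J\circ v$ along enlargements, identify the term containing $\int_{\partial E_r}H_{\partial E_r}$, control the remaining quadratic-in-$v'$ term via a maximal-function bound on $|D^2 J|$, and establish density $1$ through a dyadic decomposition using the decay $|D^2J|((a,\infty))\to 0$ as $a\to\infty$. The only technical variant is that the paper sidesteps the weak second-difference identity by mollifying ($J_\tau=J*\rho_\tau$, Taylor expand classically, then pass $\tau\to 0$), whereas you state the identity directly with a nonpositive singular remainder $R_{\rm sing}$ and bound it by the same maximal function; both routes are rigorous and lead to the same estimate.
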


Let $\eps$ be as in the assumptions of \cref{lemma:maxRicci}. We fix $\eta=\eta(M) \in (0,1)$, whose choice will be clear at the end of the proof of \cref{lemma:maxRicci} (more precisely, see \eqref{eq:lastproofmax} below). Hence let $\mathcal{G}_\eps$ be given by \cref{lemma:stimaJ}. We continue the proof of \cref{lemma:maxRicci} assuming that $V \in \mathcal{G}_\eps$.

Combining \eqref{eq:EspansioneSecondOrderJ2} and \eqref{eq:EspansioneSecondOrderJ3} we deduce
\begin{align}
\nonumber         t\, V^{\frac1n} &\int_{-\frac{t}{2}}^{\frac{t}{2}} 
          \int_{\partial E} {\rm ric} (r,x) \de P_{\partial E}(x) 
         \de r  
         \le \mathcal{C} \bigg(
         \mathcal{C}\eta P(E) \eps t^2 V^{-\frac1n}+ \\
         &+
         \frac{n}{n-1} \int_0^t \int_{-s}^s
         P^{\frac{1}{n-1}}(E_r) \left( \int_{\partial E_r} H_{\partial E_r}^2 \right) 
        - I^{\frac{1}{n-1}}(|E_r|) I'(|E_r|)  \left( \int_{\partial E_r} H_{\partial E_r} \right)  \de r \de s
         \bigg)\, , \label{eq:EspansioneSecondOrderJ4}
\end{align}
for $t \in (0,\overline{\rho}V^{\frac1n})$.
\smallskip

The proof of \cref{lemma:maxRicci} will be completed with the help of the following: 

\begin{lemma}\label{lemma:convuniffinal}
In the same assumptions of \cref{lemma:maxRicci} and notation as above, for any $\eps,\eta>0$ there exists $\hat{V}=\hat{V}(\eps,\eta,M)>0$ such that
\begin{equation}\label{eq:zzConvergenzaUniformeFinale}
    \sup_{r \in (-\overline{\rho}V^{\frac1n}/2,\overline{\rho}V^{\frac1n}/2)}
    P^{\frac{1}{n-1}}(E_r) \left( \int_{\partial E_r} H_{\partial E_r}^2 \right) 
        - I^{\frac{1}{n-1}}(|E_r|) I'_+(|E_r|)  \left( \int_{\partial E_r} H_{\partial E_r} \right) \le \eps \, \eta \, V^{\frac{n-2}{n}}\, ,
\end{equation}
for any $V \ge \hat{V}$.
\end{lemma}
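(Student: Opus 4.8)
The plan is to prove \cref{lemma:convuniffinal} directly, without any blow-down or compactness argument, by combining crude Riccati estimates for the mean curvature of the parallel hypersurfaces $\partial E_r$ with the second fundamental form estimate of \cref{prop:secondffstable}, the elementary inequality $\int_{\partial E_r}H_{\partial E_r}^2\le\|H_{\partial E_r}\|_\infty\int_{\partial E_r}H_{\partial E_r}$, and the asymptotics of the isoperimetric profile. Here $E=E_V$ is smooth by \cref{prop:secondffstable}, with $H_{\partial E}=I'(V)$ constant and comparable to $V^{-1/n}$; I write $\mathcal{C}$ for a constant depending only on $M$ (changing from line to line), $o_V(1)$ for a quantity depending only on $M$ with $o_V(1)\to0$ as $V\to\infty$, and I allow myself to shrink $\overline\rho$ further, depending on $\epsilon$ and $\eta$, since a smaller tubular neighborhood still lies within the reach. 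Using the identity $I^{\frac1{n-1}}(v)I'_+(v)=\tfrac{n-1}{n}J'_+(v)$ for every $v>0$ (with $J=I^{\frac n{n-1}}$), the quantity to estimate is $\Phi(r):=P^{\frac1{n-1}}(E_r)\int_{\partial E_r}H_{\partial E_r}^2-\tfrac{n-1}{n}J'_+(|E_r|)\int_{\partial E_r}H_{\partial E_r}$.

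First I would establish uniform bounds on the enlargements. Starting from the Riccati equation $\partial_r\sff_{\partial E_r}+\sff_{\partial E_r}^2+\mathrm{Riem}(\nabla\dist^s_E,\cdot,\nabla\dist^s_E,\cdot)=0$, the scale-invariant curvature bound $|\mathrm{Riem}|\le\mathcal{C}V^{-\frac2n}$ on $\{\dist(\cdot,\partial E)<\overline\rho V^{1/n}\}$ coming from \eqref{eq:QCD} (this set is at distance comparable to $V^{1/n}$ from $p$), and the bound $V^{\frac2n}\|\sff_{\partial E}\|_\infty^2\le(n-1)(\omega_n\mathrm{AVR}(M))^{\frac2n}+o_V(1)$ from \cref{prop:secondffstable}, a rescaling of this ODE — after shrinking $\overline\rho$ below the blow-up time of the rescaled equation — gives $|\sff_{\partial E_r}|^2\le\mathcal{C}V^{-\frac2n}$, hence also $\ric(\nabla\dist^s_E,\nabla\dist^s_E)\le\mathcal{C}V^{-\frac2n}$, uniformly for $|r|<\overline\rho V^{1/n}/2$. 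Since $\partial_r H_{\partial E_r}=-|\sff_{\partial E_r}|^2-\ric(\nabla\dist^s_E,\nabla\dist^s_E)$, integration yields $\|H_{\partial E_r}-H_{\partial E}\|_\infty\le\mathcal{C}\overline\rho\,V^{-\frac1n}$, so, using $H_{\partial E}^2\le(n-1)|\sff_{\partial E}|^2$, the lower bound on $H_{\partial E}=I'(V)$ and \cref{prop:secondffstable} once more,
\[
0<\inf_{\partial E_r}H_{\partial E_r}\le\|H_{\partial E_r}\|_\infty\le\big(\beta_n+\mathcal{C}\overline\rho+o_V(1)\big)V^{-\frac1n},\qquad\beta_n:=(n-1)(\omega_n\mathrm{AVR}(M))^{\frac1n},
\]
uniformly in $|r|<\overline\rho V^{1/n}/2$, for $\overline\rho$ small. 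Integrating $\tfrac{\d}{\d r}P(E_r)=\int_{\partial E_r}H_{\partial E_r}\le\|H_{\partial E_r}\|_\infty P(E_r)$ and applying Gronwall gives $P^{\frac1{n-1}}(E_r)\le(1+\mathcal{C}\overline\rho)P^{\frac1{n-1}}(E)$, and in particular $0<\int_{\partial E_r}H_{\partial E_r}\le\mathcal{C}V^{\frac{n-2}{n}}$.

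Next I would bring in the profile asymptotics. By \cite[Theorem 1.1]{AntonelliPasqualettoPozzettaSemola1} the function $J$ is concave and by \cite[Corollary 3.6]{AntonelliBrueFogagnoloPozzetta} $J'_+$ has a finite limit $\ell$ at infinity, so concavity forces $J'_+(v)\ge\ell$ for every $v>0$; comparing with the explicit isoperimetric profile of the blow-down cone (whose link has $\mathcal H^{n-1}$-measure $n\omega_n\mathrm{AVR}(M)$) gives $\ell=\big(n(\omega_n\mathrm{AVR}(M))^{\frac1n}\big)^{\frac n{n-1}}$ and $P(E)^{\frac1{n-1}}V^{-\frac1n}\to\big(n(\omega_n\mathrm{AVR}(M))^{\frac1n}\big)^{\frac1{n-1}}$. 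The key point is the algebraic identity $\big(n(\omega_n\mathrm{AVR}(M))^{\frac1n}\big)^{\frac1{n-1}}\beta_n=\tfrac{n-1}{n}\ell$, obtained by substituting $\beta_n=(n-1)(\omega_n\mathrm{AVR}(M))^{\frac1n}$. Bounding $\int_{\partial E_r}H_{\partial E_r}^2\le\|H_{\partial E_r}\|_\infty\int_{\partial E_r}H_{\partial E_r}$ (legitimate since $H_{\partial E_r}>0$), using $J'_+(|E_r|)\ge\ell$ together with $\int_{\partial E_r}H_{\partial E_r}>0$, and cancelling the leading terms via the identity above, I obtain
\[
\Phi(r)\le\Big(P^{\frac1{n-1}}(E_r)\|H_{\partial E_r}\|_\infty-\tfrac{n-1}{n}\ell\Big)\int_{\partial E_r}H_{\partial E_r}\le\big(\mathcal{C}\overline\rho+o_V(1)\big)V^{\frac{n-2}{n}}
\]
uniformly in $|r|<\overline\rho V^{1/n}/2$. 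Given $\epsilon,\eta>0$ I would then choose $\overline\rho=\overline\rho(\epsilon,\eta,M)$ with $\mathcal{C}\overline\rho<\epsilon\eta/2$ (and small enough for the needs above), and $\hat V=\hat V(\epsilon,\eta,M)$ with $o_V(1)<\epsilon\eta/2$ and such that \cref{prop:secondffstable} and \cref{lem:PositiveReach} apply for $V\ge\hat V$; taking the supremum over $r\in(-\overline\rho V^{1/n}/2,\overline\rho V^{1/n}/2)$ yields \eqref{eq:zzConvergenzaUniformeFinale}.

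The hard part is the upper bound on $\|H_{\partial E_r}\|_\infty$, equivalently on $\int_{\partial E_r}H_{\partial E_r}^2$: the term $\ric(\nabla\dist^s_E,\nabla\dist^s_E)$ in the Riccati equation is only $\mathcal O(V^{-2/n})$ and not $o(V^{-2/n})$ in general — this is exactly the phenomenon behind \cref{thm:counterexamples} — so the deviation of $H_{\partial E_r}$ from $H_{\partial E}$ is controlled only by $\mathcal{C}\overline\rho V^{-1/n}$, which forces the restriction to a scale-invariant neighborhood of $\partial E_V$ whose size depends on the target accuracy $\epsilon\eta$. It is worth stressing that, in contrast with the volume-averaged statements, no smallness or sign control of the Ricci term is needed here; that enters only later, when \cref{lemma:convuniffinal} is combined with \cref{lemma:stimaJ} in the proof of \cref{lemma:maxRicci}.
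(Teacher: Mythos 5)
Your Riccati argument has a genuine gap: for the \emph{fixed} $\overline\rho$ in the lemma's statement (the reach constant from \cref{lem:PositiveReach}, possibly shrunk earlier depending only on $M$), your chain of estimates gives $\Phi(r)\le(\mathcal{C}\overline\rho+o_V(1))V^{\frac{n-2}{n}}$, and the term $\mathcal{C}\overline\rho$ does not vanish as $V\to\infty$. The obstruction is precisely where you point it out: the Riccati equation for $H_{\partial E_r}$ carries an error $\ric(\nabla\dist^s_E,\nabla\dist^s_E)=\mathcal{O}(V^{-2/n})$ which is \emph{not} $o(V^{-2/n})$, so integrating over $|r|<\overline\rho V^{1/n}$ accumulates an error of order $\overline\rho V^{-1/n}$ in $\|H_{\partial E_r}\|_\infty$; the elementary bound $\int H^2\le\|H\|_\infty\int H$ together with $J'_+\ge\ell$ then leaves $\Phi(r)\le(P^{\frac{1}{n-1}}(E_r)\|H_{\partial E_r}\|_\infty-\tfrac{n-1}{n}\ell)\int H$, and the bracket is bounded below away from $0$ by $\sim\mathcal{C}\overline\rho\,\ell$ in the worst case, uniformly in $V$. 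You cannot remove this by only knowing $|{\rm Riem}|\lesssim V^{-2/n}$. You recognize this and propose to shrink $\overline\rho=\overline\rho(\eps,\eta)$, but that proves a strictly weaker statement than \cref{lemma:convuniffinal}: the resulting radius $C_0$ in \eqref{eq:stimamassimale} would then depend on $\eps$ (roughly $C_0\sim\eps\eta$ after your choices), and the application in Step~3 of the uniqueness proof requires $\|w_i\|_\infty\le\eps V_i^{1/n}<C_0V_i^{1/n}$ for the \emph{same} $\eps$, which is no longer automatic. So the cascade of constants does not close with your version.

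What you are missing is a mechanism that forces $\|H_{\partial E_r}\|_\infty$ (equivalently $\Delta\dist^s_E$ at distance $r$) to converge to the cone value $\frac{n-1}{1+r}$ uniformly over the full range $|r|<\overline\rho V^{1/n}/2$, with an error vanishing as $V\to\infty$. The paper obtains exactly this by a blow-down contradiction argument: after rescaling, the Laplacian $\Delta f_i$ of the signed distance converges weakly in $L^2$ to $\frac{n-1}{\dist_o^X}$ by the Ambrosio--Honda stability theory, and — crucially — the Laplacian comparison from ${\rm Ric}\ge0$ gives the two-sided sign control \eqref{eq:zzLaplacianBounds}, which upgrades weak to strong $L^2$ convergence; a Lipschitz bound from the Riccati equation then promotes this to uniform convergence along almost every geodesic. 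Your closing remark that ``no smallness or sign control of the Ricci term is needed here'' is therefore incorrect: the sign control encoded by ${\rm Ric}\ge0$ is exactly the ingredient the paper uses to beat the $\mathcal{O}(\overline\rho)$ barrier you run into. Without it, or without some other way to see that the effective Riccati error averages out over the fixed-size tubular neighborhood, the proof does not close.
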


Applying \cref{lemma:convuniffinal}, whose proof is postponed to the end of this section, with $\eps$ as in the assumptions of \cref{lemma:maxRicci} and $\eta$ as fixed above, we can combine \eqref{eq:EspansioneSecondOrderJ4} and \eqref{eq:zzConvergenzaUniformeFinale} and obtain the following. For $t \in (0,\overline{\rho}V^{\frac1n}/2)$, for any $V \in \mathcal{G}_\eps$, up to replacing $\mathcal{G}_\eps$ with $\mathcal{G}_\eps \cap (\hat{V},\infty)$, it holds
\begin{equation}\label{eq:EspansioneSecondOrderJ5}
    \begin{split}
        t\, V^{\frac1n} \int_{-\frac{t}{2}}^{\frac{t}{2}} 
          \int_{\partial E} {\rm ric} (r,x) \de P_{\partial E}(x) 
         \de r  
         &\le \mathcal{C} \left(
         \eta P(E) \eps t^2 V^{-\frac1n}+ 
            \eps \eta V^{\frac{n-2}{n}} t^2 \right) \\
            &\le 
        \mathcal{C} 
         \eta P(E) \eps t^2 V^{-\frac1n}\,  .
    \end{split}
\end{equation}
Exploiting Fermi coordinates again, arguing similarly as in \eqref{eq:zzza}, we conclude that
\begin{equation}\label{eq:lastproofmax}
V^{\frac2n} \fint_{B_r(\partial E)} \ric(\nabla \dist^s_E, \nabla \dist^s_E) \le \mathcal{C}\eta \eps \le \eps\, ,
\end{equation}
for any $r \in (0, C_0 V^{\frac1n})$ for some $C_0>0$. Note that in \eqref{eq:lastproofmax} above we estimated $\mathcal{C}\eta \le1$ for a choice of $\eta$ depending only on the last constant $\mathcal{C}$ appeared, hence depending only on $M$. This completes the proof of \cref{lemma:maxRicci}, modulo the proofs of \cref{lemma:stimaJ} and \cref{lemma:convuniffinal} whose details are discussed below.
\medskip

\begin{proof}[Proof of \cref{lemma:stimaJ}]
Denote by $D^2 J$ the (negative) finite measure given by the distributional second derivative of $J$.   For $W>4$ we consider the maximal function $M_{W}$ of $\chi_{(W/4,+\infty)} |D^2J| $, namely
\begin{equation}
M_{W}(v) := \sup_{h>0} \frac{1}{2h}|D^2J|((v-h,v+h)\cap (W/4,\infty)).
\end{equation}
Denoting by $|A|:=\mathcal{L}^1(A)$ for any measurable set $A \subset \R$, the classical Hardy--Littlewood inequality (whose proof works also for measures, cf. \cite[Theorem 3.5.6]{HKST})  implies
\begin{equation}\label{eq:zzMaximal}
\begin{split}
    &\left| \left\{ 
v \in \R \st M_{W}(v) > W^{-1} \left( |D^2J|(W/4 , +\infty) \right)^{\frac12}
\right\} \right|
\frac{ \left(|D^2J|(W/4 , +\infty) \right)^{\frac12}}{W}\\
&\qquad:=|X_W|\frac{ \left(|D^2J|(W/4 , +\infty) \right)^{\frac12}}{W} \le 5 |D^2J|(W/4, +\infty)\, .
\end{split}
\end{equation} 
Notice that if $v\notin X_W$, then 
\begin{equation}\label{eq:Massimale1}
    \sup_{h>0} \frac{1}{2h}|D^2 J| ((v-h,v+h) \cap (W/4,\infty))
    \le W^{-1} \left( |D^2J|(W/4 , +\infty) \right)^{\frac12}\, .
\end{equation}
Define recursively $W_1=5$, $W_{i+1}=2W_i$, and let $\widetilde X_i = X_{W_i} \cap (W_i, W_{i+1})$.
\smallskip

Let $\eps>0$, $\eta \in (0,1)$.  There exists $i_\eps\ge 1$ such that $\left( |D^2J|(W_i/4 , +\infty) \right)^{\frac12} \le \eps\,\eta$ for any $i \ge i_\eps$\footnote{We only stress dependence with respect to $\eps$ because $\eta$ is eventually chosen to be a constant depending on $M$.}. We claim that the set $\mathcal G_\eps$ can be chosen as
\begin{equation}
    \mathcal G_\eps
    := \{ V>W_{i_\eps} \st \exists\, I'(V) \}  \setminus \bigcup_{i \ge i_\eps} \widetilde X_i\, .
\end{equation}

Indeed, we first note that for any $z> W_{i_\eps}$ large, writing $W_{i_z}\le z < W_{i_z+1}$ for some $i_z$, it holds
\begin{equation*}
\begin{split}
    \left| (z,2z) \cap\bigcup_{i \ge i_\eps} \widetilde X_i\right| &\le \left| \widetilde X_{i_z} \right| +  \left| \widetilde X_{i_z+1} \right| 
    \\&\le 5 W_{i_z}\left( |D^2J|(W_{i_z}/4 , +\infty) \right)^{\frac12} +
    5 W_{i_z+1}\left( |D^2J|(W_{i_z+1}/4 , +\infty) \right)^{\frac12} \\
    &\le 10 W_{i_z+1} \left( |D^2J|(W_{i_z}/4 , +\infty) \right)^{\frac12} \\
    &\le     20 z  \left( |D^2J|(z/8 , +\infty) \right)^{\frac12}\, .
\end{split}
\end{equation*}
Therefore $\lim_{z\to+\infty} \mathcal{L}^1\left( (z,2z) \cap (\cup_{i \ge i_\eps} \widetilde X_i)\right)/z =0$ and \eqref{eq:DensityOneVolumiRicciBasso} follows.
\smallskip

Hence let $V \in \mathcal{G}_\eps$ and let $E \subset M$ be any isoperimetric set of volume $V$, as considered above. Let $i_0$ be such that $W_{i_0} < V \le W_{i_0+1}$. Recalling \eqref{eq:RangeVolumiEr}, let $h_r:= \max\{V-|E_{-r}|, |E_r|-V\}< V/4$ for $r \in (0,\overline{\rho} V^{\frac1n})$. 
Since $V-h_r \ge \tfrac34 V > W_{i_0}/2 $, \eqref{eq:Massimale1} implies
\begin{equation*}
    \begin{split}
        |D^2J|(|E_{-r}|, |E_r|)
        &\le |D^2J|(V-h_r,V+h_r) = |D^2J|((V-h_r,V+h_r) \cap (W_{i_0}/4, \infty) ) \\
        &\overset{\eqref{eq:Massimale1}}{\le}
        \frac{2 h_r}{W_{i_0}} \eps\,\eta 
        \le \frac{4 h_r}{V} \eps\,\eta\, .
    \end{split}
\end{equation*}
Moreover, arguing similarly as in \eqref{eq:zzz}, one estimates $h_r \le \mathcal{C} P(E) r$ (see \cite[Corollary 3.13]{AntonelliPasqualettoPozzettaSemola1}). Hence
\begin{equation}\label{eq:Massimale2}
     |D^2J|(|E_{-r}|, |E_r|) \le \mathcal{C} \eta P(E) \frac{\eps}{V} r\, ,
\end{equation}
for $r \in (0,\overline{\rho} V^{\frac1n})$.

Let us now conclude the proof with a mollification argument. Let $\rho_\tau:\R\to\R$, for $\tau\in(0,1)$ be a family of non-negative symmetric mollifiers with support in $(-\tau,\tau)$.
Let $J_\tau:= J * \rho_\tau:(1,\infty) \to \R$.  Hence $J_\tau$ is smooth, and $J_\tau'' = D^2 J * \rho_\tau$. Since $J$ is concave, $D^2 J\leq 0$ in the sense of distributions. Since the convolution with non-negative mollifiers preserves the sign of a distribution we also get that $J_\tau''\leq 0$. Thus $J_\tau$ is concave.

Performing a Taylor expansion of $J_\tau(|E_t|)$ with respect to $t$  as done in \eqref{eq:firstexpansion per} for $P^{\frac{n}{n-1}}(E_t)$, using 
\cite[Proposition 3.11]{AntonelliPasqualettoPozzettaSemola1} and \cite[Corollary 3.6]{AntonelliBrueFogagnoloPozzetta}, up to taking a larger $i_\eps$, for $t \in (0,\overline{\rho}V^{\frac1n})$ we can estimate
\begin{equation*}
    \begin{split}
        -(J_\tau&(|E_t|) + J_\tau(|E_{-t}|) - 2 J_\tau(V)) = \int_0^t \int_{-s}^s |J_\tau''|(|E_r|) P(E_r)^2 - J_\tau'(|E_r|) \left( \int_{\partial E_r} H_{\partial E_r} \right)\de r \de s \\
        &\le \left( 1+ \frac{H_{\partial E}}{n-1}\overline{\rho} V^{\frac{1}{n}}\right)P(E)\int_0^t \int_{-|E_s|}^{|E_s|} |J_\tau''|(v) \de v \de s 
        -\int_0^t \int_{-s}^s J_\tau'(|E_r|) \left( \int_{\partial E_r} H_{\partial E_r} \right)\de r \de s \\
        & \le \mathcal{C}P(E) \int_0^t \int_{-|E_s|}^{|E_s|} |J_\tau''|(v) \de v \de s 
        -\int_0^t \int_{-s}^s J_\tau'(|E_r|) \left( \int_{\partial E_r} H_{\partial E_r} \right)\de r \de s \, .
    \end{split}
\end{equation*}
Letting $\tau\to0$ we get
\begin{align}
 \nonumber       -(J(|E_t|) + J(|E_{-t}|) - 2 J(V))
        \overset{\eqref{eq:Massimale2}}{\le}&
        \mathcal{C}\eta P(E)^2 \frac{\eps}{V} t^2 - \int_0^t \int_{-s}^s J'_+(|E_r|) \left( \int_{\partial E_r} H_{\partial E_r} \right)\de r \de s \\
  \nonumber      \le &\, \, \mathcal{C}\eta P(E) \eps t^2 V^{-\frac1n}\\
      &  -
        \int_0^t \int_{-s}^s \frac{n}{n-1} I^{\frac{1}{n-1}}(|E_r|) I'_+(|E_r|)  \left( \int_{\partial E_r} H_{\partial E_r} \right)\de r \de s\, .
\end{align}   
\end{proof}

\medskip

\begin{proof}[Proof of \cref{lemma:convuniffinal}]

Note that \eqref{eq:zzConvergenzaUniformeFinale} is a scale invariant estimate. Hence to prove it we can argue by contradiction and assume that there exist sequences $V_i\nearrow\infty$, $r_i \in  (-\overline{\rho}V_i^{\frac1n}/2,\overline{\rho}V_i^{\frac1n}/2)$, and isoperimetric sets $E_i$ of volume $V_i$ such that
\begin{equation}
 P^{\frac{1}{n-1}}(E_{r_i}) \left( \int_{\partial E_{r_i}} H_{\partial E_{r_i}}^2 \right) 
        - I^{\frac{1}{n-1}}(|E_{r_i}|) I'_+(|E_{r_i}|)  \left( \int_{\partial E_{r_i}} H_{\partial E_{r_i}} \right) > \eps\, \eta V_i^{\frac{n-2}{n}}\, ,
\end{equation}
where $E_{r_i}$ is the $r_i$-enlargement of $E_i$ in $M$.
In the sequence of rescaled manifolds $M_{V_i}$ this means that there exists a sequence of isoperimetric sets $\widetilde E_i\subset M_{V_i}$ such that $|\widetilde E_i| = {\rm AVR}(M) \omega_n$, and a sequence $\widetilde r_i \in (-\widetilde{\rho},\widetilde{\rho})$ such that
\begin{equation}\label{eq:zzFinale0}
 P_i^{\frac{1}{n-1}}(\widetilde E_{\widetilde r_i}) \left( \int_{\partial \widetilde E_{\widetilde r_i}} H_{\partial \widetilde E_{\widetilde r_i}}^2 \right) 
        - I_i^{\frac{1}{n-1}}(| \widetilde E_{\widetilde r_i}|) (I_i)_+'(|\widetilde E_{\widetilde r_i}|)  \left( \int_{\partial \widetilde E_{\widetilde r_i}} H_{\partial\widetilde E_{\widetilde r_i}} \right) > \mathcal{C}\eps\, \eta \, ,
\end{equation}
where perimeters, volumes, distances and isoperimetric profile are computed on $M_{V_i}$. 
\smallskip

The key for the proof will be the stability of the signed distance functions from isoperimetric sets and their Laplacians under Gromov--Hausdorff convergence.

\smallskip

Up to the extraction of a subsequence that we do not relabel, $M_{V_i}$ converges to an asymptotic cone $X$, with isoperimetric profile $I_X$, and $\widetilde r_i\to r_\infty$. Moreover $I_i\to I_X$ locally uniformly on $(0,\infty)$. Since the right derivative $(I_i^{\frac{n}{n-1}})_+'$ converges to the constant $n^{\frac{n}{n-1}}(\theta \omega_n)^{\frac{1}{n-1}}$ almost everywhere and $I_i^{\frac{n}{n-1}}$ is concave, where $\theta:={\rm AVR}(M)$, then also $(I_i^{\frac{n}{n-1}})_+'\to n^{\frac{n}{n-1}}(\theta \omega_n)^{\frac{1}{n-1}}$ locally uniformly on $(0,\infty)$. Hence $I_i^{\frac{1}{n-1}}(I_i)'_+\to (n\theta \omega_n)^{\frac{1}{n-1}}(n-1)$ locally uniformly on $(0,\infty)$. 
\smallskip

Denoting $H_i:= H_{\partial \widetilde E_i}$, we know from \cite[Theorem 4.23]{AntonelliPasqualettoPozzettaSemola1} that $\widetilde E_{\widetilde r_i}$ converges to the ball $B_{(1+r_\infty)}^X$ in $X$ centered at the tip, and $H_i\to n-1$. Moreover by \cite[Proposition 3.11]{AntonelliPasqualettoPozzettaSemola1} we get
\begin{equation}\label{eq:zzFinale1}
\begin{split}
    n\theta\omega_n (1+r_\infty)^{n-1} &= P_X(B^X_{1+r_\infty}) \le \liminf_i P_i( \widetilde E_{\widetilde r_i})\\
    &\le \liminf_i \left( 1 + \frac{H_i}{n-1}r_\infty\right)^{n-1} P_i (\widetilde E_i) \\
    &=  n\theta\omega_n (1+r_\infty)^{n-1} \, .
\end{split}
\end{equation}
Finally, by \cite[Theorem 1.2]{AntonelliPasqualettoPozzettaSemola1},
\begin{equation}\label{eq:zzLaplacianBounds}
    \Delta f_i - \frac{H_i}{1+ \frac{H_i}{n-1} f_i} \ge 0 \quad\text{on $\{0\ge f_i >-2\widetilde\rho\}$}\, ,
    \quad
    \frac{H_i}{1+ \frac{H_i}{n-1} f_i} -  \Delta f_i \ge 0 \quad\text{on $\{0\le f_i <2\widetilde\rho\}$}\, ,
\end{equation}
where $f_i$ is the signed distance function from $\widetilde E_i$. Moreover, the Riccati equation for the evolution of $\Delta f_i$ along geodesics perpendicular to $\partial \widetilde E_i$, together with \cref{prop:secondffstable}, implies that $\Delta f_i$ is uniformly bounded on $\{-2\widetilde \rho < f_i < 2\widetilde \rho\}$  (see also the proof of \cref{lem:C1K}). By \cite[Proposition 1.3.1]{AmbrosioHondastab}, it follows that, up to subsequences, $\Delta f_i$ converges weakly in $L^2$ in the sense of \cite{AmbrosioHondastab} along the sequence of spaces $(X_{i,\widehat\rho}:=\{-\widehat \rho \le f_i \le \widehat \rho\}, \dist_{M_{V_i}}) \to (\overline{B}^X_{1+\widehat \rho} \setminus B^X_{1- \widehat \rho} , \dist_X)$
for any $\widehat \rho \in (0,2\widetilde\rho)$. By the divergence theorem, it follows that the $L^2$-weak limit of $\Delta f_i$ coincides with the Laplacian from the tip $o$ on $X$, i.e., the function $\frac{n-1}{\dist^X_o}$.

Letting $\chi_i = {\rm sgn}(f_i)$, then $\chi_i \to \chi$ in $L^2$-strong where $\chi=1$ on $X \setminus B^X_1$ and $\chi=-1$ on $B^X_1$. It follows from \eqref{eq:zzLaplacianBounds} that
\begin{equation}
    \begin{split}
        \int_{X_{i,\widehat\rho}} \bigg|  \frac{H_i}{1+ \frac{H_i}{n-1} f_i} 
        - \Delta f_i\bigg| 
       & =
        \int_{X_{i,\widehat\rho}} \chi_i \bigg( \frac{H_i}{1+ \frac{H_i}{n-1} f_i} - \Delta f_i \bigg)\\
&        \xrightarrow[i\to \infty]{} \int_{\overline{B}^X_{1+\widehat \rho} \setminus B^X_{1- \widehat \rho} }
        \chi \bigg( \frac{n-1}{\dist^X_o} - \frac{n-1}{\dist^X_o} \bigg) =0\, ,
    \end{split}
\end{equation}
where the convergence follows from \cite[Proposition 1.3.3(d)]{AmbrosioHondastab}. Hence \cite[Proposition 1.3.3(e)]{AmbrosioHondastab} implies that $\Delta f_i$ converges strongly in $L^2$.

Using Fermi coordinates as in the proof of \cref{thm:unique} and Fubini's theorem, it follows that the restriction of $\frac{H_i}{1+ \frac{H_i}{n-1} f_i} - \Delta f_i$ on almost every geodesic from $\partial \widetilde E_i$ tends to zero strongly in $L^1$ along the geodesic. Since $\Delta f_i$ are (uniformly) Lipschitz along geodesics from $\partial \widetilde E_i$ by the Riccati equation, then $\frac{H_i}{1+ \frac{H_i}{n-1} f_i} - \Delta f_i$ tends to zero strongly in $L^2$ (actually, uniformly) along almost every geodesic from $\partial \widetilde E_i$. In particular, by using the Dominated Convergence Theorem,
\begin{equation}\label{eq:zzFinale2}
    \begin{split}
        &\int_{\partial\widetilde E_{\widetilde r_i}} H_{\partial \widetilde E_{\widetilde r_i}}^2  
        \to
        (n-1)^2 n \theta\omega_n (1+ r_\infty)^{n-3}\, , 
        \qquad
        \int_{\partial \widetilde E_{\widetilde r_i}} H_{\partial \widetilde E_{\widetilde r_i}} \to
        (n-1) n \theta\omega_n (1+ r_\infty)^{n-2}\, .
    \end{split}
\end{equation}
Inserting \eqref{eq:zzFinale1}, \eqref{eq:zzFinale2} and the convergence properties for the profile $I_i$ collected above into \eqref{eq:zzFinale0} and taking the limit $i\to\infty$, we get a contradiction, thus completing the proof of \eqref{eq:zzConvergenzaUniformeFinale}.

\end{proof}

\appendix

\section{Quantitative regularity of almost minimizers in almost Euclidean manifolds}\label{appA}

The goal of this appendix is to discuss a series of effective regularity estimates for boundaries of isoperimetric sets in Riemannian manifolds satisfying suitable curvature bounds. These estimates play an important role in the context of the present paper. 

We note that similar effective regularity estimates for perimeter minimizing hypersurfaces and stable minimal or constant mean curvature hypersurfaces have been considered in the previous literature under different sets of assumptions. We address the reader for instance to \cite{RosenbergSouamToubiana,CheegerNabercurrents,ChodoshStrykerLi}, without the aim of being exhaustive in this list.
\medskip

Let us recall the following theorem, which comes directly from results of Anderson (see, e.g., \cite[Theorem 3.2]{Anderson}), and Cheeger--Colding \cite{CheegerColding96}.
\begin{thm}\label{thm:HarmonicRadius}
    For every $n\geq 2$, $\delta>0$, $C>0$, $\alpha\in [0,1)$ there exists $\varepsilon_0:=\varepsilon_0(n,\delta,C,\alpha)<1$ such that the following holds. 

    Let $(M^n,g,p)$ be a pointed $n$-dimensional Riemannian manifold such that 
    \begin{equation}
    -\varepsilon_0\leq \ric \leq C \quad \text{on $B_2(p)$}\, , 
    \qquad \mathrm{d}_{\mathrm{GH}}(B_2(p),B_2(0^n))\leq \varepsilon_0\, .
    \end{equation}
    Then there exists a (harmonic) chart around $p$ such that 
        \begin{equation}
        \|g_{ij}-\delta_{ij}\|_{C^{1,\alpha}(B_1(0^n))} \leq \delta\, .
        \end{equation}
\end{thm}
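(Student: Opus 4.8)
The plan is to argue by contradiction and reduce the statement to the combination of the continuity of the volume under Gromov--Hausdorff convergence with a lower Ricci bound \cite{Coldingvolume,CheegerColding96} and of Anderson's $\varepsilon$-regularity theorem, which produces harmonic coordinates with a $C^{1,\alpha}$-controlled metric under two-sided Ricci bounds and noncollapsing \cite{Anderson}. So, assume the statement failed for some $n\ge 2$, $\delta>0$, $C>0$, $\alpha\in[0,1)$: then there would exist pointed Riemannian manifolds $(M_i^n,g_i,p_i)$ with $-1/i\le\ric_{g_i}\le C$ on $B_2(p_i)$ and $\d_{\mathrm{GH}}(B_2(p_i),B_2(0^n))\to 0$, but such that no harmonic chart around $p_i$ satisfies $\|g_{jk}-\delta_{jk}\|_{C^{1,\alpha}(B_1(0^n))}\le\delta$. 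First I would fix once and for all an auxiliary exponent $\beta\in(\alpha,1)$, and obtain all the uniform bounds below at the better exponent $\beta$, so that the final compactness step yields convergence at the exponent $\alpha$.

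Next I would establish the noncollapsing: by the continuity of the volume measure under measured Gromov--Hausdorff convergence of manifolds with $\ric_{g_i}\ge -1/i\to 0$, the hypothesis $\d_{\mathrm{GH}}(B_2(p_i),B_2(0^n))\to 0$ forces $\vol_{g_i}(B_1(p_i))\to\omega_n$, hence $\vol_{g_i}(B_1(p_i))\ge\omega_n/2$ for $i$ large; setting $\Lambda:=\max\{1,C\}$, the sequence then satisfies $|\ric_{g_i}|\le\Lambda$ on $B_2(p_i)$ and is uniformly noncollapsed near $p_i$. Then I would invoke Anderson's theorem: the $C^{1,\beta}$ harmonic radius of $(M_i,g_i)$ at $p_i$ is bounded below by a positive constant depending only on $n,\Lambda,\beta$; moreover, since $B_2(p_i)$ converges to $B_2(0^n)$ in the Gromov--Hausdorff sense, the lower semicontinuity of the harmonic radius along Gromov--Hausdorff converging sequences with two-sided Ricci bounds, together with the fact that the ball of radius $2$ about the basepoint of a pointed limit of the $(M_i,g_i,p_i)$ is isometric to the flat ball $B_2(0^n)\subset\R^n$, implies that the harmonic radius at $p_i$ exceeds $3/2$ for all $i$ large. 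For such $i$ one therefore gets harmonic charts $\varphi_i\colon\Omega_i\subset\R^n\to M_i$ with $\varphi_i(0^n)=p_i$, $B_{3/2}(0^n)\subset\Omega_i$, along which the pulled-back metrics $h_i:=\varphi_i^*g_i$ are uniformly elliptic, $\tfrac12\delta\le h_i\le 2\delta$ on $B_{3/2}(0^n)$, with $\|\partial h_i\|_{C^{0,\beta}(B_{3/2}(0^n))}$ uniformly bounded. In harmonic coordinates the components of $h_i$ solve the elliptic system
\[
h_i^{kl}\,\partial_k\partial_l(h_i)_{jm}=-2\,\ric_{jm}+Q_{jm}(h_i,\partial h_i),
\]
where $Q_{jm}$ is a universal quadratic expression in $\partial h_i$ with coefficients controlled by $h_i$ and $h_i^{-1}$; since $|\ric|\le\Lambda$, the right-hand side is uniformly bounded in $L^\infty(B_{3/2}(0^n))$, so interior $W^{2,q}$ estimates for every $q<\infty$ followed by Sobolev embedding upgrade the bounds on $h_i$ to uniform bounds in $C^{1,\gamma}(B_{5/4}(0^n))$ for every $\gamma<1$, in particular a uniform bound in $C^{1,\beta}(B_{5/4}(0^n))$.

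The proof would then be finished by a soft compactness argument. By Arzel\`a--Ascoli, along a subsequence $h_i\to h_\infty$ in $C^{1,\alpha}(B_1(0^n))$ for some $C^{1,\beta}$ metric $h_\infty$; the maps $\varphi_i$ are Gromov--Hausdorff approximations between $(B_1(0^n),h_i)$ and the concentric ball $B_1(p_i)\subset M_i$, and since $B_2(p_i)$ converges to $B_2(0^n)$ the metric space $(B_1(0^n),\d_{h_\infty})$ is isometric to the flat unit ball. A continuous Riemannian metric on a ball whose induced length distance is Euclidean is a flat metric with global coordinates, hence equals $\delta$ after composition with a fixed Euclidean isometry; replacing $\varphi_i$ by its composition with this isometry we obtain $\|(h_i)_{jk}-\delta_{jk}\|_{C^{1,\alpha}(B_1(0^n))}\to 0$, contradicting the choice of the $(M_i,g_i,p_i)$ for $i$ large. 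I expect the genuine obstacle to be the middle step, namely extracting from the two-sided Ricci bound and the Gromov--Hausdorff closeness to a Euclidean ball a harmonic chart of radius larger than $1$ with uniform $C^{1,\beta}$ control on the metric: this is exactly where Anderson's $\varepsilon$-regularity enters and where the noncollapsing provided by Colding's volume continuity theorem is indispensable; once this is in hand, the elliptic bootstrap in harmonic coordinates and the Arzel\`a--Ascoli/limit-is-flat arguments are routine.
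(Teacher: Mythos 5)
The paper offers no proof of this statement---it is recorded as following directly from Anderson \cite{Anderson} and Cheeger--Colding \cite{CheegerColding96}---so the task here is to judge your reconstruction on its own merits. Your overall strategy (contradiction; Colding's volume continuity to get noncollapsing; Anderson's harmonic-radius lower bound and its lower semicontinuity to push the harmonic radius past $1$; the elliptic system for $g_{ij}$ in harmonic coordinates together with $|\ric|\le\Lambda$ to get uniform $C^{1,\beta}$ bounds; Arzel\`a--Ascoli plus flatness of the limit) is exactly the standard way to unpack those two citations, and you correctly identify the crux of the matter as the middle step.

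There is one imprecision worth flagging in the final step, where you assert that a continuous flat metric on $B_1(0^n)$ whose length distance is Euclidean ``equals $\delta$ after composition with a fixed Euclidean isometry.'' That is not quite right: pulling $\delta$ back by \emph{any} self-diffeomorphism of the ball produces a flat metric whose induced length space is isometric to the flat ball, yet the metric need not equal $\delta$ and the diffeomorphism need not be a Euclidean isometry. The correct statement is that $h_\infty=\Psi^*\delta$ for some $C^{2,\beta}$ diffeomorphism $\Psi$ (the flat coordinate chart of $h_\infty$), and one should compose the charts $\varphi_i$ with $\Psi^{-1}$ rather than with a rigid motion; the harmonic-radius normalization ($h_i(0^n)=\delta$ with $h_i$ uniformly $C^0$-close to $\delta$) guarantees $\Psi$ is close to the identity, so the composed charts still cover $B_1(0^n)$, and then $(\Psi^{-1})^*h_i\to\delta$ in $C^{1,\alpha}$ as required. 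Equivalently, one can sidestep this entirely by working directly with Anderson's pointed $C^{1,\alpha}$ Cheeger--Gromov precompactness, in which the existence of embeddings $F_i$ with $F_i^*g_i\to\delta$ in $C^{1,\alpha}(B_1(0^n))$ is part of the definition of convergence once the $C^{1,\alpha}$ limit is identified (via the GH limit) with $(B_2(0^n),\delta,0)$. Likewise, the description of $\varphi_i$ as Gromov--Hausdorff approximations onto the metric ball $B_1(p_i)$ is a loose statement: $\varphi_i$ is a Riemannian isometry onto its image, which agrees with $B_1(p_i)$ only approximately and only once the $C^0$-bound on $h_i-\delta$ is in hand. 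These are fixable technical points that do not change the architecture of the argument.
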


We shall also need an $\varepsilon$-regularity result for quasi-minimizers of the perimeter whose boundary has constant mean curvature in manifolds with controlled Ricci curvature. This will imply an $\varepsilon$-regularity result for isoperimetric sets in manifolds with nonnegative Ricci curvature and Euclidean volume growth, see \cref{rem:ApplicationToIsop}. Let us first recall the definition of $(\Lambda,r)$-minimizer. 

\begin{defn}\label{def:LambdaMin}
Let $(M,g)$ be a Riemannian manifold of dimension $n$, possibly non complete. Let $\Omega\subset M$ be open, and let $E\subset \Omega$ be a Borel set. Fix $\Lambda, r_0>0$. We say that $E$ is a \emph{$(\Lambda, r_0)$-minimizer in $(\Omega,g)$} if
\begin{equation}
P(E, B_r(x)) \le P(F, B_r(x))  + \Lambda \vol(E \Delta F)\, ,
\end{equation}
whenever $E \Delta F \Subset B_r(x)\Subset \Omega$ and $r \le r_0$.
\end{defn}

\begin{remark}\label{rem:LambdaMininimiCurvMedia}
Let $E$ be a $(\Lambda, r_0)$-minimizer in $(\Omega,g)$ for some open set $\Omega$ as in \cref{def:LambdaMin}. Then $P(E,\cdot)= \haus^{n-1} \res(\partial E \cap \Omega)$, and $\partial E$ has generalized mean curvature bounded by $\Lambda$. More precisely there exists a vector field $H \in L^1_{\rm loc}(\Omega,P(E,\cdot))$ such that
\begin{equation}
\int {\rm div}_\top X \de P(E, \cdot) = - \int g(X , H) \de P(E, \cdot)\, ,
\end{equation}
for any vector field $X \in C^\infty_c(\Omega)$, and $|H| \le \Lambda$ $P(E,\cdot)$-almost everywhere, see, e.g., \cite[Lemma A.2]{PascalePozzetta} for a proof in the Euclidean space (see also the proof of \cite[Theorema 4.7.4]{AmbrosioCorsoIntroduttivo}). Indeed, the first assertion follows by standard density estimates (see, e.g., the proof of \cite[Theorem 21.11]{MaggiBook}), while the existence of the generalized mean curvature vector follows by differentiation of the $\Lambda$-minimality condition.
\end{remark}

The following $\varepsilon$-regularity theorem has essentially appeared in \cite[Theorem 6.8]{MoS21}. The version we state here can be proved arguing verbatim.

\begin{prop}\label{prop:AdaptEpsRegularity}
Let $n \in \N$ with $n\ge 2$, $v_0>0$, $\Lambda>0$, and $\varepsilon>0$ be fixed. Then there exists $\delta>0$ for which the following holds.

Let $(M^n,g,p)$ be a pointed $n$-dimensional complete Riemannian manifold with $\mathrm{Ric}\geq -\delta$ on $B_2(p)$, and $|B_2(p)|\geq v_0$. Assume $E$ is a $(\Lambda,1)$-minimizer in $B_2(p)$, and
\begin{equation}
\max\left\{\dist_{\mathrm{GH}}\left(\partial E\cap B_2(p),B_2(0^{n-1})\right),\dist_{\mathrm{GH}}\left(B_2(p),B_2(0^n)\right),\left|\frac{|E\cap B_2(p))|}{\omega_{n}2^{n}}-\frac{1}{2}\right|\right\}<\delta \, .
\end{equation}
Then, for all $q\in B_{1/2}(p)\cap \partial E$, and $r<1$ it holds
\begin{equation}\label{eqn:Regularity}
\begin{split}
\dist_{\mathrm{GH}}\left(\partial E\cap B_r(q),B_r(0^{n-1})\right)&<\varepsilon r\, ,\\ 
\left|\frac{P(E,B_r(q))}{\omega_{n-1}r^{n-1}}-1\right|&<\varepsilon \, ,\\
\left|\frac{|E\cap B_r(q)|}{\omega_{n}r^{n}}-\frac{1}{2}\right|&<\varepsilon \, .
\end{split}
\end{equation}
\end{prop}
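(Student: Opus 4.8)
The plan is to run the classical compactness-and-contradiction scheme behind De~Giorgi's $\varepsilon$-regularity theorem, in the Riemannian almost-Euclidean setting, exactly as in \cite[Theorem~6.8]{MoS21}. Suppose the statement fails for some $n,v_0,\Lambda,\varepsilon$. Then there are $\delta_i\downarrow 0$, pointed complete Riemannian manifolds $(M_i^n,g_i,p_i)$ with $\mathrm{Ric}\ge -\delta_i$ on $B_2(p_i)$ and $|B_2(p_i)|\ge v_0$, and $(\Lambda,1)$-minimizers $E_i$ in $B_2(p_i)$, such that
\[
\dist_{\mathrm{GH}}(\partial E_i\cap B_2(p_i),B_2(0^{n-1}))+\dist_{\mathrm{GH}}(B_2(p_i),B_2(0^n))+\left|\tfrac{|E_i\cap B_2(p_i)|}{\omega_n 2^n}-\tfrac12\right|\longrightarrow 0,
\]
yet there are $q_i\in B_{1/2}(p_i)\cap\partial E_i$ and $r_i\in(0,1)$ for which at least one of the three conclusions in \eqref{eqn:Regularity} fails at $(q_i,r_i)$. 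By Gromov's precompactness theorem together with the almost-rigidity and volume-convergence results of Cheeger--Colding \cite{CheegerColding96,Coldingvolume}, $(M_i,g_i,p_i)$ converges in the pointed measured Gromov--Hausdorff sense to the flat Euclidean ball $(B_2(0^n),\delta_{\mathrm{eucl}},\haus^n,0)$; in particular $|B_s(x_i)|\to\omega_n s^n$ whenever $x_i\to x$ and $s<3/2$.

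Next I would pass to the limit on the minimizers. By \cref{rem:LambdaMininimiCurvMedia} each $E_i$ satisfies $P(E_i,\cdot)=\haus^{n-1}\res(\partial E_i)$ and has generalized mean curvature bounded by $\Lambda$, hence uniform interior/exterior volume density estimates and uniform perimeter bounds at scales $\le 1$ (cf.\ \cite[Part~III]{MaggiBook}). Thus, up to a subsequence, $E_i\to E_\infty$ in $L^1_{\mathrm{loc}}(B_2(0^n))$ and $\partial E_i\to\partial E_\infty$ locally in Hausdorff distance, with $E_\infty$ of locally finite perimeter and $P(E_\infty,\cdot)\le\liminf_i P(E_i,\cdot)$. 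The hypotheses $\dist_{\mathrm{GH}}(\partial E_i\cap B_2(p_i),B_2(0^{n-1}))\to0$ and $|E_i\cap B_2(p_i)|/(\omega_n 2^n)\to1/2$, combined with the density estimates, force $\partial E_\infty$ to be (isometric to, hence totally geodesic in $\mathbb{R}^n$, hence equal to) a flat $(n-1)$-disk through the origin and $E_\infty$ to be the corresponding half-ball; after a rotation, $E_\infty=\{x_n<0\}\cap B_2(0^n)$. Along a further subsequence, $q_i\to q_\infty\in\overline{B_{1/2}(0^n)}\cap\{x_n=0\}$ and $r_i\to r_\infty\in[0,1]$.

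The key remark is that $\partial E_\infty$ is now a smooth hypersurface, flat in $B_{1/4}(q_\infty)\subset B_2(0^n)$, so for $i$ large the cylindrical excess of $\partial E_i$ in $B_{1/4}(q_i)$ lies below the threshold of the classical $\varepsilon$-regularity theorem for $(\Lambda,r)$-minimizers in manifolds with almost-Euclidean geometry (see \cite[Part~III]{MaggiBook} and \cite[Theorem~6.8]{MoS21}). Hence $\partial E_i\cap B_{1/8}(q_i)$ is the graph of a function $u_i$ over the hyperplane $\{x_n=0\}$ with uniform $C^{1,\alpha}$ bounds, and by Arzel\`a--Ascoli $u_i\to0$ in $C^{1,\beta}$ for every $\beta<1$. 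If $r_i\to0$, then $r_i<1/8$ eventually; rescaling $B_{r_i}(q_i)$ by $r_i^{-1}$ turns $\partial E_i\cap B_{r_i}(q_i)$ into the graph over $B_1(0^{n-1})$ of $x\mapsto r_i^{-1}\big(u_i(q_i'+r_ix)-u_i(q_i')\big)$, whose $C^1$ norm is controlled by $\|\nabla u_i\|_{C^0}\to0$; thus the two scale-invariant quantities tend to $1$ and $1/2$ and the Gromov--Hausdorff distance is $o(r_i)$, contradicting the failure at $(q_i,r_i)$. If instead $r_\infty>0$, I would argue directly: $|E_i\cap B_{r_i}(q_i)|/(\omega_n r_i^n)\to|E_\infty\cap B_{r_\infty}(q_\infty)|/(\omega_n r_\infty^n)=1/2$ by $L^1_{\mathrm{loc}}$ and volume convergence; $P(E_i,B_{r_i}(q_i))/(\omega_{n-1}r_i^{n-1})\to1$, the lower bound from lower semicontinuity and the upper bound from the $(\Lambda,1)$-minimality of $E_i$ tested against $E_\infty$ (metric-convergence and $\Lambda|E_i\Delta E_\infty|$ errors being $o(1)$, the spherical boundary term handled by a good-radius choice); and $\dist_{\mathrm{GH}}(\partial E_i\cap B_{r_i}(q_i),B_{r_\infty}(0^{n-1}))\to0$ by the Hausdorff convergence together with the uniform density estimates. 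In every case all three quantities converge to the flat values, a contradiction, which proves the proposition.

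The main obstacle I expect is precisely the small-scale regime $r_i\to0$: a priori nothing prevents $\partial E_i$ from being far from flat at scales $\ll1$ near $q_i$ (for $n\ge8$ one would even have to worry about nontrivial area-minimizing cones in a naive rescaling argument). This is exactly what the regularity-upgrade step rules out: once the scale-$2$ flatness is shown to propagate to $\partial E_\infty$ being a smooth disk, the classical $\varepsilon$-regularity theorem upgrades the Hausdorff convergence near $q_\infty$ to a uniform $C^{1,\alpha}$, small-norm graphical representation of $\partial E_i$, after which every small scale is automatically flat. The remaining work — making the three convergences above quantitative and uniform in the base point, and the routine good-radius choices in the perimeter comparison — is standard geometric measure theory, carried out verbatim as in \cite[Theorem~6.8]{MoS21}.
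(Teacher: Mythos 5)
Your compactness-and-contradiction scheme is indeed the framework behind the cited \cite[Theorem~6.8]{MoS21}, and the paper itself gives no detailed argument: it simply says the statement ``can be proved arguing verbatim'' from that reference. So in outline your proposal matches what the authors have in mind, and your treatment of the fixed-scale case $r_\infty>0$ (lower semicontinuity of perimeter for the lower bound, quasi-minimality tested against the limit for the upper bound, volume convergence for the density) is the standard argument and is fine.

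The gap is in the small-scale case $r_i\to 0$. You propose to invoke ``the classical $\varepsilon$-regularity theorem for $(\Lambda,r)$-minimizers in manifolds with almost-Euclidean geometry'' to upgrade the Hausdorff closeness of $\partial E_i$ to a uniform $C^{1,\alpha}$ graph with small gradient, and then to conclude flatness at all scales from the $C^1$ smallness. But every Allard/De~Giorgi–type regularity statement of that kind requires the ambient metric to be close to the Euclidean one in a \emph{$C^1$ (or $C^{1,\alpha}$)} sense — this is exactly the role of \eqref{eqn:C1Control} in \cref{C1gamma} and of the harmonic radius bound \cref{thm:HarmonicRadius}. The hypotheses of \cref{prop:AdaptEpsRegularity}, however, are only $\mathrm{Ric}\ge -\delta$ together with a volume lower bound; these give pointed Gromov--Hausdorff closeness to a Euclidean ball and $W^{1,2}$ almost-splitting maps in the sense of Cheeger--Colding, but no $C^1$ control of the metric in coordinates. (A $C^{1,\alpha}$ harmonic radius bound à la Anderson, which is \cref{thm:HarmonicRadius}, requires a \emph{two-sided} Ricci bound; the paper only has that in the later application inside \cref{thm:SecondeFormeConvergono}, not in the statement you are proving.) So the regularity upgrade you rely on is not available with the given hypotheses, and this is precisely the step that makes the proposition nontrivial.

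The correct handling of $r_i\to0$, in the spirit of \cite{MoS21}, works intrinsically with densities rather than with graphs: one first establishes the conclusion on a fixed range of scales, say $r\in[1/2,1)$, by the compactness argument you describe, and then propagates to all smaller scales either by an iteration/rescaling argument (the conclusion at scale $1/2$ re-creates the hypothesis at scale $1/2$ for the rescaled space, which still has $\mathrm{Ric}\ge-\delta$ and a volume lower bound) or by an almost-monotonicity of perimeter density for $(\Lambda,r)$-minimizers under a one-sided lower Ricci bound, which pins the density ratio close to $1$ at every subscale once it is close at a larger one. Either route replaces the $C^{1,\alpha}$ regularity you invoke with tools that survive the loss of two-sided curvature control, and this is the missing ingredient in your proposal.
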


We are now ready to show an Allard-type result for $(\Lambda,1)$-minimizers in manifolds that are $C^{1}$-close to the Euclidean space, and to deduce an $\varepsilon$-regularity result for quasi-minimizer with constant mean curvature boundary in manifolds with controlled Ricci curvature, see \cref{thm:SecondeFormeConvergono}.

\begin{thm}[$C^{1,\gamma}$-regularity of almost flat $\Lambda$-minimizers on almost Euclidean balls]\label{C1gamma}
Fix $n \geq 2,\gamma\in(0,1), \Lambda>0$. Then there exist $\overline{\eps},\overline{\sigma} \in (0,1)$, and $\overline{C}>0$ such that the following holds.

Let $B_2(0^n)$ be endowed with a smooth Riemannian metric $g$ such that
\begin{equation}\label{eqn:C1Control}
\| g_{ij}- \delta_{ij}\|_{C^1(B_2(0^n))} \le \overline{\eps}\, .
\end{equation}
Let $E \subset B_2(0^n)$ be a $(\Lambda,1)$-minimizer in $(B_2(0^n), g)$. Suppose that $0^n \in \partial E$ and that
\begin{equation}\label{eqn:DensityControlPerimeter}
\left| \frac{P(E, B_r^{g}(0^n))}{\omega_{n-1}r^{n-1}} -1 \right| \le \overline{\eps}\, ,
\end{equation}
for any $r\in(0,1)$.
Then, up to rotation, there exists a Lipschitz function $u: (B_{\overline{\sigma}}(0^{n-1}), g_{\rm eu}) \to \R$ such that
\begin{equation}\label{eqn:HolderControls}
    |u|/\overline{\sigma} + |\nabla u | \le \overline{C}\, ,
\qquad
\nabla u(0)=0\, ,
\qquad
|\nabla u(x) - \nabla u(y)| \le  \overline{C} |x-y|^\gamma\, ,
\end{equation}
\begin{equation}\label{eqn:GraphConditions}
    \partial E \cap  B_{\overline{\sigma}}(0^n)  = \{ (x,u(x) ) \st x \in B_{\overline{\sigma}}(0^{n-1})\} \cap  B_{\overline{\sigma}}(0^n)\,  .
\end{equation}
\end{thm}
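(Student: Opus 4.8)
The plan is to deduce this from the $\varepsilon$-regularity Proposition \ref{prop:AdaptEpsRegularity} together with the classical Allard regularity theorem for varifolds with bounded generalized mean curvature, in the form that gives $C^{1,\gamma}$ graph decomposition. First I would observe that, by Remark \ref{rem:LambdaMininimiCurvMedia}, the fact that $E$ is a $(\Lambda,1)$-minimizer in $(B_2(0^n),g)$ implies $P(E,\cdot)=\haus^{n-1}\res(\partial E\cap B_2(0^n))$ and that the reduced boundary $\partial E$ carries a generalized mean curvature vector $H$ with $|H|\le\Lambda$ $P(E,\cdot)$-a.e. Hence the integer rectifiable varifold $V:=\mathbf{v}(\partial E,1)$ associated with $\partial E$ has first variation, computed with respect to the metric $g$, bounded by $\Lambda$ times the mass (the extra term coming from the Christoffel symbols of $g$ is controlled by $\overline{\eps}$ via \eqref{eqn:C1Control}, so it only worsens the mean curvature bound by a universal constant $C(n)\overline{\eps}$, which can be absorbed into $\Lambda$). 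Since $g$ is $C^1$-close to $g_{\rm eu}$ on $B_2(0^n)$ by \eqref{eqn:C1Control}, the same varifold, regarded with respect to the Euclidean metric, still has first variation bounded by $2\Lambda$ (say) and bounded mass on $B_1(0^n)$, with a density ratio at $0^n$ close to $1$ thanks to \eqref{eqn:DensityControlPerimeter} and the bi-Lipschitz comparison between $g$-balls and Euclidean balls (again a consequence of \eqref{eqn:C1Control}, up to shrinking $\overline{\eps}$).

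Next I would apply Allard's theorem: there exist dimensional constants such that if the density ratio $\theta_V(0^n,1)$ is within a small threshold of $1$ (which we arrange by choosing $\overline{\eps}$ small in \eqref{eqn:DensityControlPerimeter}) and the generalized mean curvature is $L^p$-bounded for $p>n-1$ (here it is even $L^\infty$-bounded by $2\Lambda$), then on a smaller ball $B_{\overline{\sigma}}(0^n)$ the support of $V$ is, after a rotation, the graph of a $C^{1,\gamma}$ function $u$ over $B_{\overline{\sigma}}(0^{n-1})$, with the quantitative bounds $\|u\|_{C^{1,\gamma}}\le\overline C$ and $|\nabla u(0)|$ as small as we like; in particular we may rotate so that the approximate tangent plane at $0^n$ is $\{x_n=0\}$, giving $\nabla u(0)=0$. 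The radius $\overline{\sigma}$ and the constants $\overline C,\gamma$ depend only on $n,\gamma,\Lambda$. The hypothesis that the density ratio is near $1$ is exactly what \eqref{eqn:DensityControlPerimeter} provides; strictly speaking Allard also requires smallness of the excess (flatness), but this follows from the monotonicity formula together with the density pinching, a standard argument. Translating back from the Euclidean metric to $g$ only changes $\partial E\cap B_{\overline{\sigma}}^g(0^n)$ versus $\partial E\cap B_{\overline{\sigma}}(0^n)$ by a comparable-radius adjustment, yielding \eqref{eqn:GraphConditions}, and all the $C^{1,\gamma}$ bounds transfer with a constant depending only on $\overline{\eps}$, hence on $n,\gamma,\Lambda$.

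The main obstacle I anticipate is bookkeeping the passage between the metric $g$ and the Euclidean metric cleanly: one must check that (i) the $(\Lambda,1)$-minimality in $(B_2(0^n),g)$, which is a purely variational statement, indeed yields the varifold first-variation bound needed to invoke Allard (this is the content of Remark \ref{rem:LambdaMininimiCurvMedia} combined with the first variation of area in a Riemannian manifold), and (ii) the normalizations in \eqref{eqn:DensityControlPerimeter} are stated with $g$-balls while Allard's hypotheses are Euclidean — the reconciliation uses only \eqref{eqn:C1Control} and costs a factor $1+C(n)\overline{\eps}$ everywhere. None of this is deep; it is the standard route by which $\varepsilon$-regularity plus bounded mean curvature upgrades to $C^{1,\gamma}$ graphicality, and it is precisely the statement invoked as \eqref{eqn:C2alphabound} and used in the proof of \cref{prop:secondffstable}. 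I would also remark that once $u\in C^{1,\gamma}$ and $\partial E$ has constant mean curvature (in the applications), elliptic Schauder estimates bootstrap $u$ to $C^{2,\alpha}$, but that improvement is not part of the present statement.
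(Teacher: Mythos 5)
Your plan to convert everything to the Euclidean metric and invoke the classical Allard theorem has a gap at the crucial step where you claim that ``the same varifold, regarded with respect to the Euclidean metric, still has first variation bounded by $2\Lambda$''. The $(\Lambda,1)$-minimality in $(B_2(0^n),g)$ gives, via \cref{rem:LambdaMininimiCurvMedia}, that the first variation \emph{with respect to the $g$-area integrand} is bounded by $\Lambda$ times mass, i.e., $\left|\int_{\partial^*E}{\rm div}^g_\top X\,\d P_g\right|\le\Lambda\int|X|_g\,\d P_g$. Converting ${\rm div}^g_\top X$ into ${\rm div}^{\rm eu}_\top X$ produces an error of size $C(n)\overline\eps\left(|X|+|\nabla X|\right)$; the $|\nabla X|$ contribution is not of the form $H_0\int|X|\,\d P_{\rm eu}$ and cannot be absorbed into $\Lambda$. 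Moreover, the density $a(x,\nu_{\rm eu}(x))=\bigl(\scal{{\rm Cof}_x(\nu_{\rm eu}),\nu_{\rm eu}}\bigr)^{1/2}$ relating $P_g$ to $P_{\rm eu}$ depends on the a priori only measurable Euclidean normal $\nu_{\rm eu}$, so you cannot integrate by parts to discharge the $|\nabla X|$ term either. For an integer rectifiable varifold whose support is only known to be $C^{1,\alpha}$ (the output, not the input, of Allard), the Euclidean first variation is a distribution of order one and need not be a measure at all; so the hypothesis of the classical Euclidean Allard theorem (first variation absolutely continuous with bounded density) is precisely what is in question.

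The paper sidesteps this by not converting to a Euclidean varifold at all: it writes $P_g(G,\cdot)=\Psi(x,n_G)\,P_{\rm eu}(G,\cdot)$ with $\Psi(x,u)=\bigl(\scal{{\rm Cof}_x(u),u}\bigr)^{1/2}$ and recognizes this as a \emph{parametric integrand} on Euclidean $B_2(0^n)$ that is $C^1$-close to the area integrand. The $(\Lambda,1)$-minimality directly yields a first-variation bound with respect to $\Psi$, and Allard's regularity theorem for parametric integrands from \cite{AllardProceedings86} is exactly the tool designed for this situation. The classical Euclidean Allard theorem \cite[Theorem 5.2 p.~121]{SimonBook} does appear in the paper's proof, but only inside a compactness/contradiction argument: one lets $\overline\eps_i\to0$, passes to a Euclidean $(\Lambda,1)$-minimizing limit (where Euclidean Allard applies), and transfers the resulting height/density bounds back to the approximating sequence by Kuratowski convergence, thereby verifying the hypotheses of the parametric-integrand theorem. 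If you want to salvage your approach, you would need either to invoke the parametric-integrand theorem as the paper does, or to carry out a blow-up argument (fixing a point, passing to normal coordinates, and rescaling so the metric becomes asymptotically Euclidean at small scales) — but that is a different and more involved argument than what you sketched.
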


\begin{proof}
We first observe that there exist $\overline{\eps}_1\in(0,1)$ and $r_\Lambda=r_\Lambda(\Lambda, n) \in (0,1)$ such that if $g$ is a metric on $B_2(0^n)$ with $\| g_{ij}- \delta_{ij}\|_{C^1(B_2(0^n))} \le \overline{\eps}_1$, then any $(\Lambda,1)$-minimizer $G$ in $(B_2(0^n), g)$ satisfies density estimates of the form
\begin{equation}\label{eq:DensityEstimatesLambdaMin}
v_n \le \frac{\vol(G \cap B_\rho^g(x) )}{\omega_n \rho^n} \le 1- v_n\, ,
\qquad
c_n^{-1} \le \frac{P(G, B^g_\rho(x))}{\omega_{n-1}\rho^{n-1}} \le c_n\, ,
\end{equation}
for some $v_n \in (0,1)$ and $c_n> 1$ independent of $G, g$, for any $x \in \partial G \cap B_2(0^n)$ and any $\rho \le r_\Lambda$ with $B_\rho(x) \Subset B_2(0^n)$ (see, e.g., the proof of \cite[Theorem 21.11]{MaggiBook}).
\smallskip

The proof of the claim will eventually follow from an application of Allard's regularity theorem for varifolds having bounded first variation with respect to a parametric integrand, see \cite[The Regularity Theorem p. 27]{AllardProceedings86}. We sketch here the main steps leading to the application of the previous result.
\smallskip

It readily follows from the area formula that the perimeter measure of a set $G\subset B_2(0^n)$ with respect to a metric $g$ on $B_2(0^n)$ can be written as $P(G, \cdot) = \big( \scal{{\rm Cof}_x(n_G), n_G} \big)^{\frac12} P_{\rm eu}(G, \cdot)$, where $\scal{\cdot,\cdot}$ denotes Euclidean scalar product, $P_{\rm eu}(G, \cdot)= \haus^{n-1}_{\rm eu} \res \partial^*G$ is the Euclidean perimeter measure, $n_g$ is the unit Euclidean inner normal to $G$, and ${\rm Cof}_x$ is the cofactor matrix of the matrix $(g_{ij})$ at the point $x$.
Hence, if $\overline{\eps}_1\in(0,1)$ is small enough, the perimeter functional with respect to a metric $g$ with $\| g_{ij}- \delta_{ij}\|_{C^1(B_2(0^n))} \le \overline{\eps}_1$ is an admissible functional corresponding to the parametric integrand $\Psi(x,u) = \big( \scal{{\rm Cof}_x(u), u} \big)^{1/2}$, in the notation of \cite[3 p. 18]{AllardProceedings86}. Here by admissible we mean that all the constants $M_{i,j}, N_{i,j}$ appearing in \cite{AllardProceedings86} corresponding to $\Psi$ are bounded in terms of the dimension $n$ and of $\overline{\eps}_1$ only.

Take $A=0$ and $\eps=\lambda=\tfrac12$ in \cite[2.2]{AllardProceedings86} and recall \eqref{eq:DensityEstimatesLambdaMin}. Note that \cref{rem:LambdaMininimiCurvMedia} implies that the first variation with respect to the integrand $\Psi$ is bounded by $\Lambda$ in the sense of \cite[3.6(4)]{AllardProceedings86}. Taking into account \cite[Remark p. 28]{AllardProceedings86}, \cite[The Regularity Theorem p. 27]{AllardProceedings86} yields a parameter $\delta_8\in(0,1)$ (in the notation of \cite{AllardProceedings86}) and constants $\overline{C}=\overline{C}(n,\gamma), c(\Lambda)$ such that if $G \subset B_2(0^n)$ is a $(\Lambda,1)$-minimizer for a metric $g$ with $\| g_{ij}- \delta_{ij}\|_{C^1(B_2(0^n))} \le \overline{\eps}_1$ and
\begin{equation}\label{eq:IpotesiAllard}
    \frac12 <\frac{P(G,B_{\rho/2}(0^n))}{\omega_{n-1}(\rho/2)^{n-1}} < \frac32,
    \qquad
    |\mathbf{q}x| < \delta_8 \rho \quad \forall\, x \in \partial G \cap B_\rho(0^n)\, ,
\end{equation}
for some $\rho< c(\Lambda)\le 1$, then $\partial G \cap  B_{\overline{\sigma}_2\rho}(0^n)$ is parametrized by the graph of a $C^{1,\gamma}$ function $u$ with bounds as in the statement, for suitable $\overline{\sigma}_2=\overline{\sigma}_2(n, \gamma)\in(0,\tfrac12)$. Above $\mathbf{q}:\R^n\to\R$ is the projection $\mathbf{q}(x_1,\ldots, x_n):= x_n$. We are therefore left to verify \eqref{eq:IpotesiAllard} for a set $E$ satisfying, for suitable choices of $\overline{\eps}, \overline{\sigma}$.
\smallskip

Let $p>1$ be such that $\gamma=1-(n-1)/p$ and let $\delta_0, \overline{\sigma}_1 \in (0,1)$ be the two parameters, depending on $n, p$ only, in the assumptions in the classical Euclidean Allard regularity theorem \cite[Theorem 5.2 p. 121]{SimonBook}. Here $\overline{\sigma}_1$ denotes the constant called $\gamma$ in \cite[Theorem 5.2 p. 121]{SimonBook}. Define $\delta:= \tfrac12\min\{\delta_0, (\delta_8 \overline{\sigma}_1/(5C'))^{2n+2}, \Lambda c_n^{\frac1p}\omega_{n-1}^{\frac1p}\}$ where $C'=C'(n,\gamma)$ is the constant called $C$ in \cite[Theorem 5.2 p. 121]{SimonBook}, and $r_\delta:= \delta/( \Lambda c_n^{\frac1p}\omega_{n-1}^{\frac1p} ) < 1$. We finally define
\begin{equation*}
    \overline{\sigma}:= \frac{1}{6} \min\left\{\overline{\sigma}_1\overline{\sigma}_2 r_\delta, c(\Lambda) \overline{\sigma}_2 \right\}.
\end{equation*}
We want to show that there exist $\overline{\eps}\le \overline{\eps}_1$ such that if $g, E$ are as in the assumptions, then $E$ satisfies \eqref{eq:IpotesiAllard} with $\rho=\overline{\sigma}/\overline{\sigma}_2$, leading to the proof.\\
Suppose by contradiction that there exist $\overline{\eps}_i \le \overline{\eps}_1$ with $\overline{\eps}_i\searrow0$, and a sequence of metrics $g_i$ and a sequence of $(\Lambda,1)$-minimizers $E_i$ as in the assumptions, but \eqref{eq:IpotesiAllard} with $G=E_i$ and with $\rho=\overline{\sigma}/\overline{\sigma}_2$ does not hold. By \eqref{eq:DensityEstimatesLambdaMin}, up to subsequence $E_i$ converges to a nonempty Euclidean $(\Lambda, 1)$-minimizer $F$. The fact that $F$ is a Euclidean $(\Lambda, 1)$-minimizer follows by easily adapting the proof of \cite[Theorem 21.14]{MaggiBook}. Moreover, by convergence of the perimeter measures as in \cite[Theorem 21.14]{MaggiBook} and applying \cref{rem:LambdaMininimiCurvMedia} on the set $F$, we get
\begin{equation}\label{eq:IpotesiAllClassicoSuLimite}
    \frac{P_{\rm eu}(F, B_r(0^n))}{\omega_{n-1}r^{n-1}} = 1\, ,
\qquad
\left( \int_{B_r(0^n)} |H_{\partial F}|^p \de P_{\rm eu}(F, \cdot) \right)^{\frac1p} r^{1- \frac{n-1}{p}} \overset{\eqref{eq:DensityEstimatesLambdaMin}}{\le} \Lambda c_n^{\frac1p}\omega_{n-1}^{\frac1p} r \le \delta\, ,
\end{equation}
for any $r\le r_\delta$. Hence the classical Euclidean Allard regularity theorem \cite[Theorem 5.2 p. 121]{SimonBook} can be applied to $F$, implying that, up to rotation, $\partial F \cap B_{\overline{\sigma}_1 r}$ is parametrized as the graph of a $C^{1,\gamma}$ function $f:B_{\overline{\sigma}_1 r}(0^{n-1})\to \R$ with $|f|/r + |\nabla f| \le C' \delta^{1/(2n+2)}$ and $[\nabla f]_{0,\gamma} \le  C' \delta^{1/(2n+2)} r^\gamma$, for any $r\le r_\delta$. Since $3 \overline{\sigma}/(\overline{\sigma}_1 \overline{\sigma}_2) < r_\delta$, we can choose $r=3 \overline{\sigma}/(\overline{\sigma}_1 \overline{\sigma}_2)$, so that there holds
\begin{equation}\label{eq:HeightSulLimite}
    |\mathbf{q}x| \le C' \delta^{1/(2n+2)} r < \frac35 \delta_8 \frac{\overline{\sigma}}{\overline{\sigma}_2}  \qquad \forall\, x \in \partial F \cap B_{3 \overline{\sigma}/\overline{\sigma}_2}(0^n)\, .
\end{equation}
Since $\partial E_i\to \partial F$ in Kuratowski sense thanks to the uniform density estimates (see, e.g., the proof of \cite[Theorem 21.14 (i), (ii)]{MaggiBook}), the first equalities in \eqref{eq:IpotesiAllClassicoSuLimite} and \eqref{eq:HeightSulLimite} imply that \eqref{eq:IpotesiAllard} hold on $E_i$ for $i$ large enough with $\rho= \overline{\sigma}/\overline{\sigma}_2$, thus yielding a contradiction.
\end{proof}

\begin{thm}\label{thm:SecondeFormeConvergono}
Let $n \in \N$ with $n\ge 2$, $C,\Lambda,v_0>0$, $H\in\mathbb R$, and $\alpha\in (0,1)$. Then there exist $\eps<1, \sigma \in (0,1)$, $B>0$ such that the following holds.

Let $(M^n,g,p)$ be a pointed $n$-dimensional complete Riemannian manifold with $-\varepsilon \leq \ric\leq C$ on $B_2(p)$, and $|B_2(p)|\geq v_0$. Suppose $E\subset M$ is a set such that
\[
\text{$E$ is a $(\Lambda,1)$-minimizer in $B_2(p)$, $\partial E$ has a.\!\! e.\!\! generalized mean curvature $H_{\partial E}\equiv H$}\, ,
\]
and
\begin{equation}\label{eqn:Control1AprioriSff}
\max\left\{\d_{\rm GH}(B_2(p), B_2(0^n)),\dist_{\rm GH}\left( \partial E \cap B_2(p) , B_2(0^{n-1}) \right), \left| \frac{|E \cap B_2(p)|}{\omega_n 2^n} - \frac12\right| \right\}
 \le \eps\, .
\end{equation}
Then, denoting with $\partial_r E$ the regular part of $\partial E$, it holds $\partial E \cap B_\sigma(p) \subset \partial_r E$, and
\begin{equation}\label{eqn:sffBound}
\left\| |\sff| \right\|_{C^{0,\alpha}(\partial E \cap B_\sigma(p), \dist|_{\partial E})} 
\eqdef \sup_{x \in \partial E \cap B_\sigma(p)} |\sff|(x) + \sup_{x, y \in \partial E \cap B_\sigma(p)} \frac{\left| |\sff|(x) -  |\sff|(y) \right|}{\dist(x,y)^\alpha}
\le B\, ,
\end{equation}
where $\sff$ denotes the second fundamental form of $\partial E$. In particular, $\partial E \cap B_\sigma(p)$ is smooth.

Moreover, if also $|{\rm Sect}| \le K$ on $B_2(p)$, then the (intrinsic) sectional curvature of $\partial E \cap B_\sigma(p)$ is bounded from below by a constant $B'=B'(B,K)\in\R$.
\end{thm}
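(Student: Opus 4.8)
The strategy is to argue by contradiction, passing to a blow-up and invoking the $C^{1,\gamma}$-regularity from \cref{C1gamma} to obtain uniform $C^{1,\gamma}$ graph parametrizations, and then to bootstrap using the constant mean curvature equation. First I would reduce to the situation where $\partial E$ is a $C^{1,\gamma}$ graph near $p$: by \cref{prop:AdaptEpsRegularity}, if $\eps$ is small enough then for every $q \in B_{1/2}(p) \cap \partial E$ and every $r<1$ the perimeter density ratio $P(E,B_r(q))/(\omega_{n-1}r^{n-1})$ is as close to $1$ as we like; combined with \cref{thm:HarmonicRadius}, which provides a harmonic chart in which $\|g_{ij}-\delta_{ij}\|_{C^{1,\alpha}(B_1(0^n))}$ is small, this lets us apply \cref{C1gamma} at each such $q$. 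Thus, up to shrinking to a ball $B_{\sigma}(p)$, $\partial E \cap B_\sigma(p)$ is (the image under the chart of) a graph $\{(x,u(x))\}$ of a $C^{1,\gamma}$ function $u$ with $\|u\|_{C^{1,\gamma}} \le \overline C$, for a $\gamma \in (0,1)$ and constant $\overline C$ depending only on $n$. In particular $\partial E \cap B_\sigma(p) \subset \partial_r E$.

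Next I would upgrade the regularity using the equation. In the harmonic chart, the condition that $\partial E$ has (a.e.) constant generalized mean curvature $H$ translates into a second-order quasilinear elliptic PDE for $u$ of the form $\operatorname{div}\!\big(a(x,u,\nabla u)\big) = b(x,u,\nabla u)$, whose coefficients are smooth functions of the metric components $g_{ij}$ and of $(u,\nabla u)$; since $u \in C^{1,\gamma}$ and $g_{ij} \in C^{1,\alpha}$, Schauder theory for quasilinear equations (applied iteratively) gives $u \in C^{2,\beta}_{\rm loc}$ with an estimate $\|u\|_{C^{2,\beta}(B_{\sigma/2}(0^{n-1}))} \le B$ depending only on $n, C, \Lambda, v_0, H, \alpha$ (and the already-fixed constants), where $\beta \le \min\{\alpha,\gamma\}$. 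The second fundamental form $\sff_{\partial E}$ and its norm $|\sff|$ are expressed through $u$, $\nabla u$, $\nabla^2 u$ and the Christoffel symbols of $g$ (hence through $g_{ij}$ and $\partial g_{ij}$), so the $C^{2,\beta}$ bound on $u$ and the $C^{1,\alpha}$ bound on $g$ yield the desired $C^{0,\alpha}$ bound \eqref{eqn:sffBound} for $|\sff|$ on $\partial E \cap B_\sigma(p)$ with respect to the restricted distance (here one uses that, by the $C^1$ graph bound, the restricted and intrinsic distances on $\partial E$ are comparable, so it does not matter which one appears in the Hölder seminorm). Once $|\sff| \in C^{0,\alpha}$ one can re-enter the CMC equation and bootstrap all the way, so $\partial E \cap B_\sigma(p)$ is in fact smooth. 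To make the contradiction argument genuinely yield uniform constants: if the claim failed there would be a sequence $(M_i,g_i,p_i)$, $E_i$ satisfying the hypotheses with $\eps_i \to 0$ but with $\||\sff_{\partial E_i}|\|_{C^{0,\alpha}} \to \infty$ on $B_{\sigma}(p_i)$ for every fixed $\sigma$; the uniform $C^{2,\beta}$ estimate just described contradicts this, so the constants $\eps, \sigma, B$ exist.

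Finally, the sectional curvature lower bound under the additional hypothesis $|\operatorname{Sect}|\le K$ is immediate from the Gauss equation: for tangent planes to $\partial E \cap B_\sigma(p)$ one has $\operatorname{Sect}_{\partial E} = \operatorname{Sect}_M + (\text{quadratic expression in } \sff)$, and the right-hand side is bounded below by $-K - (\text{const})\cdot\||\sff|\|_\infty^2 \ge -K - (\text{const})\,B^2 =: B'(B,K)$. I expect the main obstacle to be the careful verification of the Schauder bootstrap in the geometric (harmonic-chart) setting—ensuring that all the constants entering the quasilinear Schauder estimates genuinely depend only on the allowed parameters, that the domains do not degenerate under the chart, and that the passage between intrinsic and extrinsic distances on $\partial E$ is uniformly controlled; the rest is either a direct application of \cref{C1gamma} and \cref{thm:HarmonicRadius} or a routine compactness/contradiction packaging.
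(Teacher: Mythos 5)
Your proposal is correct and follows essentially the same route as the paper: harmonic-chart flatness from \cref{thm:HarmonicRadius}, perimeter-density control from \cref{prop:AdaptEpsRegularity}, $C^{1,\gamma}$ graph parametrization via \cref{C1gamma}, Schauder estimates for the constant mean curvature equation in the chart to bound $u$ in $C^{2,\alpha}$, and the Gauss equation for the sectional curvature bound. The only cosmetic difference is that the paper tracks the constants directly rather than packaging them through a closing compactness/contradiction step, but the mathematical content is the same.
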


\begin{proof}
   Let $\bar\varepsilon,\bar\sigma,\bar C>0$ be the constants depending on $\Lambda,\gamma:=\alpha,n$ for which \cref{C1gamma} holds. Let $\varepsilon_0:=\varepsilon_0(n,\bar\varepsilon,C,\alpha)$ be the constant for which \cref{thm:HarmonicRadius} holds.
    Thus, if $\varepsilon<\varepsilon_0$ in the statement of \cref{thm:SecondeFormeConvergono}, there exists a chart centered at $p$ such that
    \begin{equation}
    \|g_{ij}-\delta_{ij}\|_{C^{1,\alpha}(B_1(0^n))}\leq \bar\varepsilon\, .
    \end{equation}
    Finally let $\delta:=\delta(\bar\varepsilon,n,v_0,\Lambda)$ be the constant appearing in \cref{prop:AdaptEpsRegularity}. We claim that in the statement of \cref{thm:SecondeFormeConvergono} we can take $\varepsilon:=\min\{\delta,\varepsilon_0\}$.

    Indeed, by how we chose the constants, and subsequently applying \cref{thm:HarmonicRadius}, 
    \cref{prop:AdaptEpsRegularity}, and \cref{C1gamma} we get from \eqref{eqn:GraphConditions} that $\partial E\cap B_{\bar\sigma/2}(p)$ is a graph of a $C^{1,\alpha}$-function $u$ defined on some ball $B_{r}(0^{n-1})\subset B_{r}(0^n)$, with $\bar\sigma/4<r<\bar\sigma$, contained in the domain of the chart given by \cref{thm:HarmonicRadius}. Thus $\partial E\cap B_{\sigma}(p)\subset \partial_r E$ with $\sigma:=\bar\sigma/2$, where $\partial_rE$ denotes the regular part of $\partial E$.
\medskip
    
    In order to obtain the H\"older bound on the second fundamental form we exploit a standard PDE argument based on the fact that $E$ has smooth boundary and it has constant mean curvature. Let $u:B_r(0^{n-1})\to \mathbb R$, with $\bar\sigma/4 < r < \bar\sigma$, be the function defined above such that the graph $(x,u(x))$ in the chart given by \cref{thm:HarmonicRadius} is (a subset of) $\partial E$. Let us define
    \begin{equation}
    h_{ij}:=g_{ij}+\partial_i u \cdot g_{jn}+\partial_j u \cdot g_{in} + \partial_i u\cdot \partial_j u \cdot g_{nn}\, , \qquad \forall i,j=1,\ldots,n-1\, .
    \end{equation}
    Moreover, let $\{e_1,\ldots,e_{n}\}$ be the canonical basis in $\mathbb R^{n}$, and let $v_i:=(e_i,\partial_i u)\in \mathbb R^n$, for every $i=1,\ldots,n-1$. Denote $\{\hat v_i\}_{i=1,\ldots,n-1}$ the orthonormal vectors obtained by applying Gram-Schmidt reduction to $\{v_i\}_{i=1,\ldots,n-1}$. Denote $e_n^\perp:=e_n-\sum_{i=1}^{n-1} g(e_n,\hat v_i)\hat v_i$.
    
    A routine computation shows that the mean curvature of the graph of $u$ in the chart is 
    \begin{equation}\label{eqn:Elliptic}
    H=g(e_n^\perp,e_n^\perp)^{1/2}\cdot h^{ij}\cdot \partial_{ij}u + Q(g,\partial g, \partial u)\, ,
    \end{equation}
    where $Q(g,\partial g,\partial u)$ is a rational expression involving $g,\partial g,\partial u$.

    Hence, if $\|g_{ij}-\delta_{ij}\|_{C^0}+\|u\|_{C^1}\leq \varepsilon(n)$, where $\varepsilon(n)$ is a universal small constant, \eqref{eqn:Elliptic} is an elliptic equation. By taking $\bar\varepsilon\leq \varepsilon(n)/2$, which we can always do without loss of generality, and by taking into account \eqref{eqn:C1Control}, and the second inequality in \eqref{eqn:HolderControls}, we have the following. There exists $N:=N(n,\alpha,\Lambda)>1$ large enough such that 
    \begin{equation}
    \|g_{ij}-\delta_{ij}\|_{C^0(B_{r/N}(0^n))}+\|u\|_{C^1(B_{r/N}(0^{n-1}))} \leq \varepsilon(n)\, .
    \end{equation}
    
    Hence \eqref{eqn:Elliptic} is an elliptic equation on $B_{r/N}(0^{n-1})$. We can thus apply Schauder estimates to get
    \begin{equation}\label{eqn:C2alphabound}
    \|u\|_{C^{2,\alpha}(B_{r/(2N)}(0^{n-1}))}\leq C\left(r,N,\alpha,|H|,\|g\|_{C^{1,\alpha}(B_{r/N}(0^{n}))},\|u\|_{C^{1,\alpha}(B_{r/N}(0^{n-1}))}\right)\, .
    \end{equation}
    Finally, writing $\sff$ in the chart $\varphi$, we get \eqref{eqn:sffBound} up to shrinking $\sigma:=\bar\sigma/(16N)$, and where $B$ only depends on $\bar\varepsilon$, $r$, $N$, $\alpha$, $\bar C$, and an upper bound on $|H|$, and thus ultimately only on $\alpha,n,\Lambda,H$.
\medskip

    The last assertion about the sectional curvature with respect to the induced metric being uniformly bounded readily comes from the Gauss equations, taking into account the uniform bounds for the second fundamental form and the bounds for the sectional curvature of the ambient Riemannian manifold.
\end{proof}

\begin{remark}[Applying \cref{thm:SecondeFormeConvergono} to isoperimetric sets]\label{rem:ApplicationToIsop}
    \cref{thm:SecondeFormeConvergono} has, as a direct consequence, an $\varepsilon$-regularity result for isoperimetric sets with large volumes in complete manifolds with $\ric\ge 0$, $\mathrm{AVR}>0$, and quadratic curvature decay \eqref{eq:QCD}, which is relevant to our aims.

    Let $(M^n,g)$ be a smooth Riemannian manifold with $\mathrm{Ric}\geq 0$, and with $\mathrm{AVR}\geq v_0>0$. 
Choosing $V_1=\frac 12$, $V_2=2$, $V_3=3$, $K=0$ in \cite[Corollary 4.15]{AntonelliPasqualettoPozzettaSemola1}, and by a scaling argument, we get that there is $\mathcal{C}:=\mathcal{C}(v_0,n)\geq 1$, such that 
\begin{equation}\label{eqn:EverySetLambdaMin}
\text{Any isoperimetric set $E\subset M$ with $|E|\geq 1$ is a $(\mathcal{C}|E|^{-1/n},\mathcal{C}^{-1}|E|^{1/n})$-minimizer}\, ,
\end{equation}
and moreover
\begin{equation}
\mathrm{Per}(E,B_r(p))\leq (1+\mathcal{C}|E|^{-1/n}r)\mathrm{Per}(F,B_r(p))\, ,
\end{equation}
for every $p\in M$, $r<\mathcal{C}^{-1}|E|^{1/n}$, and $F$ such that $E\Delta F\subset\subset B_r(p)$.
    
Now, let $V_1\geq \mathcal{C}^n$ be fixed, and define  $\Lambda:=\mathcal{C}V_1^{-1/n}$. By \eqref{eqn:EverySetLambdaMin}, every isoperimetric set $E$ with $|E|\geq V_1$ in $M$ is a $(\mathcal{C}|E|^{-1/n},\mathcal{C}^{-1}|E|^{1/n})$-minimizer, and thus in particular a $(\Lambda,1)$-minimizer. Moreover, recall that the boundary of a smooth isoperimetric set has constant mean curvature, and if $|E|\geq V_1$ in $M$, then $|H_{\partial E}|\leq C(n,V_1,v_0)$, see \cite[Corollary 3.5]{AntonelliPasqualettoPozzettaSemola2}.
    
Taking into account also \cref{lemma:hausconv}, and the regularity of cross-sections of manifolds with $\ric\ge 0$, $\mathrm{AVR}>0$ and quadratic curvature decay \eqref{eq:QCD}, we infer that\cref{thm:SecondeFormeConvergono} can be applied in this setting, as soon as $E$ is an isoperimetric set with $|E|\geq V_1\geq \mathcal{C}(v_0,n)^n$. In this case the constants $\varepsilon,\sigma,B$ will obviously depend on $n,C,V_1,v_0$. 
\end{remark}

\section{Uniform bounds for the reach}

The goal of this appendix is to prove that the reach of a smooth hypersurface $\Sigma^{n-1}$ embedded in a smooth Riemannian manifold $M^n$ can be bounded from below in terms of the second fundamental form of the hypersurface and the norm of the sectional curvature of the ambient manifold, under some minor additional technical assumptions. The statement is probably known to experts, and it follows for instance from a slight adaptation of the arguments in \cite{HeintzeKarcher}. We provide a detailed proof as we are not able to locate a precise reference in the literature. The argument is due to Petrunin, see \cite{PetruninMO}.

\begin{prop}\label{prop:reachbound}
Let $E$ be a compact subset of an $n$-dimensional smooth Riemannian manifold $M^n$, and assume that $\partial E:=\Sigma^{n-1}$ is a codimension one 
hypersurface embedded in $M^n$. Let $\nu_\Sigma$ be the unit inner normal. Assume 
there is $\varepsilon_0>0$ such that 
\begin{equation}\label{eqn:2SidedWell}
    \exp_p(t\nu_\Sigma(p)) \in 
    \begin{cases}
        E \quad &\text{for all $t\in (0,\varepsilon_0)$}\, ,\\
        M\setminus E \quad &\text{for all $t\in (-\varepsilon_0,0)$}\, .
    \end{cases}
\end{equation}
Let $C>0$ be such that 
\begin{equation}\label{eqn:HypothesesOnSffSec}
\sup_{x\in \Sigma}|\rm{II}(x)|\le C\, ,\quad \sup_{x\in B_1(\Sigma)}|\mathrm{Riem}(x)|\le C\, ,
\end{equation}
where we denote $B_r(\Sigma):=\{x\in M\,: \d(x,\Sigma)<r\}$ for each $r>0$. Then there exists a constant $r_0=r_0(C,n,\varepsilon_0)>0$ such that
\begin{equation}\label{eqn:SoughtDiffeomorphism}
    F:(-r_0,r_0)\times \Sigma \to M\, , \qquad F(t,p):=\exp_p(t\nu_\Sigma(p))
\end{equation}
is a diffeomorphism with its image.
\end{prop}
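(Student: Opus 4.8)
The plan is to show that the normal exponential map $F$ is an injective immersion on a uniform tubular neighborhood of $\Sigma$, and then to promote this to an open embedding by a soft topological argument. All the estimates are local, so throughout one works inside $B_1(\Sigma)$, where by \eqref{eqn:HypothesesOnSffSec} the ambient curvature is controlled and where — in the situation of interest — normal geodesics of length $<1$ are defined (this is automatic when $M$ is complete, as in the application; it is the ``minor technical assumption'' alluded to).

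First I would establish a \emph{focal radius bound}: there is $r_1=r_1(C,n)>0$ such that $\mathrm dF$ is nonsingular on $(-r_1,r_1)\times\Sigma$. Along the normal geodesic $s\mapsto\exp_p(s\nu_\Sigma(p))$ the differential of $F$ in a direction $v\in T_p\Sigma$ is the Jacobi field with $J(0)=v$ and $J'(0)=-S_{\nu_\Sigma(p)}v$, where $S$ is the shape operator, while $\partial_t$ maps to the unit geodesic velocity, orthogonal to the rest by the Gauss lemma. Since $\|S\|\le C$ and $|\mathrm{Riem}|\le C$, a Riccati/Rauch comparison keeps these Jacobi fields linearly independent, so $F$ is an immersion, for $|s|<r_1$ with $r_1$ depending only on $C$ and $n$.

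The core of the argument is an \emph{interior/exterior rolling ball estimate}: there is $\sigma_0=\sigma_0(C,n,\varepsilon_0)\in(0,\min\{r_1,\varepsilon_0\}]$ such that for every $p\in\Sigma$ and every $\sigma\in(0,\sigma_0]$,
\[
\overline{B_\sigma(\exp_p(\sigma\nu_\Sigma(p)))}\subseteq E,\qquad \overline{B_\sigma(\exp_p(-\sigma\nu_\Sigma(p)))}\subseteq M\setminus\mathrm{int}(E),
\]
and in both cases the closed ball meets $\Sigma$ only at $p$. This is proved by a continuity argument in $\sigma$: the claim is trivial as $\sigma\to0^+$, it is closed in $\sigma$ (here one uses that $E$ is closed), and it is open because if the enlarging ball first failed to be contained in $E$ at some radius $\sigma'\le\sigma_0$, then — using \eqref{eqn:2SidedWell} (since $\sigma'<\varepsilon_0$) to keep the center on the correct side of $\Sigma$ and the connectedness of the ball — its boundary sphere would be internally tangent to $\Sigma$ at a first contact point $q$; but there the geodesic sphere has curvature $1/\sigma'$, whereas $|\mathrm{II}_\Sigma(q)|\le C$, so once $\sigma'$ is smaller than a constant depending only on $C,n$ (absorbing the ambient-curvature distortion of small geodesic spheres via \eqref{eqn:HypothesesOnSffSec}) the hypersurface $\Sigma$ is strictly flatter than the sphere near $q$ and must lie outside the ball, contradicting that $\sigma'$ is a failure radius. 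This step, where the second fundamental form bound, the ambient curvature bound, and the two-sided condition \eqref{eqn:2SidedWell} all enter essentially, is the main obstacle; it is the quantitative incarnation of the comparison arguments of \cite{HeintzeKarcher} and \cite{PetruninMO}.

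From the rolling ball estimate the conclusion follows quickly, with $r_0:=\sigma_0$. Since $\overline{B_{|s|}(\exp_p(s\nu_\Sigma(p)))}$ meets $\Sigma$ only at $p$, one gets $\dist(\exp_p(s\nu_\Sigma(p)),\Sigma)=|s|$ for $|s|<r_0$; and if $x$ has two nearest points $p,p'\in\Sigma$ at distance $\rho<r_0$ then, by first variation and \eqref{eqn:2SidedWell}, $x=\exp_p(\pm\rho\nu_\Sigma(p))=\exp_{p'}(\pm\rho\nu_\Sigma(p'))$ with the same sign, so the interior (resp. exterior) rolling balls at $p$ and at $p'$ both coincide with $\overline{B_\rho(x)}$ and meet $\Sigma$ only in $\{p\}$, resp. $\{p'\}$, forcing $p=p'$. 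Hence $F(t_1,p_1)=F(t_2,p_2)$ gives $|t_1|=|t_2|=\dist$ of the common value to $\Sigma$, the signs of $t_1,t_2$ agree (determined by membership in $E$), and uniqueness of the nearest point yields $p_1=p_2$; thus $F$ is injective on $(-r_0,r_0)\times\Sigma$. Being an injective immersion between $n$-manifolds, $F$ is a local homeomorphism, hence an open map, hence a homeomorphism from the open set $(-r_0,r_0)\times\Sigma$ onto its image, which is therefore open in $M$; together with smoothness this makes $F$ a diffeomorphism with its image, with $r_0=r_0(C,n,\varepsilon_0)$ as required.
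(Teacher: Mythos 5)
Your overall strategy (normal exponential map is an immersion for small $|t|$ by a focal radius bound, then argue injectivity) is sound and shares Step~1 with the paper, but the key ``rolling ball'' step contains a genuine gap. At the first failure radius $\sigma'$ the ball $\overline{B_{\sigma'}(\exp_p(\sigma'\nu))}$ touches $\Sigma$ at some $q\neq p$, and you claim that a second fundamental form comparison at $q$ (ball of curvature $\approx 1/\sigma'$ vs.\ $|\sff_\Sigma|\le C$) yields a contradiction once $\sigma'$ is small depending only on $C,n$. This is false: the comparison only says $\Sigma$ lies (weakly) outside the ball near $q$, which is exactly the failure configuration, not a contradiction. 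More to the point, the curvature bounds control only the \emph{focal} obstruction; they cannot prevent a \emph{cut} obstruction where a geometrically distant sheet of $\Sigma$ comes close. Concretely, take $E=\{\eps\le|x|\le 1\}\subset\R^2$ with $\eps$ close to $1$: here $C\approx 1/\eps\approx 1$, but the rolling ball from the inner circle first touches the outer circle at $\sigma'=(1-\eps)/2$, which can be made arbitrarily small while $C$ stays bounded. So no threshold ``depending only on $C,n$'' exists, and your continuity argument's ``open'' step breaks down exactly there. (The example also shows the conclusion would be false without the $\varepsilon_0$-dependence, since there $\varepsilon_0\le 1-\eps$ and the failure occurs at $\varepsilon_0/2$.)

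To close the gap one must handle the cut-type touch explicitly, and this is where $\varepsilon_0$ enters essentially. If the contact point $q$ is \emph{antipodal}, i.e.\ $q=\exp_p(2\sigma'\nu_\Sigma(p))$ with $\nu_\Sigma(q)=-\gamma_p'(2\sigma')$, then extending $\gamma_p$ slightly past $q$ lands in $M\setminus E$ by \eqref{eqn:2SidedWell} applied at $q$, contradicting \eqref{eqn:2SidedWell} applied at $p$ provided $2\sigma'<\varepsilon_0$; this forces $r_0\lesssim\varepsilon_0/2$, not merely $r_0\le\varepsilon_0$. If the contact is \emph{non-antipodal}, the two normal geodesics from $p$ and $q$ to the center form an angle $<\pi$ at the center, and a first-variation ``shortcut'' produces a nearby point strictly closer than $\sigma'$ to two distinct sheets of $\Sigma$, contradicting minimality of $\sigma'$. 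This case analysis (opposite velocities $\Rightarrow$ use \eqref{eqn:2SidedWell} on both sides; otherwise shortcut) is precisely the paper's argument, and the curvature bounds are needed only to ensure $F$ is a local diffeomorphism — your Step~1. Once that is fixed, your concluding deduction (distance realization, uniqueness of nearest point, injective immersion of equal dimension $\Rightarrow$ diffeomorphism onto open image) is fine.
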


\begin{proof}
Let us take $\mathcal{C}$ a constant chosen small enough with respect to $C$, and $n$. We claim that the choice $r_0:=\min\{\frac{\varepsilon_0}{4},\mathcal{C}\}$ works. First, $\mathcal{C}$ can be chosen small enough so that, since $r_0<\mathcal{C}$, by using \eqref{eqn:HypothesesOnSffSec} and classical estimates on Jacobi fields, one gets that 
\[
\mathrm{d}F_{(t,p)}(v) \neq 0 \quad \text{for every $(t,p)\in (-r_0,r_0)\times\Sigma$, and every $0\neq v \in T_{(t,p)}\left((-r_0,r_0)\times\Sigma\right)$}\, .
\]

Thus for every $(t,p)\in (-r_0,r_0)\times\Sigma$ we have that $\mathrm{d}F_{(t,p)}$ is a linear isomorphism, and then $F$ is a local diffeomorphism around every point $(t,p)\in (-r_0,r_0)\times\Sigma$. In order to prove \eqref{eqn:SoughtDiffeomorphism} it is left to prove that $F$ is injective. 

Suppose by contradiction $F$ is not injective, and consider 
\begin{equation}\label{eqn:MinimalityOfL}
\ell:=\inf\{0<\vartheta\leq r_0: F:(-\vartheta,\vartheta)\times\Sigma\to M \quad \text{is not injective}\}>0\, .
\end{equation}
Hence there are $p,q\in\Sigma$ such that setting $\gamma_p(t):=\exp_p(t\nu_\Sigma(p))$, and $\gamma_q(t):=\exp_q(t\nu_\Sigma(q))$ we have
\[
x=\gamma_p(\ell)=\gamma_q(\ell)\, .
\]
We claim that $\gamma_p'(\ell)=-\gamma'_q(\ell)$. Indeed, if not, by taking a geodesic starting from $x$ and forming acute angles with $\gamma_p,\gamma_q$ at $\ell$, one would get that there is a point $z$ such that $d(z,p)<\ell$, and $d(z,q)<\ell$. Thus, by possibly taking $z$ closer to $x$, and by exploiting the minimality of $\ell$, $z$ can be chosen so that $z=\exp_{p'}(\ell_p\nu_\Sigma(p'))=\exp_{q'}(\ell_q\nu_\Sigma(q'))$ for $p',q'\in \Sigma$, with $p'\neq q'$, and $\ell_p,\ell_q<\ell$. This last property would contradict the minimality of $\ell$ in \eqref{eqn:MinimalityOfL}.

Hence, since $\gamma'_p(\ell)=-\gamma'_q(\ell)$, we get that $q=\exp_p(2\ell\nu_\Sigma(p))$. Thus, by the second property in \eqref{eqn:2SidedWell} applied at $q$, we have that 
\[
\exp_p((2\ell+\varepsilon_0/3)\nu_\Sigma(p))\in M\setminus E\, ,
\]
which is a contradiction with the first of \eqref{eqn:2SidedWell} applied at $p$ since $2\ell+\varepsilon_0/3 \leq 2r_0+\varepsilon_0/3<\varepsilon_0$.   
\end{proof}

\bigskip
\noindent\textbf{Data availability statement.} The present paper has no associated data.

\medskip
\noindent\textbf{Conflict of interest.} The authors declare no conflict of interest.

\medskip
\noindent\textbf{Consent.} The authors agreed with the content of the paper and they gave consent to the present submission.

\printbibliography
\end{document}